\numberwithin{equation}{section}
\def\dsum{\displaystyle\sum}
\def\dint{\displaystyle\int}
\def\dsup{\displaystyle\sup}
\def\dlim{\displaystyle\lim}
\def\r{\right}
\def\lf{\left}
\def\pat{\partial}
\def\ls{\lesssim}
\def\fz{\infty}
\def\fz{\infty}
\def\az{\alpha}
\def\supp{{\mathop\mathrm{\,supp\,}}}
\def\loc{{\mathop\mathrm{\,loc\,}}}
\def\bmo{{\mathop\mathrm{BMO}}}
\def\dist{{\mathop\mathrm{dist}}}
\def\mol{{\mathop\mathrm{mol}}}
\def\riesz{{\mathop\mathrm{Riesz}}}
\def\lz{\lambda}
\def\bdz{\Delta}
\def\ez{\epsilon}
\def\bz{\beta}
\def\gz{{\gamma}}
\def\bgz{{\Gamma}}
\def\vz{\varphi}
\def\tz{\theta}
\def\sz{\sigma}
\def\wz{\widetilde}
\def\ls{\lesssim}
\def\boz{\Omega}
\def\pat{\partial}
\def\noz{\nonumber}
\def\rr{{\mathbb R}}
\def\rn{{{\rr}^n}}
\def\zz{{\mathbb Z}}
\def\nn{{\mathbb N}}
\def\zz{{\mathbb Z}}
\def\nn{{\mathbb N}}
\def\cc{{\mathbb C}}
\def\cl{{\mathcal L}}
\numberwithin{equation}{section}
\def\hs{\hspace{0.3cm}}
\begin{document}
\arraycolsep=1pt

\Year{2012} %
\Month{January}
\Vol{55} %
\No{1} %
\BeginPage{1} %
\EndPage{XX} %
\AuthorMark{Cao J {\it et al.}} \ReceivedDay{October 17, 2011}
\AcceptedDay{February 20, 2012} \PublishedOnlineDay{; published
online March 22, 2012}
\DOI{10.1007/s11425-000-0000-0} 

\title{Hardy spaces $H_L^p({\mathbb R}^n)$ associated with
operators satisfying $k$-Davies-Gaffney estimates}{}


\author{CAO Jun}{}
\author{YANG DaChun}{Corresponding author}{}

\address{School of Mathematical Sciences, Beijing Normal University,
Laboratory of Mathematics}
\address{and Complex Systems, Ministry of
Education, Beijing {\rm 100875}, People's Republic of China}

\Emails{ caojun1860@mail.bnu.edu.cn, dcyang@bnu.edu.cn}

\maketitle


 {\begin{center}
\parbox{14.5cm}{\begin{abstract}
Let $L$ be a one-to-one operator of type $\omega$ having a bounded
$H_\infty$ functional calculus and satisfying the $k$-Davies-Gaffney
estimates with $k\in{\mathbb N}$. In this paper, the authors
introduce the Hardy space $H_L^p(\mathbb{R}^n)$ with $p\in (0,\,1]$
associated with $L$ in terms of square functions defined via
$\{e^{-t^{2k}L}\}_{t>0}$ and establish their molecular and
generalized square function characterizations. Typical examples of
such operators include the $2k$-order divergence form homogeneous
elliptic operator $L_1$ with complex bounded measurable coefficients
and the $2k$-order Schr\"odinger type operator $L_2:=
(-\Delta)^k+V^k$, where $\Delta$ is the Laplacian and $0\le V\in
L^k_{\mathop\mathrm{loc}}(\mathbb{R}^n)$. Moreover, as an
application, for $i\in\{1,\,2\}$, the authors prove that the
associated Riesz transform $\nabla^k(L_i^{-1/2})$ is bounded from
$H_{L_i}^p(\mathbb{R}^n)$ to $H^p(\mathbb{R}^n)$ for
$p\in(n/(n+k),\,1]$ and establish the Riesz transform
characterizations of $H_{L_1}^p(\mathbb{R}^n)$ for $
p\in(rn/(n+kr),\,1]$ if $\{e^{-tL_1}\}_{t>0}$ satisfies the
$L^r-L^2$ $k$-off-diagonal estimates with $r\in (1,2]$. These
results when $k:=1$ and $L:= L_1$ are known.
\end{abstract}}\end{center}}

 \keywords{Hardy space, Hardy-Sobolev space,
$k$-Davies-Gaffney estimate, Schr\"odinger type operator, higher
order elliptic operator, semigroup, square function, higher order
Riesz transform, molecule}

 \MSC{42B35, 42B30, 42B25, 35J10, 42B37}

\renewcommand{\baselinestretch}{1.2}
\begin{center} \renewcommand{\arraystretch}{1.5}
{\begin{tabular}{lp{0.8\textwidth}} \hline \scriptsize {\bf
Citation:}\!\!\!\!&\scriptsize Cao J, Yang D C. Hardy spaces
$H_L^p({\mathbb R}^n)$ associated with operators satisfying
$k$-Davies-Gaffney estimates. Sci China Math, 2012, 55, doi:
10.1007/s11425-000-0000-0\vspace{1mm}
\\
\hline
\end{tabular}}\end{center}

\baselineskip 11pt\parindent=10.8pt  \wuhao

\section{Introduction}\label{s1}

The Hardy spaces, as a suitable substitute of
Lebesgue spaces $L^p(\rn)$, play an important role in various fields
of analysis and partial differential equations. It is well known
that the Hardy spaces $H^p(\rn)$ are essentially related to the
Laplacian operator $\bdz:=\sum_{j=1}^{n}\frac{\pat^2}{\pat x_j^2}$,
which have been intensively studied; see, for example,
\cite{sw,fs,cw77,tw,s,gr1} and the references therein.

In recent years, the study of Hardy spaces associated with different
differential operators inspires great interests; see, for example,
\cite{adm2,amr08,ar,cly,cds99,cks92,cks93,
dy05-1,dy05-2,dxy,dz1,dz2,gly10,gly,hlmmy,hm09,hm09c,hmm,lbyz10,lyy11,
jyz09,yy12} and their references. In particular, in \cite{adm2},
when the operator $L$ satisfies a pointwise Poisson upper bound,
Auscher, McIntosh and Duong introduced the Hardy space $H_L^1(\rn)$
associated with $L$ in terms of area integral functions. Later, in
\cite{dy05-1,dy05-2}, Duong and Yan introduced the $\bmo$-type space
$\bmo_L(\rn)$ associated with such an $L$ and proved the dual space
of $H_L^1(\rn)$ is $\bmo_{L^\ast}(\rn)$, where $L^\ast$ denotes the
adjoint operator of $L$ in $L^2(\rn)$. Yan \cite{y08} further
generalized these results to the Hardy space $H^p_L(\rn)$ with $p\in
(0,1]$ close to $1$ and its dual space. Also, the Orlicz-Hardy space
and its dual space associated with such an $L$ were studied in
\cite{jyz09,jya}.

Auscher and Russ \cite{ar} studied the Hardy space $H^1_L$ on
strongly Lipschitz domains associated with a second order divergence
form elliptic operator $L$ whose heat kernels have the Gaussian
upper bounds and regularity. Very recently, Auscher, McIntosh and
Russ \cite{amr08} treated the Hardy space $H^1$ associated with the
Hodge Laplacian on a Riemannian manifold with doubling measure;
Hofmann--Mayboroda in \cite{hm09,hm09c} and
Hofmann--Mayboroda--McIntosh in \cite{hmm} introduced the Hardy and
Sobolev spaces associated with a second order divergence form
elliptic operator $L$ on $\rn$ with bounded measurable complex
coefficients and these operators may not have the pointwise heat
kernel bounds, while a theory of the Orlicz-Hardy space and its dual
space associated with $L$ was independently developed in \cite{jy10,
jy10a}.

Moreover, a theory of Hardy spaces associated with the
Schr\"odinger operators $-\bdz+V$ was well developed, where the
nonnegative potential $V$ satisfies the reverse H\"older
inequality (see, for example, Dziuba\'nski and Zienkiewicz
\cite{dz1,dz2} and Yang and Zhou \cite{yz} and their references).
More generally, for nonnegative self-adjoint operators $L$
satisfying the Davies-Gaffney estimates, Hofmann et al.
\cite{hlmmy} introduced a new Hardy space $H_L^1(\rn)$, which was
extended to the Orlicz-Hardy space by Jiang and Yang \cite{jy}.
Recently, the Hardy space $H_{(-\bdz)^2+V^2}^1(\rn)$ associated
with the Schr\"odinger type operators $(-\bdz)^2+V^2$ was also
studied in \cite{cly}.

From now on, in what follows of this paper, we {\it always let} $L$
be a one-to-one operator of type $\omega$ having a bounded
$H_\infty$ functional calculus and satisfying the $k$-Davies-Gaffney
estimates with $k\in{\mathbb N}$ (see \eqref{2.6} below). Motivated
by \cite{hmm,hlmmy}, in this paper, we introduce the Hardy space
$H_L^p(\mathbb{R}^n)$ with $p\in (0,\,1]$ associated with $L$ in
terms of the square function defined via $\{e^{-t^{2k}L}\}_{t>0}$
(see \eqref{4.1} below) and establish their molecular and
generalized square function characterizations. Typical examples of
such operators include the {\it $2k$-order divergence form
homogeneous elliptic operator $L_1$ with complex bounded measurable
coefficients} and the {\it $2k$-order Schr\"odinger type operator}
$L_2:= (-\Delta)^k+V^k$, where $\Delta$ is the Laplacian and $0\le
V\in L^k_{\mathop\mathrm{loc}}(\mathbb{R}^n)$. Moreover, as an
application, for $i\in\{1,\,2\}$, we prove that the associated Riesz
transform $\nabla^k(L_i^{-1/2})$ is bounded from
$H_{L_i}^p(\mathbb{R}^n)$ to $H^p(\mathbb{R}^n)$ for
$p\in(n/(n+k),\,1]$ and establish the Riesz transform
characterizations of $H_{L_1}^p(\mathbb{R}^n)$ for $
p\in(rn/(n+kr),\,1]$ if $\{e^{-tL_1}\}_{t>0}$ satisfies the
$L^r-L^2$ $k$-off-diagonal estimates with $r\in (1,2]$ (see
Definition \ref{d6.1} below for the definition). These results when
$k:=1$ and $L:= L_1$ were already obtained recently by
Hofmann-Mayboroda \cite{hm09,hm09c}, Jiang-Yang \cite{jy10,jy}, and
Hofmann-Mayboroda-McIntosh \cite{hmm}.

A new ingredient appearing in this paper is the introduction of the
$k$-Davies-Gaffney estimates with $k\in{\mathbb N}$, which is
naturally satisfied by $2k$-order Schr\"odinger operators
$(-\Delta)^k+V^k$. Via the perturbation technique (see, for example,
\cite{bk1, bk2}) and some ideas from the proof of \cite[Lemma
2]{da2}, and using the elliptic condition, we further show that the
semigroup $\{e^{-tL_1}\}_{t>0}$ also satisfies the
$k$-Davies-Gaffney estimates.

Another new observation of this paper is that the nonnegative
self-adjoint property of operators in \cite{hlmmy,jy} can be
weakened into the assumption that $L$ has a bounded $H_\fz$
functional calculus. We point out that when this manuscript was in
preparation, we learned from  Anh and Li \cite{al} that this was
also observed by Duong and Li \cite{dl}.

This paper is organized as follows. In Section \ref{s2}, we first
recall some results on the $H_\fz$ functional calculus and
describe some assumptions on operators considered in this paper.
In particular, we introduce the notion of $k$-Davies-Gaffney
estimates with $k\in\nn$ in \eqref{2.6} below. Some examples
satisfying these assumptions are also given in this section.

Let $L$ be an operator satisfying assumptions in Section \ref{s2}.
In Section \ref{s3}, using some ideas from
\cite{hlmmy,hm09,hm09c,hmm}, we establish some off-diagonal
estimates for some families of operators related to $L$. More
precisely, we show that if $\{e^{-tL}\}_{t>0}$ satisfies the
$k$-Davies-Gaffney estimates, then the family
$\{(zL)^me^{-zL}\}_{z\in S_{\ell(\pi/2-\omega)}^0}$ of operators for
any $m\in\nn\cup\{0\}$ also satisfies the $k$-Davies-Gaffney
estimates in $z$ (see Lemma \ref{l3.1}), the $k$-Davies-Gaffney
estimates are stable under compositions (see Lemma \ref{l3.2}) and
the family $\{\psi(tL)f(L)\}_{t>0}$ of operators satisfies the
$k$-Davies-Gaffney estimates of order $\sz$ (see \eqref{3.7} below
for the definition), where $\psi$ belongs to the decaying function
class $\Psi_{\sz,\tau}(S_\mu^0)$ as in \eqref{2.2} below (see Lemma
\ref{l3.3} below). Let $L_1$ be the $2k$-order divergence form
homogeneous elliptic operator with complex bounded measurable
coefficients and $L_2$ the $2k$-order Schr\"odinger type operator.
In this section, we also prove that the semigroup
$\{e^{-tL_1}\}_{t>0}$ and the family
$\{\sqrt{t}\nabla^ke^{-tL_i}\}_{t>0}$ of operators for $i\in\{1,2\}$
satisfy the $k$-Davies-Gaffney estimates, respectively, in
Propositions \ref{p3.1} and \ref{p3.2}.

In Definition \ref{4.1} of Section \ref{s4}, we first introduce the
Hardy space $H_L^p(\rn)$ for $p\in(0,\,1]$ in terms of the square
function $S_L$ defined via $\{e^{-t^{2k}L}\}_{t>0}$ and, in
Definition \ref{4.2}, the molecular Hardy space
$H_{L,\,\mol,\,M}^p(\rn)$ with $M\in (n(1/p-1/2)/(2k),\fz)$. Then,
by using Lemma \ref{l3.1}, we prove that for each
$(H_L^p,\,\ez,\,M)$-molecule $m$, $\|S_L(m)\|_{L^p(\rn)}$ is
uniformly bounded (see \eqref{4.4} below), which together with a
boundedness criteria from \cite{hmm} (see also Lemma \ref{4.1}
below) implies that $H_{L,\,\mol,\,M}^p(\rn)\subset H_L^p(\rn)$. On
the other hand, using the atomic decomposition of the tent space
$T^p(\mathbb{R}^{n+1}_{+})$ and the $k$-Davies-Gaffney estimate, we
obtain that the operator $\pi_{M,\,L}$ in \eqref{4.11} maps any
$T^p(\mathbb{R}^{n+1}_+)$-atom into an $(H_L^p,\,\ez,\,M)$-molecule
up to a harmless positive constant multiple in Lemma \ref{l4.2}
below. Then, by a Calder\'on reproducing formula, we establish a
molecular decomposition of $H_L^p(\rn)$ which yields another
inclusion $H_L^p(\rn)\subset H_{L,\,\mol,\,M}^p(\rn)$. Thus, we
obtain the molecular characterization of $H_L^p(\rn)$ in Theorem
\ref{t4.1} below.

Section \ref{s5} is devoted to the generalized square function
characterization of $H_L^p(\rn)$. Motivated by \cite{hmm}, we first
introduce the generalized square function Hardy space
$H_{\psi,L}^p(\rn)$ for $p\in(0,1]$ and some
$\psi\in\Psi_{\sz,\tau}(S_\mu^0)$ in Definition \ref{d5.1} below.
Then, for any $\psi\in\Psi_{\sz,\tau}(S_\mu^0)$ and all $f\in
H_\fz(S_\mu^0)$ (see \eqref{2.1} for the definition), we introduce
the operators $Q_{\psi,L}$, $\pi_{\psi,L}$ and their composition
$Q^{f}$ (see \eqref{5.1}, \eqref{5.4} and \eqref{5.5} for their
definitions). Using the $k$-Davies-Gaffney estimates of order $\sz$
for $\{\psi(tL)f(L)\}_{t>0}$ in Lemma \ref{l3.3} below, we prove
that the operator $Q^f$ is bounded on the tent space
$T^p(\mathbb{R}^{n+1}_{+})$ (see Lemma \ref{l5.2}), $Q_{\psi,L}$ is
bounded from $H_L^p(\rn)$ to $T^p(\mathbb{R}^{n+1}_{+})$ and
$\pi_{\psi,L}$ is bounded from $T^p(\mathbb{R}^{n+1}_{+})$ to
$H_L^p(\rn)$ for some $\psi$ (see Lemma \ref{l5.3} below). Combining
these boundedness and using a Calder\'on reproducing formula in
\eqref{5.14}, we then obtain the generalized square function
characterization of $H_L^p(\rn)$ in Theorem \ref{t5.1}, which is
used in obtaining the Riesz transform characterization of
$H_{L_1}^p(\rn)$ in Section \ref{s6}. For all $\az\in(0,\,\fz)$, let
$L^\az$ be the fractional power with exponent $\az$ of $L$ and the
Hardy space $H_{L^\az}^p(\rn)$ be defined as in \eqref{5.3} below
via the square function $S_{L^\az}$ as in \eqref{5.2}. As another
application of Theorem \ref{t5.1}, we then obtain in Corollary
\ref{c5.1} that $H_{L^\az}^p(\rn)=H_L^p(\rn)$ with equivalent norms,
in particular, $H_{(-\bdz)^k}^p(\rn)=H^p(\rn)$ with equivalent norms
for all $k\in\nn$, where $H^p(\rn)$ is the classical Hardy space in
\cite{sw,fs}.

Finally, in Section \ref{s6}, we concentrate on the behavior of
the Riesz transforms $\nabla^k L_i^{-1/2}$ on $H^p_{L_i}(\rn)$ for
$i\in\{1,\,2\}$. By the gradient estimates of the semigroup
$\{e^{-tL_i}\}_{t>0}$ in Proposition \ref{p3.2} and the
composition rule of $k$-Davies-Gaffney estimates in Lemma
\ref{l3.2}, we first show that the two families of operators,
$\{\nabla^kL_i^{-1/2}(I-e^{-tL_i})^M\}_{t>0}$ and
$\{\nabla^kL_i^{-1/2}(tL_ie^{-tL_i})^M\}_{t>0}$ for all $M\in\nn$,
satisfy some estimates similar to the  $k$-Davies-Gaffney
estimates of order $M$ (see Lemma \ref{l6.1} below). Then, using
these estimates, we prove that for each
$(H_{L_i}^p,\,\ez,\,M)$-molecule $m$ with $p\in(n/(n+k),\,1]$ and
$M\in(n(1/p-1/2)/(2k),\,\fz)$, $\nabla^k(L_i^{-1/2})(m)$ is a
classical $H^p(\rn)$-molecule up to a harmless constant multiple,
which further implies that Riesz transforms $\nabla^k(L_i^{-1/2})$
are bounded from $H_{L_i}^p(\rn)$ to the classical Hardy space
$H^p(\rn)$ in Theorem \ref{t6.1} below. In the remainder of this
section, motivated by \cite{hmm}, by assuming that the semigroup
$\{e^{-tL_1}\}_{t>0}$ satisfies the $L^r-L^2$ $k$-off-diagonal
estimates for $r\in (1, 2]$, we then establish the Riesz transform
characterization of $H_{L_1}^p(\rn)$. To this end, we first show
in Lemma \ref{l6.2} below that $\{tL_1e^{-tL_1}\}_{t>0}$ also
satisfies the $L^r-L^2$ $k$-off-diagonal estimates. We then recall
some known results concerning the homogeneous Triebel-Lizorkin
space $\dot F^\az_{p,q}(\rn)$ and their atomic characterizations
from \cite{tr,hpw,ck} and \cite[Proposition 4.3]{wz}.  Let $\dot
W^{k,2}(\rn)$ be the homogenous Sobolev space of order $k$. With
the help of these results, we show that if $f\in \dot
W^{k,2}(\rn)\cap\dot H^{k,p}(\rn)$ when $p\in (0,\,1]$, then its
atomic decomposition converges in both $\dot W^{k,2}(\rn)$ and
$\dot H^{k,p}(\rn)$ (see Lemma \ref{l6.3} below). Moreover, by the
$L^r-L^2$ $k$-off-diagonal estimates for
$\{tL_1e^{-tL_1}\}_{t>0}$, we prove that for each
$H^{k,p}(\rn)$-atom $b$, $S_1\sqrt{{L_1}}(b)$ is uniformly bounded
on $L^p(\rn)$ (see \eqref{6.12} below), which, together with the
generalized square function characterization of $H_{{L_1}}^p(\rn)$
in Theorem \ref{t5.1} and Lemma \ref{l6.3}, shows that
$S_1\sqrt{L_1}$ is bounded from the Hardy-Sobolev space $\dot
H^{k,p}(\rn)$ to $L^p(\rn)$. This, combined with the boundedness
of Riesz transforms on $H^p_{L_1}(\rn)$ in Theorem \ref{t6.1},
yields the Riesz transform characterization of $H_{L_1}^p(\rn)$ in
Theorem \ref{t6.2} below. We point out that in the proof of the
estimate \eqref{6.12}, we use the embedding result \eqref{6.14}
below on the homogeneous Triebel-Lizorkin space from \cite{tr} and
another key fact from \cite[Theorem 1.1]{ahmt} that
$\|\sqrt{L_1}f\|_{L^2(\rn)}\ls\|\nabla^kf\|_{L^2(\rn)}$. The
latter fact may not be true for $L_2$; see Remark \ref{r6.1}
below. Thus, it seems that one needs some new ideas to obtain the
Riesz characterization of $H_{L_2}^p(\rn)$.

We now make some conventions on the notation. Throughout the whole
paper, we always let $\nn:=\{1,2,\cdots\}$ and $\zz_+:=
\nn\cup\{0\}$. Denote the {\it differential operator}
$\frac{\pat^{|\az|}}{\pat x_1^{\az_1}\cdots \pat x_1^{\az_1}}$
simply by $\pat^\az$, where $\az:=(\az_1,\cdots,\az_n)$ and
$|\az|:=\az_1+\cdots+\az_n$. We also denote the {\it $2k$-order
divergence form homogenous elliptic operator with complex bounded
measurable coefficients}
$(-1)^k\sum_{|\alpha|=|\beta|=k}\partial^\alpha (a_{\alpha,\beta}
\partial^\beta)$
by $L_1$ and the {\it $2k$-order Schr\"odinger type operator
$(-\bdz)^k+V^k$} by $L_2$. We use $C$ to denote a {\it positive
constant}, that is independent of the main parameters involved but
whose value may differ from line to line, and $C(\az,\cdots)$ to
denote a {\it positive constant} depending on the parameters
$\az,$ $\cdots$. {\it Constants with subscripts}, such as $C_0$,
do not change in different occurrences. If $f\le Cg$, we then
write $f\ls g$; and if $f\ls g\ls f$, we then write $f\sim g$. For
all $x\in\rr^n$ and $r\in(0,\fz),$ let
$B(x,r):=\{y\in\rr^n:|x-y|<r\}$. Also, for any set $E\in \rn$, we
use $E^\complement$ to denote $\rn\setminus E$ and $\chi_E$ its
{\it characteristic function}.

\section{Preliminaries}\label{s2}

We first collect some basic results on the theory of $H_\fz$
functional calculus, developed by McIntosh in \cite{mc}, that we
need in what follows. For more details and further references
about functional calculus, we refer the reader to
\cite{adm1,ha,mc} and the references therein.

For $\tz\in[0,\,\pi)$, the {\it open and closed sectors, $S_\tz^0$
and $S_\tz$, of angle $\tz$} in the complex plane $\cc$ are defined
as follows:
$$S_\tz^0:=\lf\{z\in\cc\setminus\{0\}:\ |\arg z|<\tz\r\}$$
and
$$S_\tz:=\lf\{z\in\cc\setminus\{0\}:\ |\arg
z|\le\tz\r\}\cup\lf\{0\r\}.$$ Let $\omega\in[0,\,\pi)$. A closed
operator $T$ in $L^2(\rn)$ is called of {\it type} $\omega$, if
the spectrum of $T$, $\sz(T)$, is contained in $S_\omega$, and for
each $\tz\in (\omega,\,\pi)$, there exists a nonnegative constant
$C$ such that for all $z\in\cc\setminus S_\tz$,
$\|(T-zI)^{-1}\|_{\cl(L^2(\rn))}\le C|z|^{-1}$, here and in what
follows, $\|S\|_{\cl(\mathcal H)}$ denotes the {\it operator norm}
of the linear operator $S$ on the normed linear space $\mathcal
H$.

For $\mu\in[0,\,\pi)$ and $\sz,\tau\in(0,\,\fz)$, we need the
following spaces of  functions:
$$H(S_\mu^0):= \lf\{f:\ f\ \text{is holomorphic on}\
S_\mu^0\r\},$$
\begin{eqnarray}\label{2.1}
H_{\fz}(S_\mu^0):= \lf\{f\in H(S_\mu^0):\
\|f\|_{L^{\fz}(S^0_{\mu})}<\fz\r\}
\end{eqnarray}
and
\begin{eqnarray}\label{2.2}
\Psi_{\sz,\tau}(S_\mu^0):=\lf\{f\in H(S_\mu^0):\ |f(\xi)|\le C
\inf\{|\xi|^\sz,\,|\xi|^{-\tau}\}\ \text{for all}\ \xi\in
S_\mu^0\r\}.
\end{eqnarray}

It is known that every one-to-one operator $T$ of type $\omega$ in
$L^2(\rn)$ has a unique holomorphic functional calculus which is
consistent with the usual definition of polynomials of operators
(see, for example, \cite{mc}). More precisely, let $T$ be a
one-to-one operator of type $\omega$, with $\omega\in[0,\,\pi)$,
$\mu\in(\omega,\,\pi)$, $\sz,\tau\in(0,\,\fz)$, and
$f\in\Psi_{\sz,\tau}(S_\mu^0)$. The function of the operator $T$,
$f(T)$ can be defined by the $H_\fz$ functional calculus in the
following way,
\begin{eqnarray}\label{2.3}
f(T):=\frac{1}{2\pi i}\dint_\gamma(\xi I-T)^{-1}f(\xi)\,d\xi,
\end{eqnarray}
where $\gamma:=\{re^{i\nu}:\ \fz>r>0\}\cup\{re^{-i\nu}:\ 0<r<\fz\}$,
$\nu\in(\omega,\,\mu)$, is a curve consisting of two rays
parameterized anti-clockwise. It is well known that the above
definition is independent of the choice of $\nu\in(\omega,\,\mu)$
and the integral in \eqref{2.3} is absolutely convergence in
$\cl(L^2(\rn))$ (see \cite{mc,ha}).

In what follows, we {\it always assume $\omega\in[0,\,\pi/2)$}.
Then, it follows from \cite[Proposition 7.1.1]{ha} that for every
operator $T$ of type $\omega$ in $L^2(\rn)$, $-T$ generates a
holomorphic $C_0$-semigroup $\{e^{-zL}\}_{z\in S^0_{\pi/2-\omega}}$
on the open sector $S^0_{\pi/2-\omega}$ such that
$\|e^{-zL}\|_{\cl(L^2(\rn))}\le1$ for all $z\in S^0_{\pi/2-\omega}$
and, moreover, every nonnegative self-adjoint operator is of type
$0$.

Let $\Psi(S_\mu^0):=\cup_{\sz,\tau>0}\Psi_{\sz,\tau}(S_\mu^0)$. By
the relationship between the associated semigroup and the resolvent
of $T$, for all $f\in\Psi(S_\mu^0)$, $f(T)$ can further be
represented as
\begin{eqnarray}\label{2.4}
f(T)=\dint_{\Gamma_+} e^{-zT}\eta_+(z)\,dz+\dint_{\Gamma_{-}}
e^{-zT}\eta_-(z)\,dz,
\end{eqnarray}
where
\begin{eqnarray}\label{2.5}
\eta_\pm(z):=\frac{1}{2\pi i}\dint_{\gamma\pm}e^{\xi z}f(\xi)\,
d\xi, \ \ \ z\in\Gamma_{\pm},
\end{eqnarray}
$\Gamma_{\pm}:=\rr^+e^{\pm i(\pi/2-\tz)}$, $\gamma_{\pm}:=
\rr^+e^{\pm i\nu}$ and $0\le\omega<\tz<\nu<\mu<\pi/2$. Here and in
what follows, $\rr^+:=(0,\fz)$.

It is well known that the above holomorphic functional calculus
defined on $\Psi(S_\mu^0)$ can be extended to $H_\fz(S_\mu^0)$ via a
limit process (see \cite{mc}).  Recall that for $\mu\in(0,\,\pi)$,
the operator $T$ is said to {\it have a bounded $H_\fz(S_\mu^0)$
functional calculus} in the Hilbert space $\mathcal{H}$, if there
exists a positive constant $C$ such that for all $\psi\in
H_\fz(S_\mu^0)$, $\|\psi(T)\|_{\cl(\mathcal{H})}\le
C\|\psi\|_{L^\fz(S_\mu^0)}$ and $T$ is called to have {\it a bounded
$H_\fz$ functional calculus} in the Hilbert space $\mathcal{H}$ if
there exists $\mu\in (0,\,\pi)$ such that $T$ has a bounded
$H_\fz(S_\mu^0)$ functional calculus.

Now, we describe our assumptions of operators $L$ considered in this
paper. Throughout the whole paper, we {\it always assume} that $L$
satisfies the following {\it assumptions}:
\begin{enumerate}
\item[(A$_1$)]
 The operator $L$ is a one-to-one operator of
type $\omega$ in $L^2(\rn)$ with $\omega\in[0,\,\pi/2)$;
\item[(A$_2$)] The operator $L$ has a bounded $H_\fz$
functional calculus in $L^2(\rn)$;
\item[(A$_3$)] Let $k\in \nn$. The operator $L$
generates a holomorphic semigroup $\{e^{-tL}\}_{t>0}$ which
satisfies the {\it $k$-Davies-Gaffney estimate}, namely, there exist
positive constants $\wz C$ and $C_1$ such that for all closed sets
$E$ and $F$ in $\rn$, $t\in(0,\,\fz)$ and $f\in L^2(\rn)$ supported
in $E$,
\begin{eqnarray}\label{2.6}
\|e^{-tL}f\|_{L^2(F)}\le \wz C
\exp\lf\{-\frac{\lf[\dist(E,\,F)\r]^{2k/(2k-1)}}{C_1t^{1/(2k-1)}}\r\}\|f\|_{L^2(E)},
\end{eqnarray}
here and in what follows, $\dist(E,\,F):= \inf_{x\in E,\,y\in
F}|x-y|$ is the distance between $E$ and $F$.
\end{enumerate}

\begin{remark} \label{r2.1}
We point out that when $k=1$, the $k$-Davies-Gaffney estimate is
usually called the {\it Davies-Gaffney estimate} (or the {\it
$L^2$ off-diagonal estimate} or just the {\it Gaffney estimate})
(see, for example, \cite{hm09,hm09c,hlmmy,jy,hmm}).
\end{remark}

Let $k\in\nn$. Examples of operators, satisfying the above
assumptions (A$_1$), (A$_2$) and (A$_3$), include the following
$2k$-order divergence form homogeneous elliptic operator:
$$L_1:=(-1)^k\dsum_{|\az|=|\bz|=k}\pat^\az(a_{\az,\bz}\pat^\bz)$$
with complex bounded measurable coefficients $a_{\az,\bz}$ for all
multi-indices $\az,\,\bz$ and the $2k$-order Schr\"odinger type
operator $L_2:=(-\bdz)^k+V^k$ with $0\le V\in L^k_{\loc}(\rn)$. More
precisely, let $ W^{k,2}(\rn)$ be the {\it Sobolev space of order
$k$} endowed with the {\it norm}
$$\|\cdot\|_{W^{k,2}(\rn)}:=\sum_{0\le|\az|\le k}\|\pat^\az(\cdot)\|_{L^2(\rn)}.$$
Denote by $\mathfrak{a}$ the {\it sesquilinear form} given by
\begin{eqnarray}\label{2.7}
\mathfrak{a}(f,\,g):=\dint_\rn\dsum_{|\az|=|\bz|=k}a_{\az,\bz}(x)\pat^\bz
f(x)\overline{\pat^\az g(x)}\,dx
\end{eqnarray}
with domain $D(\mathfrak{a}):= W^{k,2}(\rn)$. We further assume that
$\mathfrak{a}$ satisfies the {\it ellipticity condition}, that is,
there exist positive constants $0<\lz\le\Lambda<\fz$ such that
\begin{eqnarray}\label{2.8}
\|a_{\az,\bz}\|_{L^\fz(\rn)}\le\Lambda\ \ \  \text{for all}\
\az,\,\bz\ \text{with}\ |\az|=k=|\bz|
\end{eqnarray}
and
\begin{eqnarray}\label{2.9}
\Re \mathfrak{a}(f,\,f)\ge \lz\,\|\nabla^kf\|_{L^2(\rn)}^2\ \ \
\text{for all} \ f\in W^{k,2}(\rn),
\end{eqnarray}
here and in what follows, $\Re z$ for any $z\in\cc$ denotes the
{\it real part} of $z$. The {\it 2k-order divergence form
homogeneous elliptic operator} $L_1$ with complex bounded
measurable coefficients
 is then defined to be the operator associated with the form $\mathfrak{a}$.

Let $\omega\in[0,\,\pi/2]$. Recall that an operator $T$ in the
Hilbert space $\mathcal{H}$ is called $m$-$\omega$-{\it accretive}
if
\begin{enumerate}
\item[(i)] the range of the operator $T+I$, $R(T+I)$, is dense in $\mathcal{H}$;
\item[(ii)] for all $u\in D(T)$, $|\arg(Tu,\,u)|\le\omega$,
\end{enumerate}
where $D(T)$ denotes the {\it domain} of $T$ and $\arg(Tu,\,u)$ the
{\it argument} of $(Tu,\,u)$. It is known by \cite[Proposition
7.1.1]{ha} that every closed $m$-$\omega$-accretive operator is of
type $\omega$ (see \cite[p.\,173]{ha}).

From \cite{ahmt}, it follows that $L_1$ is closed and maximal
accretive (see \cite[p.\,327]{ha} for the definition), which
further yields that $R(L_1+I)$ is dense in $L^2(\rn)$ (see, for
example \cite[Proposition C.7.2]{ha}). Moreover, by the
ellipticity condition \eqref{2.8} and \eqref{2.9}, we obtain that
for all $f\in W^{k,2}(\rn)$,
$$\lf|\tan\lf(\arg(L_1 f,\,f)\r)\r|=\lf|\frac{\Im(L_1 f,\,f)}{\Re(L_1 f,\,f)}\r|
\le\frac{\Lambda}{\lz},$$ here and in what follows, $\Im z$ for
any $z\in\cc$ denotes the {\it imaginary part} of $z$. Thus,
$|\arg(L_1 f,\,f)|\le\arctan\frac{\Lambda}{\lz}$, which, together
with the fact that $R(L_1+I)$ is dense in $L^2(\rn)$, shows that
$L_1$ is an $m$-$\arctan\frac{\Lambda}{\lz}$-accretive operator in
$L^2(\rn)$ with the angle $\arctan\frac{\Lambda}{\lz}\in
[\pi/4,\,\pi/2)$. Thus, $L_1$ is an operator of type
$\arctan\frac{\Lambda}{\lz}$.

Now, we show that $L_1$ is one-to-one. Let $N(L_1):= \{f\in D(L_1):\
L_1f=0\}$ be the {\it null space} of $L_1$. For any fixed $f\in
N(L_1)$, by the elliptic condition \eqref{2.8} and \eqref{2.9}, we
have
\begin{eqnarray}\label{2.10}
\dint_{\rn}\lf|\nabla^kf(x)\r|^2\,dx\sim|(L_1f,\,f)|=0,
\end{eqnarray}
which implies that $\nabla^kf=0$ almost everywhere in $\rn$. In
what follows, denote by $C_c^\fz(\rn)$ the {\it space of all
$C^\fz$ functions with compact support in $\rn$}. For all $f\in
W^{k,2}(\rn)$, by the density of $C_c^\fz(\rn)$ in $
W^{k,2}(\rn)$, there exists a sequence $\{f_j\}_{j\in\nn}$ of
functions in $C_c^\fz(\rn)$ such that $\lim_{j\to \fz}f_j=f$ in $
W^{k,2}(\rn)$. Denote the {\it Fourier transform} and the {\it
inverse Fourier transform} of $f$, respectively, by $\widehat f$
and $f^\vee$. If $f\in N(L_1)$, by \eqref{2.10}, the fact that
$f_j\in C_c^\fz(\rn)$, the multiplication formula of Fourier
transform and Plancherel's theorem (see, for example,
\cite[Theorem 2.2.14]{gr}), we have that for all $\vz\in
C_c^\fz(\rn)$,
\begin{eqnarray*}
0&&=(\nabla^kf,\,\widehat\vz)=\dlim_{j\to\fz}(\nabla^kf_j,\,
\widehat{\vz})=\dlim_{j\to\fz}(-1)^k(f_j,\,\nabla^k\widehat{\vz})=
\dlim_{j\to\fz}(-1)^k(\widehat{f_j},\,(\nabla^k\widehat{\vz})^\vee)\\
&&=\dlim_{j\to\fz}i^k(\widehat{f_j},\,(\cdot)^k\vz(\cdot))
=i^k(\widehat{f},\,(\cdot)^k\vz(\cdot)),
\end{eqnarray*}
which implies that $\supp{\widehat{f}}\subset\{0\}$. By \cite[Corollary
2.4.2]{gr}, we have that $f$ is a polynomial, which, together with
the fact that $f\in L^2(\rn)$, implies that $f=0$. Hence,
$N(L_1)=\{0\}$ and $L_1$ is one-to-one.

Since $L_1$ is maximal accretive, from \cite{adm1}, it follows
that $L_1$ has a bounded holomorphic functional calculus. Finally,
in Proposition \ref{p3.1} below, we show that the semigroup
$\{e^{-tL_1}\}_{t>0}$ satisfies the $k$-Davies-Gaffney estimate.
Thus, the {\it$2k$-order divergence form homogenous elliptic
operator $L_1$  with complex bounded measurable coefficients
satisfies the assumptions {\rm(A$_1$), (A$_2$)} and {\rm(A$_3$)}}.

Let $k\in\nn$, $\bdz:=\sum_{j=1}^{n}\frac{\pat^2}{\pat x_j^2}$ be
the Laplace operator and $0\le V\in L_{\loc}^k(\rn)$. The {\it
$2k$-order Schr\"odinger type operator} $L_2:=(-\bdz)^k+V^k$ is the
associated operator of the following sesquilinear form
\begin{eqnarray*}
\mathfrak{b}(f,\,g):=
\dint_\rn\nabla^kf(x)\overline{\nabla^kg(x)}\,dx+\dint_\rn
[V(x)]^kf(x)\overline{g(x)}\,dx
\end{eqnarray*}
with domain $D(\mathfrak{b}):= \{f\in W^{k,2}(\rn):\ \int_{\rn}
[V(x)]^k |f(x)|^2\,dx<\fz\}$ which is also dense in $L^2(\rn)$,
since $C_c^\fz(\rn)\subset D(\mathfrak{b})$.

It is easy to see that the $2k$-order Schr\"odinger type operator
$L_2$ is a nonnegative self-adjoint operator. From \cite{ha}, it
follows that $L_2$ is $m$-0-accretive. Thus, by \cite[Proposition
7.1.1]{ha}, $L_2$ is a one-to-one operator of type $0$. Therefore,
$L_2$ has a bounded $H_\fz$ functional calculus. Moreover, by
\cite{bd}, the semigroup $\{e^{-tL_2}\}_{t>0}$ satisfies a Gaussian
type estimate, that is, the integral kernel $e^{-tL_2}(x,\,y)$ of
$e^{-tL_2}$ has the property that there exist positive constants
$C_2$ and $C_3$ such that for all $t\in(0,\,\fz)$ and $x,\,y\in
\rn$,
\begin{eqnarray*}
|e^{-tL_2}(x,\,y)|\le
C_2t^{-n/(2k)}\exp\lf\{-C_3\frac{|x-y|^{2k/(2k-1)}}{t^{1/(2k-1)}}\r\},
\end{eqnarray*}
which implies that the semigroup $\{e^{-tL_2}\}_{t>0}$ satisfies the
$k$-Davies-Gaffney estimate immediately. Thus,  the {\it$2k$-order
schr\"odinger type operator $L_2$ also satisfies the assumptions
{\rm(A$_1$), (A$_2$)} and {\rm(A$_3$)}}.

\section{$k$-Davies-Gaffney estimates}\label{s3}

 In this section, we prove some properties about
the $k$-Davies-Gaffney estimates. We point out that when $k=1$ and
$L$ is a non-negative self-adjoint operator or a second order
divergence form elliptic operator with complex bounded measurable
coefficients, these properties are already well known (see, for
example, \cite{ahlmt,hm09,hm09c,hlmmy,jy,hmm}).

Let $\tz\in[0,\,\pi/2)$ and $E, \,F$ be two closed sets in $\rn$. A
family $\{T(z)\}_{z\in S_\tz^0}$ of operators is said to satisfy the
{\it $k$-Davies-Gaffney estimate in $z$} if there exist positive
constants $C_4$ and $C_5$ such that for all $f\in L^2(\rn)$
supported in $E$ and $z\in S_\tz^0$,
\begin{eqnarray}\label{3.1}
\|T(z)f\|_{L^2(F)}\le C_5
\exp\lf\{-\frac{\lf[\dist(E,\,F)\r]^{2k/(2k-1)}}{C_4|z|^{1/(2k-1)}}
\r\}\|f\|_{L^2(E)}.
\end{eqnarray}

For any operator satisfying the assumptions {\rm(A$_1)$},
{\rm(A$_2)$} and {\rm(A$_3)$} in Section \ref{s2}, we have the
following property.

\begin{lemma}\label{l3.1}
Assume that the operator $L$ defined in $L^2(\rn)$ satisfies the
assumptions {\rm(A$_1)$}, {\rm(A$_2)$} and {\rm(A$_3)$} in Section
\ref{s2}. Then for all $\ell\in(0,\,1)$, $ m\in\zz_+$, the family of
operators, $\{(zL)^me^{-zL}\}_{z\in
S_{\ell(\frac{\pi}{2}-\omega)}^0}$, satisfies the $k$-Davies-Gaffney
estimate in $z$, \eqref{3.1}, with positive constants $C_4$ and
$C_5$ depending only on $m$, $\ell$, $n$, $k$, $\omega$, $\wz C$ and
$C_1$.
\end{lemma}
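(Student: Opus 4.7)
The plan has two parts: first establish the $m=0$ case, namely the $k$-Davies-Gaffney estimate for the semigroup $\{e^{-zL}\}_{z\in S_{\ell(\pi/2-\omega)}^0}$ in complex time; second, apply Cauchy's integral formula for the holomorphic operator-valued function $z\mapsto e^{-zL}$ to derive the estimate for $\{(zL)^m e^{-zL}\}$ for all $m\in\zz_+$ from the $m=0$ case.

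For the $m=0$ extension, fix closed sets $E,F\subset\rn$ with $d:=\dist(E,F)$, test functions $f\in L^2(E)$, $g\in L^2(F)$, and let $\mu:=\pi/2-\omega$ and $\gz:=1/(2k-1)$. The function $\psi(z):=\langle e^{-zL}f,g\rangle$ is holomorphic on $S_\mu^0$ and two bounds are at hand: on $S_\mu^0$, $|\psi(z)|\le\|f\|_{L^2(E)}\|g\|_{L^2(F)}$ by the bounded holomorphic semigroup property provided by (A$_1$) and (A$_2$); on the positive real axis, (A$_3$) gives $|\psi(t)|\le\wz C\exp\{-d^{2k\gz}/(C_1 t^\gz)\}\|f\|_{L^2(E)}\|g\|_{L^2(F)}$. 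Introduce the holomorphic auxiliary
\[
\Phi(z):=\psi(z)\exp\{\beta z^{-\gz}\},\qquad \beta:=d^{2k\gz}/C_1,
\]
where $z^{-\gz}$ is the principal branch, well defined on $S_\mu^0$ since $\gz\mu<\pi/2$. By the choice of $\beta$, $|\Phi(t)|\le\wz C\|f\|_{L^2(E)}\|g\|_{L^2(F)}$ for $t>0$. I then apply Phragm\'en--Lindel\"of: via the conformal map $\xi=-\log z$ the sector $S_{\mu'}^0$ (for any $\mu'\in(\ell\mu,\mu)$) becomes the strip $-\mu'<\Im\xi<0$, on which the transformed $\Phi$ has the required boundary/growth behavior, the subexponential growth being controlled because $\gz<\pi/\mu'$ (since $\gz\mu<\pi/2<\pi(2k-1)$). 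A strip-version of Phragm\'en--Lindel\"of then gives $|\Phi(z)|\le C\|f\|_{L^2(E)}\|g\|_{L^2(F)}$ on $S_{\ell\mu}^0$. Since $\Re(z^{-\gz})=|z|^{-\gz}\cos(\gz\arg z)\ge|z|^{-\gz}\cos(\gz\ell\mu)>0$ on that sector, this yields
\[
|\psi(z)|\le C\exp\lf\{-\dfrac{d^{2k\gz}}{C_4|z|^\gz}\r\}\|f\|_{L^2(E)}\|g\|_{L^2(F)},\quad C_4:=C_1/\cos(\gz\ell\mu),
\]
and dualizing in $g$ furnishes the $k$-Davies-Gaffney estimate for $e^{-zL}$.

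For $m\ge1$ I bootstrap via the Cauchy integral formula. Pick $\ell'\in(\ell,1)$ and $\dz=\dz(\ell,\ell',\mu)\in(0,1)$ small enough that for every $z\in S_{\ell\mu}^0$ the disk $\{w:|w-z|\le\dz|z|\}$ lies in $S_{\ell'\mu}^0$. Then
\[
L^m e^{-zL}=\dfrac{(-1)^m m!}{2\pi i}\oint_{|w-z|=\dz|z|}\dfrac{e^{-wL}}{(w-z)^{m+1}}\,dw.
\]
Applying the $m=0$ estimate (with $\ell'$) to $e^{-wL}$ on the contour, where $(1-\dz)|z|\le|w|\le(1+\dz)|z|$, taking $L^2(F)$-norms, and multiplying by $|z|^m$ so that the factor $(\dz|z|)^{-m}$ arising from the contour combines with $z^m$ to leave only $\dz^{-m}$, produces
\[
\|(zL)^m e^{-zL}f\|_{L^2(F)}\ls\dfrac{m!}{\dz^m}\exp\lf\{-\dfrac{d^{2k\gz}}{C_4(1+\dz)^\gz|z|^\gz}\r\}\|f\|_{L^2(E)},
\]
which is precisely the $k$-Davies-Gaffney estimate for $(zL)^m e^{-zL}$ with constants depending only on $m,\ell,n,k,\omega,\wz C,C_1$ as claimed.

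The main obstacle is the Phragm\'en--Lindel\"of step in the $m=0$ case: one must carefully select the intermediate angle $\mu'\in(\ell\mu,\mu)$, verify that the transformed auxiliary satisfies both the boundary bound on the positive real line and a controlled-growth condition on the opposite edge of the strip, and exploit the crucial inequality $\gz\mu<\pi/2$ (which is precisely where the assumption $\omega\in[0,\pi/2)$ enters) to guarantee that $\Re(z^{-\gz})$ has a strictly positive lower bound on $S_{\ell\mu}^0$, so the auxiliary multiplier $\exp(\beta z^{-\gz})$ genuinely encodes the desired decay. Once the $m=0$ case is in hand, the Cauchy reduction for $m\ge1$ is a routine calculation.
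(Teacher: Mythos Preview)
Your overall strategy coincides with the paper's: first extend the $k$-Davies--Gaffney estimate to complex times (the case $m=0$) by a Phragm\'en--Lindel\"of argument applied to $\psi(z)=\langle e^{-zL}f,g\rangle$, and then bootstrap to general $m$ via Cauchy's integral formula. Your Cauchy step is correct and essentially identical to the paper's (the paper takes contour radius $\eta|z|$ with $\eta<\sin((1-\ell)(\pi/2-\omega))$, which is equivalent to your choice of an intermediate $\ell'\in(\ell,1)$ and radius $\dz|z|$).

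There is, however, a genuine gap in your $m=0$ step. Your auxiliary $\Phi(z)=\psi(z)\exp(\beta z^{-\gz})$ is indeed bounded on the positive real axis, but on the opposite boundary ray $\arg z=\mu'$ of your half-sector it satisfies only
\[
|\Phi(re^{i\mu'})|\le \|f\|_{L^2(E)}\|g\|_{L^2(F)}\exp\bigl(\beta r^{-\gz}\cos(\gz\mu')\bigr),
\]
which blows up as $r\to 0^+$. Phragm\'en--Lindel\"of on a sector or strip requires boundedness on the \emph{entire} boundary; the condition $\gz<\pi/\mu'$ you invoke is the \emph{interior} growth hypothesis, not a substitute for a boundary bound. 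With one boundary edge unbounded the theorem yields nothing, so the argument as written does not close.

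The repair is to choose an auxiliary whose real part vanishes on the non-real edge. On the half-sector $0<\arg z<\mu$, set $H(z):=-i(e^{-i\mu}z)^{-\gz}/\sin(\gz\mu)$; then $\Re H(re^{i\theta})=r^{-\gz}\sin(\gz(\mu-\theta))/\sin(\gz\mu)$, which equals $r^{-\gz}$ on the real axis and $0$ on $\arg z=\mu$. The corrected auxiliary $\psi(z)e^{\beta H(z)}$ is now bounded on both boundary rays, and Phragm\'en--Lindel\"of (after $z\mapsto 1/z$ so that the singularity sits at infinity, where $\gz<\pi/\mu$ is the correct growth exponent) gives the bound throughout; unwinding yields
\[
|\psi(re^{i\theta})|\ls \exp\Bigl(-\tfrac{\beta}{\sin(\gz\mu)}\, r^{-\gz}\sin(\gz(\mu-|\theta|))\Bigr)\|f\|_{L^2(E)}\|g\|_{L^2(F)},
\]
which on $S^0_{\ell\mu}$ is exactly \eqref{3.1}. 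The paper sidesteps this computation entirely by quoting a packaged result, \cite[Lemma~6.18]{ou}, which delivers the analogous bound with the factor $\sin(\mu-|\theta|)/(2(2k-1))$ in the exponent; your route is more self-contained but needs the corrected multiplier above to work.
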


\begin{proof}
We prove this lemma by using some ideas from \cite{hlmmy}. Since
$L$ is of type $\omega$, we know that the semigroup
$\{e^{-tL}\}_{t>0}$ can be extended to a holomorphic semigroup
$\{e^{-zL}\}_{z\in S_{\pi/2-\omega}^0}$. Thus, for all $z\in
S_{\pi/2-\omega}^0$, closed sets $E$, $F\subset\rn$ and $f,\, g\in
L^2(\rn)$ supported respectively in $E$ and $F$, the function
$$G(z):\  z\longmapsto(e^{-zL}f,\,g)$$
is holomorphic on
$S_{\pi/2-\omega}^0$. Moreover, $G$ satisfies the following
properties:
\begin{enumerate}
\item[(i)] there exists a nonnegative constant $C$ such that for all
$z\in S_{\pi/2-\omega}^0$,
\begin{eqnarray*}
|G(z)|\le C \|f\|_{L^2(E)}\|g\|_{L^2(F)},
\end{eqnarray*}
\item[(ii)] there exist nonnegative constants $C$ and $C_1$ such
that for all $t\in(0,\,\fz)$,
\begin{eqnarray*}
|G(t)|\le C \exp\lf\{{-\frac{\lf[\dist(E,\,F)\r]^{2k/(2k-1)}}{
C_1t^{1/(2k-1)}}}\r\} \|f\|_{L^2(E)}\|g\|_{L^2(F)}.
\end{eqnarray*}
\end{enumerate}

In \cite[Lemma 6.18]{ou}, letting $\psi:=\pi/2-\omega$, $a:=
C\|f\|_{L^2(E)}\|g\|_{L^2(F)}$,
$b:=\frac{1}{C_1}\lf[\dist(E,\,F)\r]^{2k/(2k-1)}$, $r:= t$,
$\az:=\frac{1}{2k-1}$ and $\bz:=0$, we then obtain that for any $z:=
re^{i\tz}\in S_{\ell(\pi/2-\omega)}^0$,
\begin{eqnarray}\label{3.2}
\hs\hs\hs|F(z)|&&\ls\exp\lf\{-
\frac{\lf[\dist(E,\,F)\r]^{2k/(2k-1)}}{2(2k-1)C_1r^{1/(2k-1)}}
\sin(\pi/2-\omega-|\tz|)\r\}\|f\|_{L^2(E)}\|g\|_{L^2(F)}\nonumber\\
&&\ls\exp\lf\{-\frac{\lf[\dist(E,\,F)\r]^{2k/(2k-1)}}
{C_4r^{1/(2k-1)}} \r\}\|f\|_{L^2(E)}\|g\|_{L^2(F)},
\end{eqnarray}
where $C_4:=
\frac{2C_1(2k-1)}{\sin((1-\ell)(\frac{\pi}{2}-\omega))}$. From the
analytic property of semigroups and the Cauchy integral formula,
it follows that for all $m\in\nn$ and $z\in
S_{\ell(\frac{\pi}{2}-\omega)}^0$,
\begin{eqnarray}\label{3.3}
(zL)^me^{-zL}=(-z)^m\frac{m!}{2\pi i}\dint_{|\xi-z|=\eta |z|}
e^{-\xi L}\frac{d\xi}{(\xi-z)^{m+1}},
\end{eqnarray}
where $\eta\in(0,\,\sin((1-\ell)(\pi/2-\omega)))$. Thus, for any
$z\in S_{\ell(\frac{\pi}{2}-\omega)}^0$, the ball $B(z,\eta
|z|)\subset S_{\pi/2-\omega}^0$. Combining \eqref{3.2} and
\eqref{3.3}, by Minkowski's inequality, we obtain
\begin{eqnarray*}
\lf\|(zL)^me^{-zL}f\r\|_{L^2(F)}&&\ls |z|^m\dint_{|\xi-z|=\eta
|z|}\lf|\frac{1}{(\xi-z)^{m+1}}\r|
\lf\|e^{-\xi L}f\r\|_{L^2(F)}|d\xi|\\
&&\ls \exp\lf\{-\frac{\lf[\dist(E,\,F)\r]^{2k/(2k-1)}} {C_4
|z|^{1/(2k-1)}}\r\}\|f\|_{L^2(E)},
\end{eqnarray*}
which implies that
$\{(zL)^me^{-zL}\}_{S_{\ell(\frac{\pi}{2}-\omega)}^0}$ satisfies the
$k$-Davies-Gaffney estimate in $z$. This finishes the proof of Lemma
\ref{l3.1}.
\end{proof}

\begin{lemma}\label{l3.2}
Let $\{A_t\}_{t>0}$, $\{B_s\}_{s>0}$ be two families of linear
operators, $C_6$ and $C_7$ two positive constants. Assume that for
all closed sets $E$, $F\subset\rn$, $f\in L^2(\rn)$ supported in
$E$ and $t\in(0,\fz)$, the following estimates hold:
\begin{eqnarray}\label{3.4}
\|A_tf\|_{L^2(F)}\le C_6
\exp\lf\{-\frac{\lf[\dist(E,\,F)\r]^{2k/(2k-1)}}
{C_7t^{1/(2k-1)}}\r\}\|f\|_{L^2(E)},
\end{eqnarray}
and
\begin{eqnarray}\label{3.5}
\|B_sf\|_{L^2(F)}\le C_6
\exp\lf\{-\frac{\lf[\dist(E,\,F)\r]^{2k/(2k-1)}}
{C_7s^{1/(2k-1)}}\r\}\|f\|_{L^2(E)}.
\end{eqnarray}
Then, there exists a positive constant $C$ such that for all
$t,\,s\in(0,\fz)$, all closed sets $E$, $F\subset\rn$ and $f\in
L^2(\rn)$ supported in $E$,
\begin{eqnarray}\label{3.6}
\|A_tB_sf\|_{L^2(F)}\le C
\exp\lf\{-\frac{\lf[\dist(E,\,F)\r]^{2k/(2k-1)}}{\wz C_7(\max\{t,\,s
\})^{1/(2k-1)}}\r\}\|f\|_{L^2(E)},
\end{eqnarray}
where $\wz C_7:= C_72^{2k/(2k-1)}$.
\end{lemma}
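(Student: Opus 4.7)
The plan is to reduce composition to the one-operator estimates by splitting the intermediate function according to its distance from $E$, so that one factor always sees a large separation. Set $d:=\dist(E,F)$ and introduce the ``halfway'' set
\[
G:=\{x\in\rn:\dist(x,E)\le d/2\},\qquad G^\complement=\rn\setminus G.
\]
An elementary triangle-inequality check gives $\dist(G,F)\ge d/2$ (any point within $d/2$ of $E$ is at least $d/2$ from $F$) and $\dist(G^\complement,E)\ge d/2$ by definition. We then decompose
\[
A_tB_sf=A_t(\chi_G B_s f)+A_t(\chi_{G^\complement}B_sf)
\]
and estimate each term in $L^2(F)$ separately.

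For the first term, $\chi_G B_s f$ is supported in $G$ and $\dist(G,F)\ge d/2$, so applying \eqref{3.4} (with source set $G$) and then the trivial $L^2$-boundedness of $B_s$ (take $F=\rn$ in \eqref{3.5}) yields
\[
\|A_t(\chi_G B_sf)\|_{L^2(F)}\ls\exp\lf\{-\frac{(d/2)^{2k/(2k-1)}}{C_7 t^{1/(2k-1)}}\r\}\|f\|_{L^2(E)}.
\]
For the second term, use the $L^2$-boundedness of $A_t$ (from \eqref{3.4} with $F=\rn$) and then apply \eqref{3.5} with target set $G^\complement$, since $\dist(E,G^\complement)\ge d/2$:
\[
\|A_t(\chi_{G^\complement}B_sf)\|_{L^2(F)}\ls\|B_sf\|_{L^2(G^\complement)}\ls\exp\lf\{-\frac{(d/2)^{2k/(2k-1)}}{C_7 s^{1/(2k-1)}}\r\}\|f\|_{L^2(E)}.
\]

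Adding the two bounds and using $\min(t^{-1/(2k-1)},s^{-1/(2k-1)})=(\max\{t,s\})^{-1/(2k-1)}$, one of the two exponentials dominates, and both are bounded by
\[
\exp\lf\{-\frac{(d/2)^{2k/(2k-1)}}{C_7(\max\{t,s\})^{1/(2k-1)}}\r\}=\exp\lf\{-\frac{d^{2k/(2k-1)}}{C_7\cdot 2^{2k/(2k-1)}(\max\{t,s\})^{1/(2k-1)}}\r\},
\]
which identifies the constant $\wz C_7=C_7\,2^{2k/(2k-1)}$ appearing in the conclusion. This proves \eqref{3.6}.

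There is no real obstacle here; the argument is the classical Davies–Gaffney composition trick adapted to the exponent $2k/(2k-1)$. The only small point that needs care is to ensure that in each term one factor truly sees separation $\ge d/2$, which is exactly what the definition of $G$ is designed to give. The dependence of the implicit constant on $C_6$ is harmless since $C_6$ enters only as a multiplicative factor from each of the two applications.
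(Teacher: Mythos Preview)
Your proof is correct and follows essentially the same approach as the paper: split the intermediate function with a ``halfway'' set and apply the single-operator estimates so that one factor always sees separation at least $d/2$. The only cosmetic difference is that the paper defines the intermediate set as a neighborhood of $F$ (namely $G:=\{x:\dist(x,F)<\rho/2\}$) rather than of $E$; this swaps which of the two terms carries the $t$-decay versus the $s$-decay, but the argument and the resulting constant $\wz C_7=C_7\,2^{2k/(2k-1)}$ are identical.
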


\begin{proof} If $\dist (E,\,F)=0$, then \eqref{3.6} is a simple corollary
of \eqref{3.4} and \eqref{3.5}. Now, we assume that
$\dist(E,\,F)>0$. As in \cite{hm}, let $\rho:= \dist(E,\,F)$ and
$G:=\{x\in\rn:\ \dist(x,\,F)<\rho/2\}$. Denote by $\overline G$ the
{\it closure} of $G$. It is clear that $\dist(E,\,\overline
G)\ge\rho/2$. Moreover, by \eqref{3.4} and \eqref{3.5}, we have
\begin{eqnarray*}
\|A_t(\chi_G B_sf)\|_{L^2(F)} &&\le\|A_t(\chi_G B_sf)
\|_{L^2(\rn)}\ls\|B_sf\|_{L^2(\overline G)}\\
&&\ls\exp\lf\{-\frac{[\dist(E,\,\overline G)]^{2k/(2k-1)}}
{C_7 s^{1/(2k-1)}}\r\}\|f\|_{L^2(E)}\\
&&\sim\exp\lf\{-\frac{\lf[\dist(E,\,F)\r]^{2k/(2k-1)}} {C_7
2^{2k/(2k-1)} s^{1/(2k-1)}}\r\}\|f\|_{L^2(E)}.
\end{eqnarray*}
Let $\wz C_7:= C_72^{2k/(2k-1)}$. Similarly, by \eqref{3.4} and
\eqref{3.5}, we obtain
\begin{eqnarray*}
\|A_t(\chi_{\rn\setminus G}B_sf)\|_{L^2(F)}
&&\ls\exp\lf\{-\frac{\lf[\dist(\rn\setminus G,\,F)\r]^{2k/(2k-1)}}
{C_7t^{1/(2k-1)}}\r\}\|B_sf\|_{L^2(E)}\\
&&\ls\exp\lf\{-\frac{\lf[\dist(E,\,F)\r]^{2k/(2k-1)}} {\wz
C_7t^{1/(2k-1)}}\r\}\| f\|_{L^2(E)}.
\end{eqnarray*}
 Combining the above estimates, we have
\begin{eqnarray*}
\|A_tB_sf\|_{L^2(F)}
&&\le\|A_t(\chi_GB_sf)\|_{L^2(F)}+\|A_t(\chi_{\rn\setminus
G}B_sf)\|_{L^2(F)}\\
&&\ls\lf[\exp\lf\{-\frac{\lf[\dist(E,\,F)\r]^{2k/(2k-1)}} {\wz
C_7s^{1/(2k-1)}}\r\}+\exp\lf\{-\frac{\lf[\dist(E,\,F)\r]^{2k/(2k-1)}}
{\wz C_7t^{1/(2k-1)}}\r\}\r]\|f\|_{L^2(E)}\\
&&\ls\exp\lf\{-\frac{\lf[\dist(E,\,F)\r]^{2k/(2k-1)}} {\wz
C_7\max \{t,\,s\}^{1/(2k-1)}}\r\}\|f\|_{L^2(E)},
\end{eqnarray*}
which completes the proof of Lemma \ref{l3.2}.
\end{proof}

Let $\sz\in [0,\fz)$. As in \cite{hmm}, a family $\{T_t\}_{t>0}$ of
operators is said to satisfy the {\it $k$-Davies-Gaffney estimate of
order $\sz$}, if there exists a positive constant $C_8$, depending
on $\sz$, such that for all closed sets $E$, $F\subset\rn$, $g\in
L^2(\rn)$ supported in $E$ and $t\in(0,\,\fz)$,
\begin{eqnarray}\label{3.7}
\|T_tg\|_{L^2(F)}\le
C_8\min\lf\{1,\,\frac{t}{\lf[\dist(E,\,F)\r]^{2k}}\r\}^\sz\|g\|_{L^2(E)}.
\end{eqnarray}

\begin{lemma}\label{l3.3}
Let $\mu\in(\omega,\,\pi/2)$, $\psi\in\Psi_{\sz,\,\tau}(S_\mu^0)$
for some $\sz\in(0,\,\fz),\,\tau\in(1,\,\fz)$, and $f\in
H_\fz(S_\mu^0)$. Then the family $\{\psi(tL)f(L)\}_{t>0}$ of
operators satisfies the $k$-Davies-Gaffney estimate of order $\sz$,
\eqref{3.7}, with the positive constant $C_8$ controlled by
$\|f\|_{L^\fz(S_\mu^0)}$.
\end{lemma}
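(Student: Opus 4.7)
Write $\dz:=\dist(E,F)$ and define $(\psi_tf)(\xi):=\psi(t\xi)f(\xi)$. Since $\psi_tf\in H_\fz(S_\mu^0)$ with $\|\psi_tf\|_{L^\fz(S_\mu^0)}\le\|\psi\|_{L^\fz(S_\mu^0)}\|f\|_{L^\fz(S_\mu^0)}$ independently of $t$, the bounded $H_\fz$ functional calculus assumption {\rm(A$_2$)} gives $\|\psi(tL)f(L)\|_{\cl(L^2(\rn))}=\|(\psi_tf)(L)\|_{\cl(L^2(\rn))}\ls\|f\|_{L^\fz(S_\mu^0)}$. This already establishes \eqref{3.7} in the regime $\dz^{2k}\le t$, where the minimum in \eqref{3.7} equals $1$; what remains is to extract the polynomial decay $(t/\dz^{2k})^\sz$ when $\dz^{2k}>t$.

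To that end, I observe that $\psi_tf\in\Psi_{\sz,\tau}(S_\mu^0)$ (holomorphic on $S_\mu^0$, vanishing of order $\sz$ at $0$ and decaying of order $\tau$ at $\fz$), so the representation \eqref{2.4} applies and yields
\begin{equation*}
\psi(tL)f(L)=(\psi_tf)(L)=\dint_{\Gamma_+}e^{-wL}\eta_+^{\psi_tf}(w)\,dw+\dint_{\Gamma_-}e^{-wL}\eta_-^{\psi_tf}(w)\,dw,
\end{equation*}
with $\eta_\pm^{\psi_tf}$ defined via \eqref{2.5}. Parameterizing $\gamma_\pm=\rr^+e^{\pm i\nu}$ and applying $|\psi(t\xi)|\ls\min\{(t|\xi|)^\sz,(t|\xi|)^{-\tau}\}$ together with $|f|\le\|f\|_{L^\fz(S_\mu^0)}$, the change of variable $u=t|\xi|$ in the integral defining $\eta_\pm^{\psi_tf}$ produces
\begin{equation*}
|\eta_\pm^{\psi_tf}(w)|\ls\frac{\|f\|_{L^\fz(S_\mu^0)}}{t}\min\{1,(|w|/t)^{-\sz-1}\},
\end{equation*}
the hypothesis $\tau>1$ being precisely what guarantees convergence of the $u$-integral at infinity when $|w|/t$ is small.

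Next, choosing $\tz\in(\omega,\pi/2)$ close enough to $\omega$ places $\Gamma_\pm$ inside some $S^0_{\ell(\pi/2-\omega)}$ with $\ell\in(0,1)$, and Lemma \ref{l3.1} with $m=0$ delivers
\begin{equation*}
\|e^{-wL}g\|_{L^2(F)}\ls\exp\lf\{-\frac{\dz^{2k/(2k-1)}}{C|w|^{1/(2k-1)}}\r\}\|g\|_{L^2(E)}
\end{equation*}
for every $g\in L^2(\rn)$ supported in $E$ and every $w\in\Gamma_\pm$. Combining this with the bound on $|\eta_\pm^{\psi_tf}|$ and parameterizing $w$ by $s:=|w|$, the desired estimate reduces to bounding
\begin{equation*}
J:=\frac{1}{t}\dint_0^\fz\min\{1,(s/t)^{-\sz-1}\}\exp\lf\{-\frac{\dz^{2k/(2k-1)}}{Cs^{1/(2k-1)}}\r\}\,ds
\end{equation*}
by a constant multiple of $(t/\dz^{2k})^\sz$.

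The main (and essentially only) technical step is this bound on $J$. After the substitution $s=tu$ and the introduction of the dimensionless parameter $B:=\dz^{2k/(2k-1)}/(Ct^{1/(2k-1)})$ (so that $B^{(2k-1)\sz}\sim\dz^{2k\sz}/t^\sz$), $J$ becomes $\int_0^\fz\min\{1,u^{-\sz-1}\}\exp(-B/u^{1/(2k-1)})\,du$. I would split this at $u=1$ and $u=B^{2k-1}$: on $(0,1)$ the substitution $v=B/u^{1/(2k-1)}$ shows the contribution is $\ls e^{-B}$; on $(1,B^{2k-1})$ the same substitution turns the integrand into $B^{-(2k-1)\sz}v^{(2k-1)\sz-1}e^{-v}$, controlled by a (finite) incomplete Gamma integral; on $(B^{2k-1},\fz)$ the polynomial tail $u^{-\sz-1}$ alone yields $\ls B^{-(2k-1)\sz}$. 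Summing the three pieces gives $J\ls B^{-(2k-1)\sz}\sim(t/\dz^{2k})^\sz$, which completes the proof. The remainder of the argument is a routine assembly of the contour formula \eqref{2.4}, the $\eta$-estimate, and Lemma \ref{l3.1}.
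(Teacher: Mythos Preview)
Your proposal is correct and follows essentially the same route as the paper: both arguments invoke the semigroup representation \eqref{2.4}--\eqref{2.5}, derive the bound $|\eta_\pm(w)|\ls\frac{\|f\|_{L^\fz(S_\mu^0)}}{t}\min\{1,(t/|w|)^{\sz+1}\}$ (the paper quotes this as \cite[(2.32)]{hmm}, you sketch its derivation and correctly identify $\tau>1$ as the convergence hypothesis), apply Lemma \ref{l3.1} to $e^{-wL}$ along $\Gamma_\pm$, and then split the resulting one-dimensional integral at the scales $|w|\sim t$ and $|w|\sim\dz^{2k}$. The only cosmetic difference is that you dispose of the regime $t\ge\dz^{2k}$ at the outset via the bounded $H_\fz$ calculus rather than carrying it through the contour formula, and you package the final integral estimate via the substitution $u=s/t$ and the parameter $B$; the paper instead estimates $\mathrm{O}_1$ and $\mathrm{O}_2$ directly in the variable $|z|$.
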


\begin{proof}
For any fixed $\psi\in \Psi_{\sz,\tau}(S_\mu^0)\subset\Psi
(S_\mu^0)$ and $f\in H_\fz(S_\mu^0)$, by \eqref{2.4} and
\eqref{2.5}, we have
\begin{eqnarray}\label{3.8}
\psi(tL)f(L)=\dint_{\Gamma_+}e^{-zL}\eta_+(z)\,dz+\dint_{
\Gamma_-}e^{-zL}\eta_-(z)\,dz,
\end{eqnarray}
where $\Gamma_\pm:=\mathbb{R}^+e^{\pm i (\pi/2-\tz)}$ and for all
$z\in \Gamma_\pm$,
$$\eta_\pm(z)=\frac{1}{2\pi
i}\dint_{\gz_\pm}e^{\xi z}\psi(t\xi) f(\xi)\,d\xi,$$
$\gz_\pm:=\mathbb{R}^+e^{\pm i\nu}$ and
$0\le\omega<\tz<\nu<\mu<\pi/2$. It was proved in \cite[(2.32)]{hmm}
that for all $z\in\Gamma_\pm$,
\begin{eqnarray}\label{3.9}
|\eta_\pm(z)|\ls\frac{\|f\|_{L^\fz(S_\mu^0)}}{t}\min\lf\{1,\,
\lf(\frac{t}{|z|}\r)^{\sz+1}\r\}.
\end{eqnarray}

Thus, by \eqref{3.8} and Minkowski's inequality, we have that for
all $g\in L^2(\rn)$ supported in $E$,
\begin{eqnarray*}
\lf\|\psi(tL)f(L)g\r\|_{L^2(F)}\le\dint_{\Gamma_+}
\lf\|e^{-zL}g\r\|_{L^2(F)}|\eta_+(z)|\,|dz|+\dint_{\Gamma_-}
\lf\|e^{-zL}g\r\|_{L^2(F)}|\eta_-(z)|\,|dz|
=:\mathrm{J}_++\mathrm{J}_-.
\end{eqnarray*}

Since $\pi/2-\tz<\pi/2-\omega$, there exists a positive number
$\ell\in(0,\,1)$ such that $\pi/2-\tz<\ell(\pi/2-\omega)$, which
immediately yields that $S_{\pi/2-\tz}^0\subset
S_{\ell(\pi/2-\omega)}^0$.  Thus, by Lemma \ref{l3.1}, the family
$\{e^{-zL}\}_{z\in S_{\pi/2-\tz}^0}$ satisfies the
$k$-Davies-Gaffney estimate in $z$, which, together with
\eqref{3.9}, implies that
\begin{eqnarray*}
\mathrm{J}_\pm&&\ls\|g\|_{L^2(E)}\dint_{\Gamma_\pm}
\exp\lf\{-\frac{\lf[\dist(E,\,F)\r]^{2k/(2k-1)}}
{C_4|z|^{1/(2k-1)}}\r\}|\eta_\pm(z)|\,|dz|\\
&&\ls\|f\|_{L^\fz(S_\mu^0)}\|g\|_{L^2(E)}\dint_{\Gamma_\pm
}\exp\lf\{-\frac{\lf[\dist(E,\,F)\r]^{2k/(2k-1)}}{C_4|z|^{1/(2k-1)}}\r\}
\min\lf\{1,\,\lf(\frac{t}{|z|}\r)^{\sz+1}\r\}\frac{1}{t}\,|dz|\\
&&\ls\lf[\dint_{ \{z\in\Gamma_\pm:\ |z|\le
t\}}\exp\lf\{-\frac{\lf[\dist(E,\,F)\r]^{2k/(2k-1)}}
{C_4|z|^{1/(2k-1)}}\r\}\min\lf\{1,\,\lf(\frac{t}{|z|}\r)^{\sz+1}\r\}
\,\frac{1}{t}\,|dz|\r.\\
&&\hs\lf.+\dint_{\{z\in\Gamma_\pm:\ |z|> t\}}\cdots\r]
\|f\|_{L^\fz(S_\mu^0)}\|g\|_{L^2(E)}\\
&&=:\lf[\mathrm{O}_1+
\mathrm{O}_2\r] \|f\|_{L^\fz(S_\mu^0)}\|g\|_{L^2(E)}.
\end{eqnarray*}
We estimate $\mathrm{O}_1$ by
\begin{eqnarray*}
\mathrm{O}_1\ls\dint_{\lf\{z\in\Gamma_\pm:\ |z|\le t\r\}}
\exp\lf\{-\frac{\lf[\dist(E,\,F)\r]^{2k/(2k-1)}}{C_4|z|^{1/(2k-1)}}\r\}
\frac{1}{t}\,|dz|\ls\exp\lf\{-\frac{\lf[\dist(E,\,F)\r]^{2k/(2k-1)}}
{C_4|t|^{1/(2k-1)}}\r\}.
\end{eqnarray*}
On the other hand, $\mathrm{O}_2$ can be written into
\begin{eqnarray*}
\mathrm{O}_2\ls\dint_{\{z\in\Gamma_\pm:\ |z|> t\}}
\exp\lf\{-\frac{\lf[\dist(E,\,F)\r]^{2k/(2k-1)}}{C_4|z|^{1/(2k-1)}}\r\}
\lf(\frac{t}{|z|}\r)^{\sz+1}\frac{1}{t}\,|dz|.
\end{eqnarray*}
If $t\ge \lf[\dist(E,\,F)\r]^{2k}$, in this case, we have
\begin{eqnarray*}
\mathrm{O}_2\ls\dint_{\{z\in\Gamma_\pm:\ |z|> t\}}\lf(\frac{t}
{|z|}\r)^{\sz+1}\frac{1}{t}\,|dz|\sim1.
\end{eqnarray*}
If $t<\lf[\dist(E,\,F)\r]^{2k}$, by choosing $N\in[\sz,\,\fz)$, we
obtain
\begin{eqnarray*}
\mathrm{O}_2&&\ls\dint_{\{z\in\Gamma_\pm:\ t<|z|\le
\lf[\dist(E,\,F)\r]^{2k}\}}
\lf(\frac{|z|}{\lf[\dist(E,\,F)\r]^{2k}}\r)^{N}\lf(\frac{t}{|z|}\r)^{\sz+1}
\frac{1}{t}\,|dz|\\
&&\hs+\dint_{\{z\in\Gamma_\pm:\ |z|>
\lf[\dist(E,\,F)\r]^{2k}\}}\lf(\frac{t}{|z|}
\r)^{\sz+1}\frac{1}{t}\,|dz|\\
&&\ls\lf(\frac{t}{\lf[\dist(E,\,F)\r]^{2k}}\r)^N
+\lf(\frac{t}{\lf[\dist(E,\,F)\r]^{2k}}\r)^\sz \sim
\lf(\frac{t}{\lf[\dist(E,\,F)\r]^{2k}}\r)^\sz.
\end{eqnarray*}
Combining the estimates of $\mathrm{O}_1$ and $\mathrm{O}_2$, we
obtain that $\{\psi(tL)f(L)\}_{t>0}$ satisfies the
$k$-Davies-Gaffney estimate of order $\sz$, which completes the
proof of Lemma \ref{l3.3}.
\end{proof}

Now, we turn to some properties of the operators $L_1$ and $L_2$
given in Section \ref{s2}. First, we introduce the definition of the
Legendre transform. Let $h$ be a real valued function defined on
$[0,\,\fz)$. The {\it Legendre transform} $h^\sharp$ of $h$ is
defined by setting, for all $s\in\rr$,
\begin{eqnarray}\label{3.10}
h^\sharp(s):=\sup_{t\ge0}\{st-h(t)\}.
\end{eqnarray}
We have the following proposition about the operator $L_1$.
\begin{proposition}\label{p3.1}
Let $L_1$ be the $2k$-order divergence form homogeneous elliptic
operator defined as in Section \ref{s2}. Then, the semigroup
$\{e^{-tL_1}\}_{t>0}$ satisfies the $k$-Davies-Gaffney estimate.
\end{proposition}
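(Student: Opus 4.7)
The plan is to adapt the classical Davies perturbation method to the $2k$-order setting, in the spirit of \cite[Lemma 2]{da2}, combining the ellipticity condition \eqref{2.9} with a Legendre-transform optimization. I would first fix a real-valued bounded Lipschitz function $\fai$ on $\rn$ with $\|\nabla\fai\|_{L^\fz(\rn)}\le 1$ (smoothed by mollification, if necessary, so that $\|\nabla^j\fai\|_{L^\fz(\rn)}$ is finite for $1\le j\le k$) and a parameter $\rho\in(0,\fz)$, and consider the perturbed sesquilinear form
\[
\mathfrak{a}_\rho(f,g):=\mathfrak{a}\bigl(e^{-\rho\fai}f,\,e^{\rho\fai}g\bigr),\qquad f,g\in W^{k,2}(\rn).
\]
Expanding $\pat^\az(e^{\rho\fai}g)$ and $\pat^\bz(e^{-\rho\fai}f)$ via the Leibniz rule, each is a sum of terms of the form $e^{\pm\rho\fai}\rho^{k-|\gz|}P_{\gz}\bigl(\nabla\fai,\,\ldots,\,\nabla^{k-|\gz|}\fai\bigr)\pat^\gz(\cdot)$ with $|\gz|\le k$ and $P_\gz$ a polynomial; since the exponentials cancel in $\mathfrak{a}_\rho$, this realizes $\mathfrak{a}_\rho$ as $\mathfrak{a}$ plus lower-order terms polynomially controlled in $\rho$.

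Next, I would combine the ellipticity condition \eqref{2.9} with repeated Young's inequalities to absorb the cross terms into the principal part and a zeroth-order remainder, obtaining the key G\aa rding-type bound
\[
\Re\mathfrak{a}_\rho(f,f)+C\rho^{2k}\|f\|_{L^2(\rn)}^2\ge \frac{\lz}{2}\|\nabla^kf\|_{L^2(\rn)}^2\ge 0\qquad\forall\,f\in W^{k,2}(\rn),
\]
where $C=C(\lz,\Lambda,n,k)$. A standard form-theory argument (cf.\ \cite{ha}) then shows that the operator $L_1^{(\rho)}$ associated with $\mathfrak{a}_\rho$ satisfies $L_1^{(\rho)}+C\rho^{2k}I$ is $m$-accretive on $L^2(\rn)$, and from the energy identity $\frac{d}{dt}\bigl\|e^{-tL_1^{(\rho)}}g\bigr\|_{L^2(\rn)}^2=-2\Re\mathfrak{a}_\rho\bigl(e^{-tL_1^{(\rho)}}g,\,e^{-tL_1^{(\rho)}}g\bigr)$ together with Gronwall's inequality I would deduce
\[
\bigl\|e^{-tL_1^{(\rho)}}g\bigr\|_{L^2(\rn)}\le e^{Ct\rho^{2k}}\|g\|_{L^2(\rn)}\qquad\forall\,t\in(0,\fz),\ g\in L^2(\rn).
\]
Because $\mathfrak{a}_\rho$ is built by conjugation, $e^{-tL_1^{(\rho)}}=e^{\rho\fai}e^{-tL_1}e^{-\rho\fai}$, and therefore, for any closed $E,F\subset\rn$ and any $f\in L^2(\rn)$ with $\supp f\subset E$,
\[
\bigl\|e^{-tL_1}f\bigr\|_{L^2(F)}\le\exp\bigl\{\rho(\sup\nolimits_E\fai-\inf\nolimits_F\fai)+Ct\rho^{2k}\bigr\}\|f\|_{L^2(E)}.
\]
Choosing $\fai(x):=\min\{\dist(x,E),\,\dist(E,F)\}$ (which is $1$-Lipschitz, equals $0$ on $E$ and equals $d:=\dist(E,F)$ on $F$) reduces the exponent to $-\rho d+Ct\rho^{2k}$.

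Finally, I would optimize over $\rho\in(0,\fz)$ using the Legendre transform \eqref{3.10} applied to $h(\rho):=Ct\rho^{2k}$: the optimal choice $\rho\sim(d/(Ct))^{1/(2k-1)}$ gives
\[
\inf_{\rho>0}\bigl\{-\rho d+Ct\rho^{2k}\bigr\}\sim-\frac{d^{2k/(2k-1)}}{C_1 t^{1/(2k-1)}},
\]
which is exactly the exponent required in \eqref{2.6}. The main obstacle is the G\aa rding-type inequality above: for a $2k$-order form, the Leibniz expansion of $\mathfrak{a}_\rho$ produces numerous cross terms of intermediate type $\rho^{i+j}\|\nabla^{k-i}f\|_{L^2(\rn)}\|\nabla^{k-j}f\|_{L^2(\rn)}$ with $0<i+j<2k$, and each must be absorbed via Young's inequality with carefully chosen weights into either the principal term $\tfrac{\lz}{2}\|\nabla^kf\|_{L^2(\rn)}^2$ or the zeroth-order remainder $C\rho^{2k}\|f\|_{L^2(\rn)}^2$; the ellipticity of the top-order coefficients is essential here, while the $L^\fz$-control of $\nabla^j\fai$ for $1\le j\le k$ (obtained via mollification of the distance function) is what keeps the coefficient polynomials $P_\gz$ bounded.
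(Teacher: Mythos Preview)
Your approach is essentially the same as the paper's --- Davies perturbation via a twisted form, the G\aa rding-type bound from ellipticity and Young's inequality, the energy identity plus Gronwall for the perturbed semigroup, and Legendre optimization in $\rho$. The paper's treatment of the key absorption step uses Plancherel to interpolate $\rho^{2|\gz|}|\xi|^{2(k-|\gz|)}$ between $\rho^{2k}$ and $|\xi|^{2k}$, which is exactly your ``repeated Young's inequalities,'' and its \eqref{3.13} is equivalent to your G\aa rding bound.

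There is, however, one genuine subtlety you gloss over, and it is precisely where the paper's route diverges from yours. You assert that the G\aa rding constant satisfies $C=C(\lz,\Lambda,n,k)$, yet you also acknowledge that ``the $L^\fz$-control of $\nabla^j\fai$ for $1\le j\le k$ \ldots\ is what keeps the coefficient polynomials $P_\gz$ bounded.'' These two claims are in tension: the Fa\`a di Bruno expansion of $\pat^{\az-\gz}(e^{\rho\fai})$ produces powers of $\rho$ ranging from $1$ to $k-|\gz|$ (not only $\rho^{k-|\gz|}$), and the lower-power terms carry factors of $\nabla^2\fai,\ldots,\nabla^{k}\fai$. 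Hence $C$ genuinely depends on $\|\nabla^j\fai\|_{L^\fz(\rn)}$ for $2\le j\le k$. Mollifying the distance function at scale $\ez$ makes these norms grow like $\ez^{1-j}$, so you cannot send $\ez\to 0$; a fixed scale $\ez\sim 1$ works for $d\gs 1$ but leaves small $d$ uncovered (and the estimate is \emph{not} trivial for small $d$ when $t$ is also small). The paper sidesteps this by restricting to test functions in the normalized class $\mathcal{A}:=\{\phi\in C^\fz(\rn)\cap L^\fz(\rn):\ \|D^\az\phi\|_{L^\fz(\rn)}\le 1,\ 1\le|\az|\le k\}$, proving the perturbed bound \eqref{3.11} for all $\phi\in\mathcal{A}$, and then invoking the equivalence theorem \cite[Theorem~1.2]{bk2} together with Davies' metric comparison \cite[Lemma~4]{da2} (the Finsler distance induced by $\mathcal{A}$ coincides with the Euclidean one) to pass to the off-diagonal form \eqref{3.12}. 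Your direct route can be repaired --- for instance, mollify at unit scale and then exploit the scale invariance of the estimate under $x\mapsto\lambda x$, $t\mapsto\lambda^{2k}t$ (which preserves the ellipticity constants $\lz,\Lambda$) to handle arbitrary $d$ --- but as written the step from the perturbed semigroup bound to the Davies--Gaffney estimate is incomplete.
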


\begin{proof}
We prove Proposition \ref{p3.1} by borrowing some ideas from
\cite{bk1,bk2,da2}. In \cite[Theorem 1.2]{bk2}, letting
$(\boz,\,\mathscr{U},\,\mu,\,d)$ be the usual Euclidean space
$\rn$, endowed with the Lebesgue measure $dx$ and the Euclidean
distant $d$, and with the set class $\mathscr{U}$ being the set of
all Lebesgue measurable sets, and also letting
$$\mathcal{A}:=\lf\{\phi\in C^\fz(\rn)\cap L^\fz(\rn):\
\|D^\az\phi\|_{L^\fz(\rn)}\le1,\,1\le|\az|\le k\r\},$$
$p\equiv
q:=2$, $\az\equiv\bz\equiv\gamma:=0$, $r:= t^{1/(2k)}$, $h(x):=
x^{2k/(2k-1)}$ for all $x\in[0,\,\fz)$ and $\mathcal{R}:=
e^{-tL_1}$, we then obtain the following two equivalent
statements:
\begin{enumerate}
\item[(i)] There exists a positive constant $C(k)$,
depending on $k$, such that for all $\phi\in\mathcal{A}$,
$\rho\in[0,\,\fz)$ and $t\in(0,\fz)$,
\begin{eqnarray}\label{3.11}
\|e^{-\rho\phi} e^{-tL_1}e^{\rho\phi}\|_{\cl(L^2(\rn))}\le
e^{C(k)h^\sharp(\rho t^{1/(2k)})},
\end{eqnarray}
where, by \eqref{3.10},
\begin{eqnarray*}
h^\sharp(\rho t^{1/(2k)})&&:=\sup_{s\ge0}\lf\{\rho
t^{1/(2k)}s-s^{2k/(2k-1)}\r\}=\lf[\frac{(2k-1)^{2k-1}}{(2k)^{2k}}
\r]\rho^{2k}t;
\end{eqnarray*}
\item[(ii)] There exists a positive constant $C_1$ such that
for all the closed sets $E$ and $F$ of $\rn$ and $t\in (0,\fz)$,
\begin{eqnarray}\label{3.12}
\lf\|\chi_E e^{-tL_1}\chi_F\r\|_{\cl(L^2(\rn))}\le \exp{
\lf(-\frac{\dist(E,\,F)}{C_1t^{1/(2k)}}\r)^{2k/(2k-1)}}.
\end{eqnarray}
\end{enumerate}
Recall that in this case, by \cite[Lemma 4]{da2}, $d(E,\,F)$
defined in \cite[(1.4)]{bk2} is equivalent to $\inf_{x\in E,\,y\in
F} |x-y|$ and, moreover, $C(k)$ in \cite[Theorem 1.2(ii)]{bk2} is
assumed to be $1$. However, by the change of variables, we can
easily see that the above equivalent statements are a directly
corollary of the equivalence of (ii) and (iii) of Theorem 1.2 in
\cite{bk2}.

Notice that by the density of the simple functions in $L^2(\rn)$,
\eqref{3.12} is equivalent to the $k$-Davies-Gaffney estimate.
Thus, to prove that the semigroup $\{e^{-tL_1}\}_{t>0}$ satisfies
the $k$-Davies-Gaffney estimate, by the above equivalence of (i)
and (ii), it suffices to prove \eqref{3.11}.

To this end, let $\mathfrak{a}$ be the sesquilinear form as in
\eqref{2.7} associated with $L_1$. Recall that its {\it twisted
form} is defined by setting, for all $\rho\in[0,\,\fz)$, $\phi\in
\mathcal{A}$ and $f,\,g\in W^{k,2}(\rn)$,
$$\mathfrak{a}_{\rho\phi}(f,\,g):=\mathfrak{a}
(e^{\rho\phi}f,\,e^{-\rho\phi}g),$$ which, together with the Leibniz
formula, further yields that there exist positive constants
$C(\az,\gz)$ and $C(\bz,\wz\gz)$ with $|\az|=|\bz|=k$, $0<\gz\le\az$
and $0<\wz\gz\le\bz$,
\begin{eqnarray*}
\mathfrak{a}_{\rho\phi}(f,\,f)&&=\dsum_{|\az|=|\bz|=k}\dint_\rn
a_{\az,\bz}
(x)\pat^\az(e^{\rho\phi}f)(x)\overline{\pat^\bz(e^{-\rho\phi}f)(x)}\,dx\\
&&=\dsum_{|\az|=|\bz|=k}\dint_\rn
a_{\az,\bz}(x)\lf\{\lf[\dsum_{0<\gz\le\az} C(\az,\gz)\pat^\gz
e^{\rho\phi(x)}\pat^{\az-\gz}f(x)+
e^{\rho\phi(x)}\pat^\az f(x)\r]\r.\\
&&\hs\lf.+\lf[\dsum_{0<\wz\gz\le\bz}
C(\bz,\wz\gz)\overline{\pat^{\wz\gz}e^{-\rho\phi(x)}\pat^{\bz-\wz\gz}
f(x)}+\overline{e^{-\rho\phi(x)}\pat^\bz f(x)}\r]\r\}\,dx\\
&&=\dsum_{|\az|=|\bz|=k}\dint_\rn a_{\az,\bz}(x)\lf\{\lf[\dsum_{0<\gz\le\az,
0<\wz\gz\le\bz}C(\az,\gz)C(\bz,\wz\gz)\pat^\gz e^{\rho\phi(x)}
\pat^{\az-\gz}f(x)\overline{\pat^{\wz\gz}e^{-\rho\phi(x)}
\pat^{\bz-\wz\gz}f(x)}\r]\r.\\
&&\hs\lf.+\lf[\overline{e^{-\rho\phi(x)}\pat^\bz f(x)}
\dsum_{0<\gz\le\az}C(\az,\gz)\pat^{\gz}e^{\rho\phi(x)}
\pat^{\az-\gz}f(x)\r]\r.\\
&&\hs\lf.+\lf[\overline{e^{\rho\phi(x)}\pat^\az f(x)}
\dsum_{0<\wz\gz\le\bz}C(\bz,\wz\gz)\pat^{\wz\gz}e^{-\rho\phi(x)}
\pat^{\bz-\wz\gz}f(x)\r]\r\}\,dx+\mathfrak{a}(f,\,f).
\end{eqnarray*}
Let $C(\az,0):=1=: C(\bz,0)$ and $\wz
C(k):=\Lambda\sum_{|\az|=|\bz|=k}
[\sum_{0\le\gz\le\az,0\le\wz\gz\le\bz}C(\az,\gz)C(\bz,\wz\gz)]$,
where $\Lambda$ is as in \eqref{2.8}. By this estimate,
$\phi\in\mathcal{A}$, \eqref{2.8} and H\"older's inequality, we
further have
\begin{eqnarray*}
&&\lf|\mathfrak{a}_{\rho,\phi}(f,\,f)-\mathfrak{a}(f,\,f)\r|\\
&&\hs\le
\Lambda\dsum_{|\az|=|\bz|=k}\dint_{\rn}\lf\{\dsum_{0\le\gz\le\az,
0\le\wz\gz\le\bz}C(\az,\gz)C(\bz,\wz\gz)\lf|\rho^{|\gz|}
\pat^{\az-\gz}f(x)\rho^{|\wz\gz|}\overline{\pat^{\bz-\wz\gz}f(x)}
\r|\r\}\,dx\\
&&\hs\le \Lambda\dsum_{|\az|=|\bz|=k}\dsum_{0\le\gz\le\az,
0\le\wz\gz\le\bz}C(\az,\gz)C(\bz,\wz\gz)
\lf\{\dint_{\rn}\lf|\rho^{|\gz|}\pat^{\az-\gz}f(x)\r|^2\,dx\r\}^{1/2}
\lf\{\dint_{\rn}\lf|\rho^{|\wz\gz|}
\pat^{\bz-\wz\gz}f(x)\r|^2\,dx\r\}^{1/2}\\
&&\hs=: \Lambda\dsum_{|\az|=|\bz|=k}\dsum_{0\le\gz\le\az,
0\le\wz\gz\le\bz}C(\az,\gz)C(\bz,\wz\gz)\,\mathrm{I}_1
\times\mathrm{I}_2.
\end{eqnarray*}
Applying Plancherel's theorem, \eqref{2.9} and Young's inequality
with $\ez\in(0,\,\frac{\lz}{4 \wz C(k)})$, we obtain that there
exists a positive constant $C(\ez)$ such that for all
$\wz\lz\in(C(\ez)\wz C(k),\,\fz)$,
\begin{eqnarray*}
\lf(\mathrm{I}_1\r)^2&&\le\dint_\rn\lf[\rho^{|\gz|}
|\xi|^{k-|\gz|}\lf|\widehat f(\xi)\r|\r]^2\,d\xi\le
\dint_\rn\lf[C(\ez)\rho^{2k}
+\ez|\xi|^{2k}\r]\lf|\widehat f(\xi)\r|^2\,d\xi\\
&&\le C(\ez)\rho^{2k}\|f\|_{L^2(\rn)}^2 +\ez\|\nabla^k
f\|_{L^2(\rn)}^2 \le
C(\ez)\rho^{2k}\|f\|_{L^2(\rn)}^2+\frac{\ez}{\lz}\Re\mathfrak{a}
(f,\,f),
\end{eqnarray*}
which, together with a similar estimate for $\mathrm{I}_2$, shows
that
\begin{eqnarray}\label{3.13}
\lf|\mathfrak{a}_{\rho\phi}(f,\,f)-\mathfrak{a}(f,\,f)\r|\le
\frac{1}{4}\Re\mathfrak{a}(f,\,f)+\wz\lz\rho^{2k}\|f\|_{L^2(\rn)}^2.
\end{eqnarray}
Denote by $L_{\rho \phi}(=e^{-\rho\phi}L_1e^{\rho\phi})$ the
\emph{operator associated with $\mathfrak{a}_{\rho\phi}$}. Let
$f_t:= e^{-tL_{\rho\phi}}f$. By \eqref{3.13}, we have
\begin{eqnarray*}
\frac{d}{dt}\lf\|f_t\r\|_{L^2(\rn)}^2
&&=-(L_{\rho\phi}f_t,\,f_t)-(f_t,\,L_{\rho\phi}f_t)
=-2\Re \mathfrak{a}_{\rho\phi}(f_t,\,f_t)\\
&&=2\lf[\Re\lf(\mathfrak{a}(f_t,\,f_t)-\mathfrak{a}_{\rho\phi}(f_t,\,f_t)\r)-
\Re\mathfrak{a}(f_t,\,f_t)\r]\\
&&\le2|\mathfrak{a}_{\rho\phi}(f_t,\,f_t)-
\mathfrak{a}(f_t,\,f_t)|-2\Re\mathfrak{a}(f_t,\,f_t)\\
&&\le\frac{1}{2}\Re\mathfrak{a}(f_t,\,f_t)+2\wz\lz\rho^{2k}
\|f_t\|_{L^2(\rn)}^2-2\Re\mathfrak{a}(f_t,\,f_t)
\le2\wz\lz\rho^{2k}\|f_t\|_{L^2(\rn)}^2.
\end{eqnarray*}
Thus,
\begin{eqnarray*}
\|f_t\|_{L^2(\rn)}^2=\|e^{-tL_{\rho\phi}}f\|_{L^2(\rn)}^2
\le\exp\{2\wz\lz\rho^{2k}t\}\|f\|_{L^2(\rn)}^2
\le \exp\lf\{2C(k)h^\sharp(\rho t^{1/(2k)})\r\}
\|f\|^2_{L^2(\rn)}.
\end{eqnarray*}
That is, \eqref{3.11} holds. Therefore, $\{e^{-tL_1}\}_{t>0}$
satisfies the $k$-Davies-Gaffney estimate, which completes the proof
of Proposition \ref{p3.1}.
\end{proof}

\begin{remark}\label{r3.1}
In the proof of Proposition \ref{p3.1}, we obtain the estimate
\eqref{3.11} by following the proof of \cite[Lemma 2]{da2}. The same
method should also work for the proof of \cite[Proposition
3.1]{bk1}. Notice that the scaling method mentioned in the last two
lines of \cite[p.\,143]{bk1} may not be valid when used to remove
the factor $e^{(\az w+\ez)t}$ appearing in the proof of
\cite[Proposition 3.1]{bk1}, as the authors claimed therein.
\end{remark}

We also have the following gradient estimate for $L_1$ and $L_2$.

\begin{proposition}\label{p3.2}
Let $k\in\nn$, $L_1$ be the $2k$-order divergence form elliptic
operator and $L_2$ the $2k$-order Schr\"odinger type operator
defined as in Section \ref{s2}. Then,
$\{\sqrt{t}\nabla^ke^{-tL_i}\}_{t>0}$ for $i\in\{1,\,2\}$ satisfies
the $k$-Davies-Gaffney estimate.
\end{proposition}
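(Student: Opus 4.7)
The plan is to combine the ellipticity of the defining form with the $k$-Davies-Gaffney estimates for the semigroup and its first derivative already established in Lemma \ref{l3.1}. I would treat $i\in\{1,2\}$ in parallel: for $L_1$ I use the ellipticity condition \eqref{2.9}, and for $L_2$ the analogous inequality $\mathfrak{b}(f,f)\ge\|\nabla^kf\|_{L^2(\rn)}^2$, which follows at once from $V^k\ge 0$. Throughout, fix $f\in L^2(\rn)$ supported in $E$, set $d:=\dist(E,F)$, and write $u:=e^{-tL_i}f$.

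A first step handles the regime $t\ge d^{2k}$, where the exponential factor in the target bound is comparable to $1$, so it suffices to prove the uniform estimate $\|\sqrt{t}\,\nabla^k u\|_{L^2(\rn)}\ls\|f\|_{L^2(\rn)}$. Ellipticity yields $\|\nabla^ku\|_{L^2(\rn)}^2\ls\Re(L_iu,u)\le\|L_iu\|_{L^2(\rn)}\|u\|_{L^2(\rn)}$, and Lemma \ref{l3.1} applied with $m=0$ and $m=1$ gives $\|u\|_{L^2(\rn)}\ls\|f\|_{L^2(\rn)}$ and $\|L_iu\|_{L^2(\rn)}\ls t^{-1}\|f\|_{L^2(\rn)}$, which together yield the claim.

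For the decay regime $t<d^{2k}$ I would adapt the Davies perturbation scheme of Proposition \ref{p3.1}. For $\phi\in\mathcal{A}$ and $\rho\ge 0$, the Leibniz formula gives
\begin{eqnarray*}
e^{-\rho\phi}\nabla^ke^{-tL_i}e^{\rho\phi}=\sum_{j=0}^{k}c_j\,\rho^j\,(\nabla\phi)^{\otimes j}\,\nabla^{k-j}\bigl(e^{-tL_{i,\rho\phi}}\bigr),
\end{eqnarray*}
where $L_{i,\rho\phi}:=e^{-\rho\phi}L_ie^{\rho\phi}$ is the twisted operator, whose form $\mathfrak{a}_{\rho\phi}$ obeys the perturbed ellipticity \eqref{3.13}. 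Combining the easy-regime argument above applied to $L_{i,\rho\phi}$ with the bound $\|e^{-tL_{i,\rho\phi}}\|_{\cl(L^2(\rn))}\ls\exp\{C\rho^{2k}t\}$ furnished by Proposition \ref{p3.1} gives, for each $0\le j\le k$,
\begin{eqnarray*}
\|\nabla^{k-j}e^{-tL_{i,\rho\phi}}g\|_{L^2(\rn)}\ls t^{-(k-j)/(2k)}\exp\{C\rho^{2k}t\}\|g\|_{L^2(\rn)}.
\end{eqnarray*}
Summing over $j$ and multiplying by $\sqrt{t}$ collapses the whole expression to $\|e^{-\rho\phi}\sqrt{t}\nabla^ke^{-tL_i}e^{\rho\phi}\|_{\cl(L^2(\rn))}\ls\exp\{C\rho^{2k}t\}$. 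Taking $\phi$ to be a smoothed bounded version of $-\max\{0,d-\dist(\cdot,F)\}$, so that $\phi\in\mathcal{A}$, $\phi\equiv 0$ on $E$ and $\phi\equiv-d$ on $F$, and optimizing $\rho\sim[d/(Ct)^{1/(2k)}]^{1/(2k-1)}$ then produces the $k$-Davies-Gaffney bound.

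The main technical obstacle is controlling $\|\nabla^{k-j}e^{-tL_{i,\rho\phi}}g\|_{L^2(\rn)}$ at the intermediate orders $1\le j\le k-1$, which requires iterating the ellipticity/Caccioppoli argument on the twisted operator at each derivative order while carefully tracking the perturbation parameter $\rho$. For $L_2=(-\bdz)^k+V^k$ this complication is avoidable altogether: differentiating in $x$ the pointwise Gaussian-type heat kernel bound on $e^{-tL_2}(x,y)$ recalled in Section \ref{s2} yields the corresponding bound on the kernel of $\nabla_x^ke^{-tL_2}$, from which the $k$-Davies-Gaffney estimate for $\sqrt{t}\nabla^ke^{-tL_2}$ follows by integration.
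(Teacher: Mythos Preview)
Your proposal has a genuine gap in the $L_2$ case. A pointwise upper bound on the heat kernel $e^{-tL_2}(x,y)$ says nothing about its derivatives: you cannot differentiate an inequality. The Gaussian-type estimate from Section~\ref{s2} bounds only $|e^{-tL_2}(x,y)|$, and with $V$ merely in $L^k_{\loc}(\rn)$ there is no pointwise regularity of $\nabla_x^ke^{-tL_2}(x,y)$ to appeal to, so your proposed shortcut for $L_2$ fails outright.

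Your Davies-perturbation route for $L_1$ is in principle workable but far heavier than needed; the paper dispatches both operators in three lines, uniformly in $i\in\{1,2\}$. Writing $u:=e^{-tL_i}f$, the paper invokes the form inequality to get $\|\sqrt{t}\,\nabla^ku\|_{L^2(F)}^2\ls|(tL_iu,u)_{L^2(F)}|$, applies Cauchy--Schwarz to bound this by $\|tL_ie^{-tL_i}f\|_{L^2(F)}\|e^{-tL_i}f\|_{L^2(F)}$, and observes that each factor already obeys the $k$-Davies--Gaffney estimate by Proposition~\ref{p3.1} and Lemma~\ref{l3.1}. No twisted semigroups, no case split on $t$ versus $d^{2k}$, and no intermediate-order derivatives enter. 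If you nonetheless pursue your approach for $L_1$, the intermediate orders are handled most cleanly by the Gagliardo--Nirenberg interpolation $\|\nabla^{k-j}v\|_{L^2(\rn)}\ls\|\nabla^kv\|_{L^2(\rn)}^{(k-j)/k}\|v\|_{L^2(\rn)}^{j/k}$ (an immediate consequence of Plancherel) rather than by iterating Caccioppoli-type arguments on the twisted operator.
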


\begin{proof}
For any Hilbert space $\mathcal{H}$, let
$(\cdot,\,\cdot)_{\mathcal{H}}$ be the inner product of
$\mathcal{H}$. By H\"older's inequality and the fact that
$\{tL_ie^{-tL_i}\}_{t>0}$ and $\{e^{-tL_i}\}_{t>0}$ satisfy the
$k$-Davies-Gaffney estimate which are deduced respectively from
Proposition \ref{p3.1} and Lemma \ref{l3.1}, we conclude that for
all closed sets $E$, $F\subset\rn$, $f\in L^2(\rn)$ supported in
$E$, and $t\in(0,\,\fz)$,
\begin{eqnarray*}
\lf\|\sqrt{t}\nabla^ke^{-tL_i}f\r\|_{L^2(F)}^2
&&\ls\lf|\lf(tL_ie^{-tL_i}f,\,e^{-tL_i}f\r)_{L^2(F)}\r|\\
&&\ls\lf\|tL_ie^{-tL_i}f\r\|_{L^2(F)}\lf\|e^{-tL_i}f\r\|_{L^2(F)}\\
&&\ls\lf(\exp\lf\{-\frac{[\dist(E,\,F)]^{2k/(2k-1)}}{C_1t^{1/(2k-1)}}\r\}\r)^2
\|f\|_{L^2(E)}^2,
\end{eqnarray*}
which implies that $\{\sqrt{t}\nabla^ke^{-tL_i}\}_{t>0}$ also
satisfies the $k$-Davies-Gaffney estimate. This finishes the proof
of Proposition \ref{p3.2}.
\end{proof}

\section{Molecular characterizations of $H_L^p(\rn)$}\label{s4}

Assume that the operator $L$ satisfies the
assumptions (A$_1$), (A$_2$) and (A$_3$) in Section \ref{s2}. In
this section, we introduce the Hardy space $H_L^p(\rn)$ by means of
the $L$-adapted square function and characterize these Hardy spaces
by the molecular decomposition. First, we recall some notions.

Let
$$\Gamma(x):=\lf\{(y,\,t)\in\rn\times(0,\,\fz):\ |x-y|<t\r\}$$
be the cone with vertex $x\in\rn$. For all $f\in L^2(\rn)$ and
$x\in\rn$, the $L$-adapted square function $S_Lf$ is defined by
\begin{eqnarray}\label{4.1}
S_Lf(x):=\lf\{\iint_{\Gamma(x)}|t^{2k}Le^{-t^{2k}L}f(y)|^2
\frac{dy\,dt}{t^{n+1}}\r\}^{1/2}.
\end{eqnarray}

\begin{definition}\label{d4.1}
Let $p\in(0,\,1]$ and $L$ satisfy the assumptions (A$_1$), (A$_2$)
and (A$_3$) in Section \ref{s2}. A function $f\in L^2(\rn)$ is said
to be in $\mathbb{H}_L^p(\rn)$ if $S_Lf\in L^p(\rn)$; moreover,
define $\|f\|_{H_L^p(\rn)}:=\|S_Lf\|_{L^p(\rn)}.$ The {\it Hardy
space} $H_L^p(\rn)$ is then defined to be the completion of
$\mathbb{H}_L^p(\rn)$ with respect to the quasi-norm
$\|\cdot\|_{H_L^p(\rn)}$.
\end{definition}

\begin{remark}\label{r4.1}
Since both the $2k$-order divergence form homogenous elliptic
operator $L_1$ with complex bounded measurable coefficients and the
$2k$-order Schr\"odinger type operator $L_2$ satisfy the assumptions
(A$_1$), (A$_2$) and (A$_3$), we then define the Hardy spaces
$H_{L_1}^p(\rn)$ and $H_{L_2}^p(\rn)$, respectively, associated with
$L_1$ and $L_2$ as in Definition \ref{d4.1}. In particular, when
$k=1$, $H_{L_1}^p(\rn)$ is just the Hardy space
$H_{-\rm{div}(A\nabla)}^p(\rn)$ associated with the second order
divergence form elliptic operator $-\rm{div}(A\nabla)$ with complex
bounded measurable coefficients in \cite{hm09,hm09c,hmm,jy10} and
$H_{L_2}^1(\rn)$ appears in \cite{dz1,dz2,hlmmy,yz}; when $k=2$,
$H_{L_2}^1(\rn)$ was also studied in \cite{cly}.
\end{remark}

In what follows, a {\it cube} always means a closed cube whose sides
are parallel to the coordinate axes. Let $Q\subset\rn$ be a cube
with the side length $l(Q)$. For $i\in\zz_+$, denote by $S_i(Q)$ the
{\it dyadic annuli} based on $Q$, namely,
$S_0(Q):= Q$ and $S_i(Q):={2^i}Q\setminus{(2^{i-1}Q)}$ for $i\in\nn$,
where $2^iQ$ is the cube with the same center as $Q$ and the side
length $2^il(Q)$.

\begin{definition}\label{d4.2}
Let $p\in(0,\,1]$, $\ez\in(0,\,\fz)$, $M\in\nn$ and $L$ satisfy the
assumptions (A$_1$), (A$_2$) and (A$_3$) in Section \ref{s2}. A
function $m\in L^2(\rn)$ is called an $(H_L^p,\,\ez,\,M)$-{\it
molecule} if there exists a cube $Q\subset\rn$ such that
\begin{enumerate}
\item[(i)] for each $\ell\in\{1,\,\cdots,\,M\}$, $m$ belongs to the range of $L^\ell$ in $L^2(\rn)$;
\item[(ii)] for all $i\in\zz_+$ and $\ell\in\{0,\,1,\,\cdots,\,M\}$,
\begin{eqnarray}\label{4.2}
\lf\|\lf([l(Q)]^{-2k}L^{-1}\r)^\ell m\r\|_{L^2(S_i(Q))}
\le[2^il(Q)]^{n(\frac{1}{2}-\frac{1}{p})}2^{-i\ez}.
\end{eqnarray}
\end{enumerate}

Assume that $\{m_j\}_{j=0}^\fz$ is a sequence of
$(H_L^p,\,\ez,\,M)$-molecules and $\{\lz_j\}_{j=0}^\fz\in l^p$. For
any $f\in L^2(\rn)$, if $f=\sum_{j=0}^\fz \lz_jm_j$ in $L^2(\rn)$,
then $\sum_{j=0}^\fz \lz_jm_j$ is called a {\it molecular}
$(H_L^p,\,2,\,\ez,\,M)$-{\it representation} of $f$.
\end{definition}

We now introduce the notion of a molecular Hardy space
$H_{L,\,\mol,\,M}^p(\rn)$ generated by
$(H_L^p,\,\ez,\,M)$-molecules.

\begin{definition}\label{d4.3}
Let $p\in(0,\,1]$, $\ez\in(0,\,\fz)$, $M\in\nn$ and $L$ satisfy the
assumptions (A$_1$), (A$_2$) and (A$_3$) in Section \ref{s2}. The
{\it molecular Hardy space $H_{L,\,\mol,\,M}^p(\rn)$} is defined to
be the completion of the space
$$\mathbb{H}_{L,\,\mol,\,M}^p(\rn):=\{f:\ f\ \text{ has a molecular}\
(H_L^p,\,2,\,\ez,\,M)-\text{representation}\}$$  with respect to
the \emph{quasi-norm}
\begin{eqnarray*}
\|f\|_{H_{L,\,\mol,\,M}^p(\rn)} :=&&\inf\lf\{\lf(\dsum_{j=0
}^\fz|\lz_j|^p\r)^{1/p}:\
f=\dsum_{j=0}^\fz\lz_jm_j \ \text{is a molecular}\r.\\
&&\hspace{2.8cm}(H_L^p,\, 2,\,\ez,\,M)-\text{representation}\Bigg\},
\end{eqnarray*}
where the infimum is taken over all the molecular $(H_L^p,\,2,\,
\ez,\,M)$-representations of $f$ as above.
\end{definition}

Now, we establish the molecular characterization of the Hardy space
$H_L^p(\rn)$.

\begin{theorem}\label{t4.1}
Let $p\in(0,\,1]$, $\ez\in(0,\,\fz)$, $L$ satisfy the assumptions
{\rm(A$_1)$}, {\rm(A$_2)$} and {\rm(A$_3)$} in Section \ref{s2} and
$M\in\nn$ such that $M>\frac{n}{2k}(\frac{1}{p}-\frac{1}{2})$. Then,
$H_L^p(\rn)= H_{L,\,\mol,\,M}^p(\rn)$ with equivalent norms.
\end{theorem}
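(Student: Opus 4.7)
The plan is to prove the two inclusions $H_{L,\mol,M}^p(\rn)\subset H_L^p(\rn)$ and $H_L^p(\rn)\subset H_{L,\mol,M}^p(\rn)$ separately, following the strategy sketched in the introduction.

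\textbf{Step 1.} For the inclusion $H_{L,\mol,M}^p(\rn)\subset H_L^p(\rn)$, the core task is to show that
\begin{eqnarray*}
\|S_L(m)\|_{L^p(\rn)}\le C
\end{eqnarray*}
uniformly in every $(H_L^p,\ez,M)$-molecule $m$ adapted to a cube $Q$. Once this is in hand, the Hofmann--Mayboroda--McIntosh boundedness criterion (Lemma 4.1) upgrades the estimate to a bounded sublinear extension of $S_L$ from $H_{L,\mol,M}^p(\rn)$ into $L^p(\rn)$ and yields the inclusion with the appropriate norm inequality. To verify the uniform bound, I decompose $\rn=\bigcup_{i\ge 0}S_i(Q)$ and control $\int_{S_i(Q)}[S_L(m)(x)]^p\,dx$ separately on each annulus. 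For $i\in\{0,1,2,3\}$, H\"older's inequality, the $L^2$-boundedness of $S_L$ (a quadratic-estimate consequence of (A$_2$)) and the $\ell=0$ molecular estimate in \eqref{4.2} do the job. For $i\ge 4$, I use the molecular factorization $m=([l(Q)]^{2k}L)^Mb_M$ with $b_M:=([l(Q)]^{-2k}L^{-1})^Mm$ and split the cone integral defining $S_L(m)(x)$ into the regions $\{t<2^{i-3}l(Q)\}$ and $\{t\ge 2^{i-3}l(Q)\}$. On the first region, Lemma \ref{l3.1} applied to $(t^{2k}L)^{M+1}e^{-t^{2k}L}$ supplies the $k$-Davies--Gaffney decay $\exp\{-[2^il(Q)]^{2k/(2k-1)}/(Ct^{1/(2k-1)})\}$ in the distance between $S_i(Q)$ and $Q$; on the second region, the $L^2$ operator-norm bound for $(t^{2k}L)^{M+1}e^{-t^{2k}L}$ combined with the scaling factor $([l(Q)]/t)^{2kM}$ arising from the factorization through $b_M$ provides the decay. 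Summation in $i$ to the $p$th power is then legitimate under the hypothesis $M>n(1/p-1/2)/(2k)$.

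\textbf{Step 2.} For the reverse inclusion, fix $f\in\mathbb{H}_L^p(\rn)$. Since $L$ has a bounded $H_\fz$ calculus by (A$_2$), the Calder\'on reproducing formula
\begin{eqnarray*}
f=c_M\dint_0^\fz(t^{2k}L)^{M+1}e^{-t^{2k}L}\bigl(t^{2k}Le^{-t^{2k}L}f\bigr)\,\dfrac{dt}{t}
\end{eqnarray*}
holds in $L^2(\rn)$ for a suitable constant $c_M$. Setting $F(y,t):=t^{2k}Le^{-t^{2k}L}f(y)$, one has $\|F\|_{T^p(\rr^{n+1}_+)}\sim\|S_Lf\|_{L^p(\rn)}=\|f\|_{H_L^p(\rn)}$. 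I then apply the tent-space atomic decomposition of Coifman--Meyer--Stein to write $F=\dsum_j\lz_jA_j$ in $T^p\cap T^2$, where each $A_j$ is a $T^p$-atom supported in a tent $\wt Q_j$ and $(\dsum_j|\lz_j|^p)^{1/p}\ls\|F\|_{T^p(\rr^{n+1}_+)}$. Substituting this decomposition into the reproducing formula and exploiting the $L^2$-boundedness of $\pi_{M,L}$ (again a quadratic-estimate consequence of (A$_2$)), I obtain $f=\dsum_j\lz_j\pi_{M,L}(A_j)$ in $L^2(\rn)$. By Lemma \ref{l4.2} (whose proof is itself an application of the $k$-Davies--Gaffney estimate of Lemma \ref{l3.1}), each $\pi_{M,L}(A_j)$ is, up to a harmless multiplicative constant, an $(H_L^p,\ez,M)$-molecule adapted to the base of the tent $\wt Q_j$. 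Consequently the expression above is a molecular $(H_L^p,2,\ez,M)$-representation of $f$, so
\begin{eqnarray*}
\|f\|_{H_{L,\mol,M}^p(\rn)}\ls\Bigl(\dsum_j|\lz_j|^p\Bigr)^{1/p}\ls\|f\|_{H_L^p(\rn)}.
\end{eqnarray*}
Since $\mathbb{H}_L^p(\rn)$ is dense in $H_L^p(\rn)$ by definition, a standard density argument extends the inclusion to all of $H_L^p(\rn)$.

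\textbf{Main difficulty.} The principal technical obstacle is the molecule estimate on the far annuli in Step 1. One has to play the $k$-Davies--Gaffney decay of $(t^{2k}L)^{M+1}e^{-t^{2k}L}$, which is strong only when $t$ is small compared with $2^il(Q)$, against the cancellation built into the molecular factorization $m=([l(Q)]^{2k}L)^Mb_M$, which is needed to tame the tail $t\ge 2^{i-3}l(Q)$ through the factor $([l(Q)]/t)^{2kM}$. Gluing the two regimes into a single power-type decay in $2^i$ that is $p$-summable is exactly what forces the constraint $M>n(1/p-1/2)/(2k)$. Moreover, the $k$-dependent scaling $t^{2k}$ in place of the classical $t^2$ and the stretched-exponential profile $\exp\{-cd^{2k/(2k-1)}/t^{1/(2k-1)}\}$ make the $t$-integrals more delicate than in the $k=1$ case and require careful tracking of the exponents $2k$ and $2k-1$ throughout.
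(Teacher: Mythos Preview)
Your overall strategy matches the paper's, and Step~2 is essentially identical to the paper's argument. There is, however, a genuine gap in Step~1, in the small-$t$ portion of the cone integral on the far annuli.

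You invoke the factorization $m=([l(Q)]^{2k}L)^Mb_M$ in \emph{both} $t$-regions and then claim Davies--Gaffney decay ``in the distance between $S_i(Q)$ and $Q$''. But neither $m$ nor $b_M$ is supported in $Q$: both live on every annulus $S_j(Q)$ with the sizes prescribed by \eqref{4.2}. You must therefore split the input into a piece near $S_i(Q)$ and pieces far from it. For the portion of $b_M$ sitting near $S_i(Q)$ there is no spatial separation and hence no Davies--Gaffney gain; with the factorization still in force you are left with the prefactor $(l(Q)/t)^{4kM}$ against $dt/t$ on $(0,2^{i-3}l(Q))$, and this integral diverges at $t=0$. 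Neither the uniform $L^2$ bound nor the quadratic estimate for $(t^{2k}L)^{M+1}e^{-t^{2k}L}$ can absorb that blow-up.

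The paper's remedy is to reserve the factorization for the large-$t$ region only, where $(l(Q)/t)^{2kM}$ is favourable. In the small-$t$ region it works with $m$ directly, writes $m=m\chi_{2^{j-c}Q}+m\chi_{\widehat S_j(Q)}+m\chi_{(2^{j+c}Q)^\complement}$, applies the $k$-Davies--Gaffney estimate for $t^{2k}Le^{-t^{2k}L}$ to the two far pieces, and for the near piece uses the quadratic estimate \eqref{4.3} together with the annular molecular decay $\|m\|_{L^2(\widehat S_j(Q))}\ls 2^{-j\ez}[2^jl(Q)]^{n(1/2-1/p)}$ to produce a $j$-summable bound. Note also that the paper places the split at $t=2^{\tz(j-5)}l(Q)$ with $\tz\in(0,1)$ close to $1$, rather than at $2^{j-3}l(Q)$; this is needed so that the far pieces in the small-$t$ region, treated \emph{without} the factorization, still inherit $j$-decay from the exponential at the upper limit of integration.
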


To prove Theorem \ref{t4.1}, by Definitions \ref{d4.1} and
\ref{d4.3}, it suffices to prove that
\begin{eqnarray*}
\mathbb{H}_L^p(\rn)=\mathbb{H}_{L,\,\mol,\,M}^p(\rn),\ \
M>\frac{n}{2k}\lf(\frac{1}{p}-\frac{1}{2}\r),
\end{eqnarray*}
with equivalent norms. We divide the proof into two parts: (i)
$\mathbb{H}_{L,\,\mol,\,M}^p(\rn)\subset \mathbb{H}_L^p(\rn)$; (ii)
$\mathbb{H}_L^p(\rn)\subset \mathbb{H}_{L,\,\mol,\,M}^p(\rn)$.

To prove the inclusion $\mathbb{H}_{L,\,\mol,\,M}^p(\rn)\subset
\mathbb{H}_L^p(\rn)$, we need the following key lemma which is just
\cite[Lemma 3.8]{hmm}. Recall that a {\it nonnegative sublinear
operator} $T$ means that $T$ is sublinear and $Tf\ge 0$ for all $f$
in the domain of $T$.

\begin{lemma}\label{l4.1}
Let $p\in(0,\,1]$, $M\in\nn$ and $T$ be a linear operator, or a
nonnegative sublinear operator, which is of weak type $(2,\,2)$,
that is, there exists a positive constant $C$ such that for all
$\eta\in(0,\,\fz)$ and $f\in L^2(\rn)$,
$$\lf|\{x\in\rn:\ |Tf(x)|>\eta\}\r|\le C\eta^{-2}\|f\|_{L^2(\rn)
}^2.$$ Assume that there exists a positive constant $C$ such that
for all $(H_L^p,\,\ez,\,M)$-molecules $m$, $\|Tm\|_{L^p(\rn)}\le C$.
Then the operator $T$ is bounded from $H_{L,\,\mol,\,M}^p(\rn)$ to
$L^p(\rn)$.
\end{lemma}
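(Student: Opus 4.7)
The plan is to start from an arbitrary molecular representation $f=\sum_{j=0}^{\fz}\lz_jm_j$ in $L^2(\rn)$, with $\lf(\sum_j|\lz_j|^p\r)^{1/p}$ arbitrarily close to $\|f\|_{H_{L,\,\mol,\,M}^p(\rn)}$, and to transfer control from the bound on $\|Tm_j\|_{L^p(\rn)}$ to a bound on $\|Tf\|_{L^p(\rn)}$ via two classical ingredients: the subadditivity inequality $\lf(\sum a_j\r)^p\le\sum a_j^p$ valid for $a_j\ge0$ and $p\in(0,1]$, and the passage to a limit in Fatou's lemma justified by the weak type $(2,\,2)$ bound on $T$.

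More concretely, denote $S_Nf:=\sum_{j=0}^{N}\lz_jm_j$. By the (sub)linearity of $T$,
$$|T(S_Nf)(x)|\le\dsum_{j=0}^{N}|\lz_j|\,|Tm_j(x)|\ \ \text{for almost every}\ x\in\rn.$$
Raising this to the $p$-th power, integrating, using the subadditivity inequality above (valid pointwise in $x$), and invoking the hypothesis $\|Tm_j\|_{L^p(\rn)}\le C$, I would obtain
$$\|T(S_Nf)\|_{L^p(\rn)}^p\le\dsum_{j=0}^{N}|\lz_j|^p\,\|Tm_j\|_{L^p(\rn)}^p\ls\dsum_{j=0}^{N}|\lz_j|^p.$$

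Next, I would pass from $S_Nf$ to $f$. Since $S_Nf\to f$ in $L^2(\rn)$ and $T$ is of weak type $(2,\,2)$, for any $\eta\in(0,\fz)$, sublinearity yields $||Tf|-|T(S_Nf)||\le|T(f-S_Nf)|$ (pointwise), and hence
$$|\{x\in\rn:\,||Tf(x)|-|T(S_Nf)(x)||>\eta\}|\ls\eta^{-2}\|f-S_Nf\|_{L^2(\rn)}^2\to 0.$$
Thus $T(S_Nf)\to Tf$ in measure, so some subsequence converges almost everywhere; Fatou's lemma then gives
$$\|Tf\|_{L^p(\rn)}^p\le\liminf_{N\to\fz}\|T(S_Nf)\|_{L^p(\rn)}^p\ls\dsum_{j=0}^{\fz}|\lz_j|^p.$$
Taking the infimum over all molecular $(H_L^p,2,\ez,M)$-representations of $f$ gives $\|Tf\|_{L^p(\rn)}\ls\|f\|_{H_{L,\,\mol,\,M}^p(\rn)}$ on the dense subspace $\mathbb{H}_{L,\,\mol,\,M}^p(\rn)$, after which the bound extends to the completion by density.

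The only genuine subtlety is the exchange of the operator $T$ with the infinite sum; since $T$ is merely weak type $(2,\,2)$ and not assumed strong type from $L^2(\rn)$ to $L^p(\rn)$, one cannot simply invoke continuity. The device above---truncating at $N$, using subadditivity to pull $T$ inside the finite sum, and then using the weak $L^2$ bound to secure convergence in measure for the passage $N\to\fz$---is the main obstacle, but it is handled cleanly by Fatou's lemma as indicated. All remaining steps are routine applications of the molecular structure hypothesis and the elementary inequality for $p$-th powers.
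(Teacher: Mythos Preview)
The paper does not give its own proof of this lemma; it simply quotes it as \cite[Lemma 3.8]{hmm}. Your argument is correct and is essentially the standard proof of such a statement: bound $\|T(S_Nf)\|_{L^p(\rn)}^p$ by $\sum_{j\le N}|\lz_j|^p$ using sublinearity and the elementary inequality $(\sum a_j)^p\le\sum a_j^p$, then use the weak type $(2,2)$ bound to obtain convergence in measure of $|T(S_Nf)|$ to $|Tf|$, pass to an a.e.\ convergent subsequence, and apply Fatou's lemma. The pointwise inequality $\bigl||Tf|-|T(S_Nf)|\bigr|\le|T(f-S_Nf)|$ that you invoke holds in both cases (for linear $T$ by the triangle inequality, for nonnegative sublinear $T$ by subadditivity and absolute homogeneity), so the convergence-in-measure step is justified. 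The extension to the completion $H_{L,\,\mol,\,M}^p(\rn)$ by density is the standard device for quasi-Banach spaces and is how the action of $T$ on the full space is to be understood.
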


\begin{proof}[Proof of Theorem \ref{t4.1}: the inclusion
$\mathbb{H}_{L,\,\mol,\,M}^p(\rn) \subset\mathbb{H}_L^p(\rn)$.]
Recall that $L$ is a one-to-one operator of type $\omega$ having a
bounded $H_\fz$ functional calculus. For all $x\in\rn$,
$\psi\in\Psi(S_\mu^0)$ defined as in Section \ref{s2}, set
$\psi_t(x):= \psi(tx)$ for all $t\in (0,\fz)$. The {\it quadratic
norm} $\|g\|_{T,\psi}$, associated with the operator $L$ in
$L^2(\rn)$ and $\psi$, is defined by
$$\|g\|_{T,\psi}:=\lf\{\dint_0^\fz\|\psi_t(T)g\|^2_{L^2(\rn)}\,
\frac{dt}{t}\r\}^{1/2}$$ for all $g\in [L^2(\rn)]_{T,\psi}$ which is
a {\it subspace of $L^2(\rn)$ such that the above integral is
finite}. Since $L$ has a bounded $H_\fz$ functional calculus on
$L^2(\rn)$, it follows from \cite{adm1} that
$[L^2(\rn)]_{T,\psi}=L^2(\rn)$ and for all $g\in L^2(\rn)$,
\begin{eqnarray}\label{4.3}
\|g\|_{T,\psi}\ls\|g\|_{L^2(\rn)}.
\end{eqnarray}
By Fubini's theorem, we have that $\|S_L
g\|_{L^2(\rn)}\sim\|g\|_{T,\psi_0}$, where $\psi_0(z):=
ze^{-z}\in\Psi(S_{\mu}^0)$ for all $\mu\in(0,\,\pi/2)$. Thus, $S_L$
is bounded on $L^2(\rn)$. By Lemma \ref{l4.1}, to prove the
inclusion
$\mathbb{H}_{L,\,\mol,\,M}^p(\rn)\subset\mathbb{H}_L^p(\rn)$, it
suffices to prove that for all $(H_L^p,\,\ez,\,M)$-molecules $m$
with $M>\frac{n}{2k}(\frac{1}{p}-\frac{1}{2})$,
\begin{eqnarray}\label{4.4}
\|S_Lm\|_{L^p(\rn)}\ls1.
\end{eqnarray}
Let $Q$ be the cube associated with $m$ as in Definition
\ref{d4.2}. Let $j_0\in\nn$ be such that $2^{j_0-1}<\sqrt n\le
2^{j_0}$. By Minkowski's inequality, H\"older's inequality and the
$L^2(\rn)$-boundedness of $S_L$, we see that
\begin{eqnarray*}
\lf\|S_Lm\r\|_{L^p(\rn)}&&\le\|S_Lm\|_{L^p(2^{j_0+4}Q)}+
\dsum_{j=j_0+5}^\fz\lf\|S_Lm\r\|_{L^p(S_j(Q))}\\
&&\ls\|m\|_{L^2(\rn)}|2^{j_0+4}Q|^{\frac{1}{p}-\frac{1}{2}}
+\dsum_{j=j_0+5}^\fz\|S_Lm\|_{L^2(S_j(Q))}
|S_j(Q)|^{\frac{1}{p}-\frac{1}{2}}.
\end{eqnarray*}
For $\|m\|_{L^2(\rn)}$, from Minkowski's inequality and the size
condition \eqref{4.2} of $m$, it follows that
\begin{eqnarray}\label{4.5}
\|m\|_{L^2(\rn)}&&\le\dsum_{j=0}^\fz\|m\|_{L^2(S_j(Q))}
\le\dsum_{j=0}^\fz[2^jl(Q)]^{n(\frac{1}{2} -\frac{1}{p})}2^{-j\ez}
\ls[l(Q)]^{n(\frac{1}{2}-\frac{1}{p})}.
\end{eqnarray}
For $j\in\{j_0+5,\cdots\}$, let
$\mathrm{I}_j:=\|S_Lm\|_{L^2(S_j(Q))}$. Then,
\begin{eqnarray*}
(\mathrm{I}_j)^2
&&=\dint_{S_j(Q)}|S_Lm|^2\,dx=\dint_{S_j(Q)}\dint_0^\fz
\dint_{|y-x|<t}|t^{2k}Le^{-t^{2k}L}m(y)|^2\frac{dy\,dt}{t^{n+1}}\,dx\\
&&=\dint_{S_j(Q)}\dint_0^{2^{\tz(j-5)}l(Q)}
\dint_{|y-x|<t}|t^{2k}Le^{-t^{2k}L}m(y)|^2\frac{dy\,dt}{t^{n+1}}\,dx
+ \dint_{S_j(Q)}\dint_{2^{\tz(j-5)}l(Q)}^\fz
\dint_{|y-x|<t}\cdots\\
&&=:\mathrm{B}_j+\mathrm{D}_j,
\end{eqnarray*}
where $\tz\in(0,\,1)$ is determined later.

For $\mathrm{D}_j$, let $b:= L^{-M}m$. By Fubini's theorem, Lemma
\ref{l3.1} and the size condition \eqref{4.2} of $m$, we conclude
that
\begin{eqnarray*}
\mathrm{D}_j&&=\dint_{S_j(Q)}\dint_{2^{\tz(j-5)}l(Q)}^\fz
\dint_{|y-x|<t}\lf|t^{2k}Le^{-t^{2k}L}L^Mb(y)\r|^2\frac{dy\,dt}{t^{n+1}}\,dx\\
&&=\dint_{S_j(Q)}\dint_{2^{\tz(j-5)}l(Q)}^\fz
\dint_{|y-x|<t}\lf|t^{2k(M+1)}L^{M+1}e^{-t^{2k}L}b(y)\r|^2
\frac{dy\,dt}{t^{n+1+4kM}}\,dx\\
&&\ls\dint_{2^{\tz(j-5)}l(Q)}^\fz\lf\|t^{2k(M+1)}L^{M+1}e^{-t^{2k}L}b
\r\|_{L^2(\rn)}^2
\frac{dt}{t^{4kM+1}}\ls\dint_{2^{\tz(j-5)}l(Q)}^\fz\|b\|_{L^2(\rn)}^2
\frac{dt}{t^{4kM+1}}\\
&&\sim\|b\|_{L^2(\rn)}^2\lf[2^{\tz(j-5)}l(Q)\r]^{-4kM}
\sim\lf[\dsum_{i=0}^\fz\|b\|^2_{L^2(S_i(Q))}\r]
\lf[2^{\tz(j-5)l(Q)}\r]^{-4kM}\\
&&\ls[l(Q)]^{4kM+2n(\frac{1}{2}-\frac{1}{p})}
\lf[2^{\tz(j-5)}l(Q)\r]^{-4kM}
\sim2^{-j[4kM\tz+n(1-\frac{2}{p})]}\lf[2^jl(Q)\r]^{n(1-\frac{2}{p})}.
\end{eqnarray*}
Recall that $M>\frac{n}{2k}(1/p-1/2)$.  Letting $\tz$ be
sufficiently close to $1$ such that $\az_0:=2kM\tz-n(1/p-1/2)>0$, we
then obtain
\begin{eqnarray}\label{4.6}
\mathrm{D}_j\ls2^{-2j\az_0}\lf|S_j(Q)\r|^{1-\frac{2}{p}}.
\end{eqnarray}
To estimate $\mathrm{B}_j$, let $\wz S_j(Q):=2^{j+j_0+1}Q\setminus
(2^{j-j_0-2}Q)$ and $\widehat
S_j(Q):=2^{j+j_0+2}Q\setminus(2^{j-j_0-3}Q)$. By Fubini's theorem,
we see that
\begin{eqnarray*}
\mathrm{B}_j &&\ls\dint_0^{2^{\tz(j-5)}l(Q)}
\dint_{\wz S_j(Q)}\lf|t^{2k}Le^{-t^{2k}L}m(y)\r|^2\frac{dy\,dt}{t}\\
&&\ls \dint_0^{2^{\tz(j-5)}l(Q)}\dint_{\wz
S_j(Q)}\lf|t^{2k}Le^{-t^{2k}L}
\lf(\chi_{2^{j-j_0-3}Q}m\r)(y)\r|^2\frac{dy\,dt}{t}\\
&&\hs+ \dint_0^{2^{\tz(j-5)}l(Q)}\dint_{\wz
S_j(Q)}\lf|t^{2k}Le^{-t^{2k}L}
\lf(\chi_{\widehat S_j(Q)}m\r)(y)\r|^2\frac{dy\,dt}{t}\\
&&\hs+ \dint_0^{2^{\tz(j-5)}l(Q)}\dint_{\wz
S_j(Q)}\lf|t^{2k}Le^{-t^{2k}L} \lf(\chi_{\rn\setminus
2^{j+j_0+2}Q}m\r)(y)\r|^2\frac{dy\,dt}{t}\\
&&=:\mathrm{B}_{j,1}+\mathrm{B}_{j,2}+\mathrm{B}_{j,3}.
\end{eqnarray*}
From the $k$-Davies-Gaffney estimate, \eqref{4.5} and choosing
$\az\in(2n(1/p-1/2)/(1-\tz),\,\fz)$, we infer that
\begin{eqnarray*}
\mathrm{B}_{j,1}+\mathrm{B}_{j,3}&&\ls\dint_0^{2^{\tz(j-5)}l(Q)}
\exp\lf\{-\wz
C\lf[\frac{2^jl(Q)}{t}\r]^{2k/(2k-1)}\r\}\|m\|_{L^2(\rn)}^2
\,\frac{dt}{t}\\
&&\ls\|m\|_{L^2(\rn)}^2\dint_0^{2^{\tz(j-5)}l(Q)}\lf[\frac{t}{2^jl(Q)}
\r]^\az\,\frac{dt}{t}\sim[l(Q)]^{2n(\frac{1}{2}-\frac{1}{p})}
\lf[2^{\tz(j-5)-j}\r]^\az,
\end{eqnarray*}
where $\wz C$ denotes a positive constant. Let
$\az_1:=(1-\tz)\az/2-n(1/p-1/2)$. Then $\az_1\in(0,\fz)$ and we
have
\begin{eqnarray}\label{4.7}
\mathrm{B}_{j,1}+\mathrm{B}_{j,3}\ls\lf[2^jl(Q)\r]^{2n(1/2-1/p)}
2^{-2j\az_1}.
\end{eqnarray}
Finally, by \eqref{4.3} and the size condition \eqref{4.2} of $m$,
we conclude that
\begin{eqnarray*}
\mathrm{B}_{j,2}\ls\|m\|_{L^2(\widehat S_j(Q))}^2\ls
\dsum_{\ell=j-j_0-2}^{j+j_0+2}\|m\|^2_{L^2(S_\ell(Q))}\ls2^{-2j\ez}
\lf[2^jl(Q)\r]^{2n(1/2-1/p)},
\end{eqnarray*}
which, together with \eqref{4.6} and \eqref{4.7}, shows that there
exists a positive constant $\az_2:=\min\{\az_0,\,\az_1,\,\ez\}$ such
that for all $j\in\{j_0+5,\,\cdots\}$,
\begin{eqnarray}\label{4.8}
\mathrm{I}_j\ls[2^jl(Q)]^{n(1/2-1/p)}2^{-j\az_2}\sim|S_j(Q)|^{1/2-1/p}2^{-j\az_2}.
\end{eqnarray}
Combining \eqref{4.5} and \eqref{4.8}, we see that
\begin{eqnarray*}
\|S_Lm\|_{L^p(\rn)}\ls\lf[l(Q)\r]^{n(\frac{1}{2}-\frac{1}{p})}
|2^{j_0+4}Q|^{\frac{1}{p} -\frac{1}{2}}+\dsum_{j=j_0+5}^\fz
2^{-j\az_2}\ls1,
\end{eqnarray*}
from which we deduce \eqref{4.4}. Thus, the inclusion
$\mathbb{H}_{L,\,\mol,\,M}^p(\rn)\subset\mathbb{H}_L^p(\rn)$ holds,
which completes the proof of part one of Theorem \ref{t4.1}.
\end{proof}

Now, we prove the inclusion $\mathbb{H}_L^p(\rn)\subset
\mathbb{H}_{L,\,\mol,\,M}^p(\rn)$. To this end, we need to use some
results concerning the tent space from \cite{cms}. Let $F$ be a
function on $\mathbb{R}^{n+1}_+:=\rn\times(0,\fz)$. The
$\mathcal{A}$-{\it functional} of $F$ is defined by setting, for all
$x\in\rn$,
$$\mathcal{A}(F)(x):=\lf\{\iint_{\Gamma(x)}\lf|F(y,\,t)\r|^2
\frac{dy\,dt}{t^{n+1}}\r\}^{\frac{1}{2}}.$$ For $p\in(0,\,\fz)$, the
{\it tent space} $T^p(\mathbb{R}^{n+1}_+)$ is defined by
$$T^p(\mathbb{R}^{n+1}_+):=\lf\{F:\ \mathbb{R}^{n+1}_+\rightarrow
\cc: \ \|F\|_{T^p(\mathbb{R}^{n+1}_+)}:=\|\mathcal{A}(F)
\|_{L^p(\rn)}<\fz \r\}.$$ For any cube $Q$, denote by $R_Q:=
Q\times(0,\,l(Q))$ the {\it Carleson box of Q}. A measurable
function $A$ on $\mathbb{R}^{n+1}_+$ is called a
$T^p(\mathbb{R}_+^{n+1})$-{\it atom associated with $Q$} with
$p\in(0,\,1]$, if $A$ satisfies the following properties:
\begin{eqnarray}\label{4.9}
\supp A\subset R_Q
\end{eqnarray}
and
\begin{eqnarray}\label{4.10}
\lf\{\iint_{R_Q}|A(x,\,t)|^2\frac{dx\,dt}{t}\r\}^{1/2}\le
\lf|Q\r|^{\frac{1}{2}-\frac{1}{p}}.
\end{eqnarray}

For the tent space $T^p(\mathbb{R}^{n+1}_+)$ with $p\in(0,\,1]$, we
have the following atomic decomposition from \cite{cms} (see also
\cite[Proposition 3.25]{hmm}).

\begin{theorem}[\cite{cms}]\label{t4.2}
Let $p\in(0,\,1]$. For all $F\in T^p(\mathbb{R}^{n+1}_+)$, there
exist a numerical sequence $\{\lz_j\}_{j=0}^\fz$ and a sequence
$\{A_j\}_{j=0}^\fz$ of $T^p(\mathbb{R}^{n+1}_+)$-atoms such that for
almost every $(x,\,t)\in \mathbb{R}^{n+1}_+$,
$$F(x,\,t)=\dsum_{j=0}^\fz\lz_jA_j(x,\,t).$$
Moreover,
$$\dsum_{j=0}^\fz|\lz_j|^p\sim\|F\|_{T^p(\mathbb{R}^{n+1}_+)}^p,$$
where the implicit equivalent positive constants depend only on $n$.
Finally, if $F\in T^p(\mathbb{R}^{n+1}_+)\cap
T^2(\mathbb{R}^{n+1}_+),$ then the decomposition also converges in
$T^2(\mathbb{R}^{n+1}_+)$.
\end{theorem}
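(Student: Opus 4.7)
The plan is to adapt the classical Coifman--Meyer--Stein argument based on a stopping-time decomposition along the level sets of the $\mathcal{A}$-functional. First, given $F\in T^p(\mathbb{R}^{n+1}_+)$, the hypothesis forces $\mathcal{A}(F)\in L^p(\rn)$, so for each $k\in\zz$ the open set $O_k:=\{x\in\rn:\mathcal{A}(F)(x)>2^k\}$ has finite measure. I would fix $\gamma\in(0,1/2)$ and introduce the enlargement $O_k^\ast:=\{x\in\rn:\mathcal{M}(\chi_{O_k})(x)>\gamma\}$, where $\mathcal{M}$ is the Hardy--Littlewood maximal operator; the weak-type $(1,1)$ bound then gives $|O_k^\ast|\ls|O_k|$ with constant independent of $k$. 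Denote by $\widehat{O}_k^\ast:=\{(y,t)\in\mathbb{R}^{n+1}_+:B(y,t)\subset O_k^\ast\}$ the tent over $O_k^\ast$, and decompose
\[
F=\dsum_{k\in\zz}F\,\chi_{\widehat{O}_k^\ast\setminus\widehat{O}_{k+1}^\ast}\quad\text{almost everywhere on }\mathbb{R}^{n+1}_+,
\]
which is justified since any $(y,t)\in\mathop{\mathrm{supp}}F$ contributes to $\mathcal{A}(F)(x)$ for $x\in B(y,t)$, forcing $B(y,t)$ to lie in some $O_k^\ast$.

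Next, for each $k$ I would invoke a Whitney decomposition of $O_k^\ast$ into a family $\{Q_{k,j}\}_j$ of pairwise-disjoint cubes with comparable sidelengths and distances to $(O_k^\ast)^\complement$, bounded overlap of fixed dilates, and $\dsum_j|Q_{k,j}|\sim|O_k^\ast|$. For a suitable geometric constant $c$, the slice
\[
\Delta_{k,j}:=(\widehat{O}_k^\ast\setminus\widehat{O}_{k+1}^\ast)\cap(Q_{k,j}\times(0,\fz))
\]
is contained in the Carleson box $R_{cQ_{k,j}}$. I would then set $\lambda_{k,j}:=C\,2^{k+1}|Q_{k,j}|^{1/p}$ and $A_{k,j}:=\lambda_{k,j}^{-1}F\,\chi_{\Delta_{k,j}}$ for an appropriately large geometric $C$, making the support condition \eqref{4.9} immediate.

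The crux is the size condition \eqref{4.10}. The key ingredient is the Fubini-type reverse estimate
\[
\iint_{(\widehat{O}_{k+1}^\ast)^\complement\cap(cQ_{k,j}\times(0,\fz))}|F(y,t)|^2\,\frac{dy\,dt}{t}\ls\dint_{cQ_{k,j}\setminus O_{k+1}}[\mathcal{A}(F)(x)]^2\,dx,
\]
obtained by writing $|F(y,t)|^2=|B(y,t)|^{-1}\dint_{B(y,t)}|F(y,t)|^2\,dx$, switching the order of integration, and using that off $\widehat{O}_{k+1}^\ast$ a fixed proportion of $B(y,t)$ lies outside $O_{k+1}$ thanks to the choice of $\gamma$. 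Since $\mathcal{A}(F)\le 2^{k+1}$ on $O_{k+1}^\complement$, this yields $\iint_{\Delta_{k,j}}|F|^2\,dy\,dt/t\ls 2^{2(k+1)}|Q_{k,j}|$, from which \eqref{4.10} follows after absorbing the constant into $\lambda_{k,j}$.

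Finally, summability comes from the layer-cake comparison
\[
\dsum_{k,j}|\lambda_{k,j}|^p\sim\dsum_k 2^{kp}|O_k^\ast|\ls\dsum_k 2^{kp}|O_k|\ls\|\mathcal{A}(F)\|_{L^p(\rn)}^p=\|F\|_{T^p(\mathbb{R}^{n+1}_+)}^p,
\]
and when $F\in T^p(\mathbb{R}^{n+1}_+)\cap T^2(\mathbb{R}^{n+1}_+)$, convergence of the series in $T^2$ follows from dominated convergence, since the partial sums are pointwise dominated by $|F|$. The main technical obstacle I anticipate is the Fubini-type reverse estimate relating the $L^2(dy\,dt/t)$-mass over the tent complement to the square function on its base; securing the correct geometric proportion requires a careful coordination between the enlargement parameter $\gamma$, the Whitney dilation constant, and the ``collar'' between $Q_{k,j}$ and $cQ_{k,j}$, and it is this step where all constants must be verified to remain uniform in $k$ and $j$.
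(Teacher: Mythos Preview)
Your outline is the standard Coifman--Meyer--Stein stopping-time argument and is correct. Note, however, that the paper does not supply its own proof of this theorem: it is quoted directly from \cite{cms} (with a pointer to \cite[Proposition 3.25]{hmm}), so there is no ``paper's proof'' to compare against beyond the original reference, whose method your sketch faithfully reproduces.
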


Let $M\in\nn$. For all $F\in T^2(\mathbb{R}^{n+1}_+)$, define the
operator $\pi_{M,L}$ by setting, for all $x\in\rn$,
\begin{equation}\label{4.11}
\pi_{M,L}F(x):=\dint_0^\fz\lf(t^{2k}L\r)^{M+1}e^{-t^{2k}L}
F(x,\,t)\,\frac{dt}{t}.
\end{equation}
For this operator, we have the following useful properties.

\begin{lemma}\label{l4.2}
Let $M\in\nn$, $p\in(0,\,1]$, $\ez\in(0,\,\fz)$ and the operator $L$
satisfy the assumptions {\rm(A$_1)$}, {\rm(A$_2)$} and {\rm(A$_3)$}
in Section \ref{s2}. Let $\pi_{M,\,L}$ be as in \eqref{4.11}. Then
\begin{enumerate}
\item[\rm(i)] The operator $\pi_{M,\,L}$ is bounded from
$T^2(\mathbb{R}^{n+1}_+)$ to $L^2(\rn)$.
\item[\rm(ii)] For any
$T^p(\mathbb{R}^{n+1}_+)$-atom $A$, $\pi_{M,\,L}A$ is an
$(H_L^p,\,\ez,\,M)$-molecule up to a harmless positive constant
multiple. \item[\rm(iii)] If $M\in(n(1/p-1/2)/(2k),\,\fz)$, then
the operator $\pi_{M,L}$ is bounded from the tent space
$T^p(\rr^{n+1}_+)$ to the molecular Hardy space
$H_{L,\,\mol,\,M}^p(\rn)$.
\end{enumerate}
\end{lemma}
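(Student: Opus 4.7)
\textbf{Proof plan for Lemma \ref{l4.2}.} The plan is to treat (i) by duality against $L^2(\rn)$ via the $H_\fz$ functional calculus (estimate \eqref{4.3}), to establish (ii) by invoking the $k$-Davies-Gaffney bound of Lemma \ref{l3.1} applied to the operator family $\{(t^{2k}L)^{M+1-\ell}e^{-t^{2k}L}\}_{t>0}$, and then to deduce (iii) by combining (i) and (ii) with the tent-space atomic decomposition (Theorem \ref{t4.2}).

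For (i), I would test $\pi_{M,L}F$ against $g\in L^2(\rn)$ and transfer the operator via Fubini and the adjoint:
$$\langle\pi_{M,L}F,\,g\rangle=\iint_{\rr^{n+1}_+}F(x,\,t)\,\overline{\psi(t^{2k}L^*)g(x)}\,\frac{dx\,dt}{t},$$
where $\psi(z):=z^{M+1}e^{-z}\in\Psi(S_\mu^0)$ for any $\mu\in(\omega,\,\pi/2)$. Since Fubini also yields $\|F\|_{T^2(\rr^{n+1}_+)}^2\sim\int_0^\fz\|F(\cdot,\,t)\|_{L^2(\rn)}^2\,dt/t$, a Cauchy-Schwarz in $(x,\,t)$ reduces matters to the quadratic estimate $\int_0^\fz\|\psi(sL^*)g\|_{L^2(\rn)}^2\,ds/s\lesssim\|g\|_{L^2(\rn)}^2$. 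This follows from \eqref{4.3} applied to $L^*$, which inherits a bounded $H_\fz$ functional calculus from $L$, after the change of variables $s:=t^{2k}$; we conclude $\|\pi_{M,L}F\|_{L^2(\rn)}\lesssim\|F\|_{T^2(\rr^{n+1}_+)}$.

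For (ii), let $A$ be a $T^p(\rr^{n+1}_+)$-atom supported in the Carleson box $R_Q=Q\times(0,\,l(Q))$ and set $m:=\pi_{M,L}A$. For $\ell\in\{0,\,\ldots,\,M\}$ I would write
$$L^{-\ell}m=\dint_0^{l(Q)}t^{2k\ell}(t^{2k}L)^{M+1-\ell}e^{-t^{2k}L}A(\cdot,\,t)\,\frac{dt}{t};$$
the same duality computation as in (i) (combined with the pointwise bound $t^{2k\ell}\le l(Q)^{2k\ell}$ on the support) gives $L^{-\ell}m\in L^2(\rn)$ with $\|L^{-\ell}m\|_{L^2(\rn)}\lesssim l(Q)^{2k\ell}l(Q)^{n(1/2-1/p)}$, so $m$ belongs to the range of $L^\ell$ as required by Definition \ref{d4.2}(i). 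For the size bound \eqref{4.2}, I would split over $i$. For $i$ bounded (depending only on $n$), the $L^2(\rn)$-estimate just produced dominates $[2^il(Q)]^{n(1/2-1/p)}2^{-i\ez}$ trivially. For large $i$, $\dist(Q,\,S_i(Q))\sim 2^il(Q)$, and Lemma \ref{l3.1} applied to $(t^{2k}L)^{M+1-\ell}e^{-t^{2k}L}$ furnishes $k$-Davies-Gaffney decay at scale $z=t^{2k}$. Converting the exponential factor into a polynomial majorant $(t/(2^il(Q)))^N$ for arbitrarily large $N$, then Cauchy-Schwarz in $t$ together with the atom normalization \eqref{4.10} and the cutoff $t\le l(Q)$ yield
$$\lf\|(l(Q)^{-2k}L^{-1})^\ell m\r\|_{L^2(S_i(Q))}\lesssim 2^{-iN}l(Q)^{n(1/2-1/p)};$$
choosing $N\ge\ez+n(1/p-1/2)$ converts this into $[2^il(Q)]^{n(1/2-1/p)}2^{-i\ez}$ and closes \eqref{4.2}.

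For (iii), take $F\in T^p(\rr^{n+1}_+)\cap T^2(\rr^{n+1}_+)$ and apply Theorem \ref{t4.2} to write $F=\sum_j\lz_jA_j$ converging in $T^2(\rr^{n+1}_+)$ with $\sum_j|\lz_j|^p\lesssim\|F\|_{T^p(\rr^{n+1}_+)}^p$. By (i), $\sum_j\lz_j\pi_{M,L}A_j$ converges to $\pi_{M,L}F$ in $L^2(\rn)$, and by (ii) each $\pi_{M,L}A_j$ is a uniform constant multiple of an $(H_L^p,\,\ez,\,M)$-molecule. This exhibits a molecular $(H_L^p,\,2,\,\ez,\,M)$-representation of $\pi_{M,L}F$, whence $\|\pi_{M,L}F\|_{H_{L,\,\mol,\,M}^p(\rn)}\lesssim\|F\|_{T^p(\rr^{n+1}_+)}$, and density of $T^p\cap T^2$ in $T^p$ extends the bound to the whole tent space. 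The constraint $M>n(1/p-1/2)/(2k)$ is the standing hypothesis under which the molecular quasi-norm on $H_{L,\,\mol,\,M}^p(\rn)$ carries the properties used in the remainder of the paper (in particular via Theorem \ref{t4.1}). The main obstacle, in my view, is the size estimate in (ii): one must balance the strong off-diagonal decay from Lemma \ref{l3.1} against the accumulated powers of $l(Q)$ coming from $l(Q)^{-2k\ell}$ and the $t$-Cauchy-Schwarz integration on $(0,\,l(Q))$ so that all $l(Q)$-factors cancel cleanly, leaving only the geometric prefactor $l(Q)^{n(1/2-1/p)}$ and a polynomial $2^{-iN}$ decay in $i$.
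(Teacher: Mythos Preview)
Your proposal is correct and follows the same three-step strategy as the paper: duality plus the quadratic estimate \eqref{4.3} for (i), the $k$-Davies-Gaffney bound of Lemma \ref{l3.1} together with the atom's support and size conditions for (ii), and the tent-space atomic decomposition (Theorem \ref{t4.2}) combined with (i)--(ii) for (iii). The only cosmetic difference is that in (ii) the paper transfers the operator to $L^*$ by duality before invoking Lemma \ref{l3.1} on $(t^{2k}L^*)^{\ell+1}e^{-t^{2k}L^*}$ acting on the test function $g$, whereas you apply Lemma \ref{l3.1} directly to $(t^{2k}L)^{M+1-\ell}e^{-t^{2k}L}$ acting on the atom; both routes are equally valid.
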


\begin{proof} We first show (i).
Let $L^*$ be the \emph{adjoint operator} of $L$ in $L^2(\rn)$.
Observe that $L^*$ also satisfies the assumptions (A$_1$), (A$_2$)
and (A$_3$) in Section \ref{s2}. By Fubini's theorem, H\"older's
inequality and the quadratic estimate \eqref{4.3} with $L$
replaced by $L^*$, we see that for all $F\in
T^2(\mathbb{R}^{n+1}_+)$ and $g\in L^2(\rn)$,
\begin{eqnarray*}
|(\pi_{M,L}F,\,g)| &&=\lf|\dint_\rn\dint_0^{\fz}\lf(t^{2k}L\r)^{M+1}
e^{-t^{2k}L}F(x,\,t)\overline{g(x)}\,\frac{dt}{t}\,dx\r|\\
&&=\lf|\dint_0^{\fz}\dint_\rn
F(x,\,t)\overline{\lf(t^{2k}L^*\r)^{M+1}
e^{-t^{2k}L^*}g(x)}\,dx\,\frac{dt}{t}\r|\\
&&\ls\lf\{\dint_0^{\fz}\dint_\rn
|F(x,\,t)|^2\,dx\,\frac{dt}{t}\r\}^{1/2}\lf\{\dint_0^{\fz}\dint_\rn\lf|\lf(t^{2k}L^* \r)^{M+1}
e^{-t^{2k}L^*}g(x)\r|^2\,dx
\,\frac{dt}{t}\r\}^{1/2}\\
&&\ls\|F\|_{T^2(\mathbb{R}^{n+1}_+)}\|g\|_{L^2(\rn)},
\end{eqnarray*}
which further implies that the operator $\pi_{M,L}$ is bounded from
$T^2(\mathbb{R}^{n+1}_+)$ to $L^2(\rn)$. Thus, (i) holds.

To prove (ii), let $A$ be a $T^p(\mathbb{R}^{n+1}_+)$-atom $A$
associated with the cube $Q$. From \eqref{4.11}, it follows that for
all $\ell\in\{0,\,\cdots,\,M\}$ and $x\in\rn$,
\begin{eqnarray}\label{4.12}
\pi_{M,L}A(x)=\dint_0^\fz\lf(t^{2k}L\r)^{M+1}e^{-t^{2k}L}
A(x,\,t)\,\frac{dt}{t}=L^\ell\dint_0^\fz t^{2k(M+1)}L^{M+1-\ell}
e^{-t^{2k}L}
A(x,\,t)\,\frac{dt}{t}.
\end{eqnarray}
Observe that
$$\dint_0^\fz
t^{2k(M+1)}L^{M+1-\ell}e^{-t^{2k}L} A(x,\,t)\,\frac{dt}{t}
=\dint_0^\fz
t^{2k(M+1)}\lf(L^{1-\frac\ell{M+1}}\r)^{M+1}e^{-t^{2k}L}
A(x,\,t)\,\frac{dt}{t},$$ which belongs to $L^2(\rn)$ via a dual
argument similar to that used in the proof of (i). This, combined
with \eqref{4.12}, implies that $\pi_{M,L}(A)$ satisfies Definition
\ref{d4.2}(i).

For all $x\in\rn$, letting
$$b(x):= \dint_0^\fz t^{2k(M+1)}Le^{-t^{2k}L}
A(x,\,t)\,\frac{dt}{t},$$ we then have $\pi_{M,L}A(x)=L^M b(x).$
For all $g\in L^2(\rn)$, from H\"older's inequality, \eqref{4.9},
\eqref{4.10} and the quadratic estimate \eqref{4.3} with $L$
replaced by $L^*$, we deduce that
\begin{eqnarray}\label{4.13}
&&\lf|\dint_\rn\lf([l(Q)]^{2k}L\r)^\ell b(x)\overline{g(x)}\,dx\r|
\nonumber\\
\nonumber &&\hs=\lf|\dint_\rn \lf\{\dint_0^{\fz}\lf[l(Q)\r]^{2k\ell}
L^{\ell+1} t^{2k(M+1)}e^{-t^{2k}L}A(x,\,t)\,\frac{dt}{t}
\r\}\overline{g(x)}\,dx\r|\\ \nonumber
&&\hs=\lf[l(Q)\r]^{2k\ell}\lf|\iint_{R_Q}A(x,\,t)\overline{(L^*)^{\ell+1}
t^{2k(M+1)}e^{-t^{2k}L^*}g(x)}\,\frac{dx\,dt}{t}\r|\\ \nonumber
&&\hs\ls[l(Q)]^{2kl}\lf[\iint_{R_Q}|A(x,\,t)|^2\,\frac{dx\,dt}{t}
\r]^{1/2}\lf[\iint_{R_Q}\lf|\overline{(L^*)^{\ell+1}t^{2k(M+1)}
e^{-t^{2k}L^*}
g(x)}\r|^2\,\frac{dx\,dt}{t}\r]^{1/2}\nonumber\\ \nonumber
&&\hs\ls[l(Q)]^{n(1/2-1/p)+2kM} \lf\{\iint_{R_Q}
\lf|(t^{2k}L^*)^{\ell+1}e^{-t^{2k}L^*}
g(x)\r|^2\frac{dx\,dt}{t}\r\}^{1/2}\\
&&\hs\ls[l(Q)]^{n(1/2-1/p)+2kM}\|g\|_{L^2(\rn)},
\end{eqnarray}
which further implies that for all $\ell\in\{0,\,\cdots,\,M\}$,
\begin{eqnarray*}
\lf\|\lf([l(Q)]^{2k}L\r)^\ell b\r\|_{L^2(2Q)}\ls[l(Q)]^{2kM}
|Q|^{\frac{1}{2}-\frac{1}{p}}.
\end{eqnarray*}
Thus, by this, we conclude that for all $\wz
\ell\in\{0,\,\cdots,\,M\}$ and $j\in\{0,\,1\}$,
\begin{eqnarray}\label{4.14}
\lf\|\lf([l(Q)]^{-2k}L^{-1}\r)^{\wz\ell}(\pi_{M,\,L}A)\r\|_{L^2(S_j(Q))}
&&=\lf\|\lf([l(Q)]^{-2k}L^{-1}\r)^{\wz\ell}L^Mb\r\|_{L^2(S_j(Q))}\noz\\
&&\ls \lf\|\lf([l(Q)]^{2k(M-\wz\ell)}L^{M-\wz\ell}\r)
b\r\|_{L^2(2(Q))}|l(Q)|^{-2kM}\noz\\
&&\ls[l(Q)]^{n(1/2-1/p)},
\end{eqnarray}
which is desired.

Moreover, for all $\wz\ell\in\{0,\,\cdots,\,M\}$ and
$j\in\{2,\,3,\,\cdots\}$, letting $g\in L^2(\rn)$ with $\supp
g\subset S_j(Q)$, choosing $\az\in(n(1/p-1/2)(2-1/k),\,\fz)$ and
using Lemma \ref{l3.1}, similar to the estimate for \eqref{4.13},
we see that
\begin{eqnarray*}
&&\lf|\dint_{\rn}\lf([l(Q)]^{2k}L\r)^{\ell}b(x)\overline{g(x)}\,dx\r|\\
&&\hs\ls[l(Q)]^{n(1/2-1/p)+2kM}\lf\{\dint_0^{l(Q)}\dint_Q\lf|
(t^{2k}L^*)^{\ell+1}e^{-t^{2k}L^*}g(x)
\r|^2\frac{dx\,dt}{t}\r\}^{1/2}\\
&&\hs\ls[l(Q)]^{n(1/2-1/p)+2kM}\lf[\dint_0^{l(Q)}
\exp\lf\{-C\lf[\frac{\dist(Q,\,S_j(Q))}
{t}\r]^{2k/(2k-1)}\r\}\,\frac{dt}{t}\r]^{1/2}\|g\|_{L^2(S_j(Q))}\\
&&\hs\ls[l(Q)]^{n(1/2-1/p)+2kM}2^{-jk\az/(2k-1)}
\|g\|_{L^2(S_j(Q))},
\end{eqnarray*}
which further implies that for all $\wz\ell\in\{0,\,\cdots,\,M\}$
and $j\in\{2,\,3,\,\cdots\}$,
\begin{eqnarray}\label{4.15}
\lf\|\lf([l(Q)]^{-2k}L^{-1}\r)^{\wz\ell}(\pi_{M,\,L}A)\r\|_{L^2(S_j(Q))}
&&\le\lf\|\lf([l(Q)]^{-2k}L^{-1}\r)^{\wz\ell}L^Mb\r\|_{L^2(S_j(Q))}\noz\\
\nonumber &&\ls \lf\|\lf([l(Q)]^{2k(M-\wz\ell)}L^{M-\wz\ell}\r)
b\r\|_{L^2(S_j(Q))}|l(Q)|^{-2kM}\\
&&\ls[2^jl(Q)]^{n(1/2-1/p)}2^{-j\lf[k\az/(2k-1)-n(1/p-1/2)\r]}\noz\\
&&\sim[2^jl(Q)]^{n(1/2-1/p)}2^{-j\ez},
\end{eqnarray}
where $\ez:= k\az/(2k-1)-n(1/p-1/2)\in (0,\fz)$.

Combining \eqref{4.14} and \eqref{4.15}, we know that $\pi_{M,L}A$
satisfies Definition \ref{d4.2}(ii) up to a harmless positive
constant multiple. Thus, $\pi_{M,L}A$ is an
$(H_L^p,\,\ez,\,M)$-molecule up to a harmless positive constant
multiple, which completes the proof of (ii).

To show (iii), by density, we only need to show that for all $F\in
T^p(\rr^{n+1}_+)\cap T^2(\rr^{n+1}_+)$,
\begin{eqnarray*}
\lf\|\pi_{M,L}F\r\|_{H_{L,\,\mol,\,M}^p(\rn)}\ls\lf\|F\r\|_{
T^p(\rr^{n+1}_+)}.
\end{eqnarray*}
To this end, by Theorem \ref{t4.2}, there exist a sequence
$\{A_i\}_{i=0}^\fz$ of $T^p(\rr^{n+1}_+)$-atoms and
$\{\lz_i\}_{i=0}^\fz\in l^p$ such that $F=\sum_{i=0}^\fz\lz_iA_i$ in
both pointwise  and $T^2(\rr^{n+1}_+)$, and
\begin{eqnarray*}
\lf(\dsum_{i=0}^\fz\lf|\lz_i\r|^p\r)^{1/p}
\sim\lf\|F\r\|_{T^p(\rr^{n+1}_+)}.
\end{eqnarray*}
By (i) of this lemma, we know that
\begin{eqnarray*}
\pi_{M,L}F=\dsum_{i=0}^\fz\lz_i\pi_{M,L}A_i
\end{eqnarray*}
in $L^2(\rn)$, which, combining (ii) of Lemma \ref{l4.2}, shows that
$\sum_{i=0}^\fz\lz_i\pi_{M,L}A_i$ is a molecular
$(H_L^p,\,2,\,\ez,\,M)$-representation of $\pi_{M,L}F$. Thus,
$\pi_{M,L}F\in H_{L,\,\mol,\,M}^p(\rn)$ and
\begin{eqnarray*}
\lf\|\pi_{M,L}F\r\|_{H_{L,\,\mol,\,M}^p(\rn)}
\ls\lf\{\dsum_{i=0}^\fz\lf|\lz_i\r|^p\r\}^{1/p}
\sim\lf\|F\r\|_{T^p(\rr^{n+1}_+)},
\end{eqnarray*}
which completes the proof of (iii) and hence Lemma \ref{l4.2}.
\end{proof}

\begin{proof}[Proof of Theorem \ref{t4.1}: the inclusion
$\mathbb{H}_L^p(\rn)\subset\mathbb{H}_{L,\,\mol,\,M}^p(\rn)$.] For
all $f\in\mathbb{H}_L^p(\rn)$, $t\in(0,\,\fz)$ and $x\in\rn$, let
\begin{eqnarray}\label{4.16}
F(x,\,t):= t^{2k}Le^{-t^{2k}L} f(x).
\end{eqnarray}
By $S_Lf\in L^p(\rn)$ and $f\in L^2(\rn)$, together with the fact
that $S_L$ is bounded on $L^2(\rn)$, we know that $F\in
T^p(\mathbb{R}^{n+1}_{+})\cap T^2 (\mathbb{R}^{n+1}_{+})$.
Moreover, by the $H_\fz$ functional calculus in $L^2(\rn)$, we
see the following Calder\'on reproducing formula that for all
$g\in L^2(\rn)$,
\begin{eqnarray*}
g=C_9\dint_0^\fz(t^{2k} L)^{M+2}e^{-2t^{2k}L}g\,\frac{dt}{t},
\end{eqnarray*}
where $C_9$ is a positive constant such that $C_9 \int_0^\fz
t^{2k(M+2)}e^{-2t^{2k}}\,\frac{dt}{t}=1$. Thus, for all $f\in
\mathbb{H}_L^p(\rn)$, if letting $F$ be as in \eqref{4.16}, then
$f=C_9\pi_{M,L}F$ and, by Lemma \ref{l4.2}(iii) and its proof, we
further know that $f\in \mathbb{H}_{L,\,\mol,\,M}^p(\rn)$ and
$\|f\|_{H_{L,\,\mol,\,M}}^p(\rn) \ls\|f\|_{H_{L}^p(\rn)}$.
Therefore, $\mathbb{H}_L^p(\rn)\subset
\mathbb{H}_{L,\,\mol,\,M}^p(\rn)$, which completes the proof of
Theorem \ref{t4.1}.
\end{proof}

\section{Generalized square function
characterizations of $H_L^p(\rn)$}\label{s5}

This section is devoted to the generalized square function
characterization of $H_L^p(\rn)$. We first introduce the notion of
the Hardy space $H_{\psi,L}^p(\rn)$ defined via the generalized
square function. Let $\omega\in[0,\,\pi/2)$, $\az\in(0,\,\fz)$,
$\bz\in(n(1/p-1/2)/(2k),\,\fz)$ and
$\psi\in\Psi_{\az,\bz}(S_{\mu}^0)$ with $\mu\in(\omega,\,\pi/2)$.
For all $f\in L^2(\rn)$ and $(x,\,t)\in \mathbb{R}^{n+1}_{+}$,
define the \emph{operator $Q_{\psi,L}f$} by,
\begin{eqnarray}\label{5.1}
Q_{\psi,L}f(x,\,t):=\psi(t^{2k}L)f(x).
\end{eqnarray}

\begin{definition}\label{d5.1}
Let $p\in(0,\,1]$, $\omega\in[0,\,\pi/2)$,  $L$ be the operator of
type $\omega$ satisfying the assumptions (A$_1$), (A$_2$) and
(A$_3$) in Section \ref{s2}, $\az\in(0,\,\fz)$,
$\bz\in(n(1/p-1/2)/(2k),\,\fz)$, $\mu\in(\omega,\,\pi/2)$ and
$\psi\in\Psi_{\az,\bz}(S_{\mu}^0)$. The {\it generalized square
function Hardy space} $H_{\psi,L}^p(\rn)$ is defined to be the
completion of the space
\begin{eqnarray*}
\mathbb{H}_{\psi,L}^p(\rn):=\lf\{f\in L^2(\rn):\ Q_{\psi,L}f\in
T^p(\mathbb{R}^{n+1}_{+})\r\}
\end{eqnarray*}
with respect to the \emph{quasi-norm} $\|f\|_{H_{\psi,L}^p(\rn)}
:=\|Q_{\psi,L}f\|_{T^p(\mathbb{R}^{n+1}_{+})}.$
\end{definition}

The following theorem, which establishes the generalized square
function characterization of $H_L^p(\rn)$, is the main result of
this section.

\begin{theorem}\label{t5.1}
Let $p\in(0,\,1]$, $\omega\in[0,\,\pi/2)$, $L$ be the operator of
type $\omega$ satisfying the assumptions {\rm(A$_1)$}, {\rm(A$_2)$}
and {\rm(A$_3)$} in Section \ref{s2}, $\az\in(0,\,\fz)$,
$\bz\in(n(1/p-1/2)/(2k),\,\fz)$, $\mu\in(\omega,\,\pi/2)$ and
$\psi\in\Psi_{\az,\bz}(S_{\mu}^0)$. Then the Hardy space
$H_L^p(\rn)=H_{\psi,L}^p(\rn)$ with equivalent norms.
\end{theorem}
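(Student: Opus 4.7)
The plan is to combine the two boundedness results of Lemma \ref{l5.3} with a Calder\'on reproducing formula of the form
\begin{equation*}
g = c_\psi\int_0^\fz\wz\psi(t^{2k}L)\psi(t^{2k}L)g\,\frac{dt}{t},\qquad g\in L^2(\rn),
\end{equation*}
in which $\wz\psi\in\Psi_{\sz,\tau}(S_\mu^0)$ is a conjugate symbol (with enough decay to satisfy the constraint $\bz>n(1/p-1/2)/(2k)$) chosen so that $c_\psi\int_0^\fz\wz\psi(s)\psi(s)\,\frac{ds}{s}=1$. Such a formula is a standard consequence of the bounded $H_\fz$ functional calculus assumption (A$_2$) and will be invoked as \eqref{5.14}. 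The inclusion $H_L^p(\rn)\subset H_{\psi,L}^p(\rn)$ is then immediate from the boundedness of $Q_{\psi,L}:H_L^p(\rn)\to T^p(\mathbb{R}^{n+1}_+)$ provided by Lemma \ref{l5.3}, since $\|f\|_{H_{\psi,L}^p(\rn)}=\|Q_{\psi,L}f\|_{T^p(\mathbb{R}^{n+1}_+)}\ls\|f\|_{H_L^p(\rn)}$.

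For the reverse inclusion, I would start with $f\in\mathbb{H}_{\psi,L}^p(\rn)\cap L^2(\rn)$ and set $G:=Q_{\psi,L}f\in T^p(\mathbb{R}^{n+1}_+)\cap T^2(\mathbb{R}^{n+1}_+)$. Defining $\pi_{\wz\psi,L}(H)(x):=\int_0^\fz\wz\psi(t^{2k}L)H(\cdot,t)(x)\,\frac{dt}{t}$ in analogy with \eqref{4.11}, the reproducing formula becomes $f=c_\psi\,\pi_{\wz\psi,L}(G)$. The second half of Lemma \ref{l5.3} (boundedness of $\pi_{\wz\psi,L}$ from $T^p(\mathbb{R}^{n+1}_+)$ to $H_L^p(\rn)$) then gives
\begin{equation*}
\|f\|_{H_L^p(\rn)}\ls\|G\|_{T^p(\mathbb{R}^{n+1}_+)}=\|f\|_{H_{\psi,L}^p(\rn)},
\end{equation*}
completing the proof.

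The main obstacle lies entirely in Lemma \ref{l5.3}, and specifically in the $T^p\to H_L^p$ boundedness of $\pi_{\wz\psi,L}$. As in the proof of Lemma \ref{l4.2}(iii), this reduces, via the tent-space atomic decomposition (Theorem \ref{t4.2}) and the molecular characterization of $H_L^p(\rn)$ from Theorem \ref{t4.1}, to showing that for each $T^p(\mathbb{R}^{n+1}_+)$-atom $A$ supported in a Carleson box $R_Q$, $\pi_{\wz\psi,L}(A)$ is an $(H_L^p,\,\ez,\,M)$-molecule up to a harmless positive constant multiple. This mirrors Lemma \ref{l4.2}(ii), but with the family $\{(tL)^{M+1}e^{-tL}\}_{t>0}$ replaced by $\{\wz\psi(tL)\psi(tL)\}_{t>0}$; one therefore must invoke the $k$-Davies-Gaffney estimates of order $\sz$ from Lemma \ref{l3.3} in place of the semigroup off-diagonal bounds of Lemma \ref{l3.1}. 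The hypothesis $\bz>n(1/p-1/2)/(2k)$ on $\psi$ provides exactly the amount of off-diagonal decay needed to sum the resulting $L^2$-estimates over the dyadic annuli $S_j(Q)$ into the size condition \eqref{4.2} demanded of a molecule. The $Q_{\psi,L}$-half of Lemma \ref{l5.3} is handled analogously: writing $f=C_9\pi_{M,L}(F)$ with $F(x,t):=t^{2k}Le^{-t^{2k}L}f(x)$ as in the proof of Theorem \ref{t4.1}, the operator $Q_{\psi,L}$ acting on $f$ becomes a composition of the type $Q^h=Q_{\psi,L}\pi_{M,L}$ on tent spaces, whose $T^p$-boundedness is precisely the object of Lemma \ref{l5.2}.
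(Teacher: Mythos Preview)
Your argument for Theorem \ref{t5.1} itself is correct and matches the paper's: both directions reduce to Lemma \ref{l5.3} via the Calder\'on reproducing formula $I=\pi_{\wz\psi,L}\circ Q_{\psi,L}$ in $L^2(\rn)$, and the forward inclusion is exactly Lemma \ref{l5.3}(i) while the reverse is exactly Lemma \ref{l5.3}(ii) applied to $G=Q_{\psi,L}f$.

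Where you diverge from the paper is in your sketch of Lemma \ref{l5.3}(ii). You propose to prove that $\pi_{\wz\psi,L}$ carries $T^p(\mathbb{R}^{n+1}_+)$-atoms to $(H_L^p,\ez,M)$-molecules, mimicking Lemma \ref{l4.2}. This works provided $\wz\psi$ is chosen with a zero of order at least $M$ at the origin (so that $\pi_{\wz\psi,L}A$ lies in the range of $L^\ell$ for $\ell\le M$), and then the size estimates \eqref{4.2} follow from Lemma \ref{l3.3} as you indicate. The paper takes a shorter route: it proves both halves of Lemma \ref{l5.3} uniformly by composing with $Q_{\psi_0,L}$ (where $\psi_0(z)=ze^{-z}$) and invoking the $T^p$-boundedness of the composite operator $Q^f=Q_{\psi_0,L}\circ\pi_{\wz\psi,L}$ from Lemma \ref{l5.2} with $f\equiv 1$. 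This avoids the molecular machinery entirely and makes the constraint $\bz>n(1/p-1/2)/(2k)$ appear only once, in the hypotheses of Lemma \ref{l5.2}. Your molecular route is valid but somewhat heavier, since you must separately verify Definition \ref{d4.2}(i) for general $\wz\psi$, which forces the specific choice $\wz\psi(z)=Cz^Me^{-z}$ rather than an abstract conjugate symbol.
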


Before proving Theorem \ref{t5.1}, we first give an application of
this theorem. Let $\az\in(0,\,\fz)$ and $L^\az$ be the {\it
fractional power with exponent $\az$ of $L$} defined by the $H_\fz$
functional calculus in $L^2(\rn)$ (see, for example, \cite{mc,ha}).
More precisely, choose $m\in\nn$ such that $m>\az$. Then,
$z^\az(1+z)^{-m}\in\Psi_{\az,\,m-\az}(S_\mu^0)$ for all
$\mu\in[0,\,\pi/2)$ and $L^\az$ is defined by setting
\begin{eqnarray*}
L^\az:= (z^\az)(L):=(1+L)^m\lf(\frac{z^\az}{(1+z)^m}\r)(L).
\end{eqnarray*}
For more details about $L^\az$, we refer the reader to \cite{mc,ha}
and the references therein.

{\it Assume that $-L^\az$ generates a  bounded holomorphic semigroup
$\{e^{-tL^\az}\}_{t>0}$}. From \cite[Example 3.4.6]{ha}, it follows
that this is true when $\az\in(0,\,1]$, and in this case,
$\{e^{-tL^\az}\}_{t>0}$ is called the {\it subordinated semigroup}
(see \cite[p.\,80]{ha} for more details). For all $f\in L^2(\rn)$,
define the {\it $L^\az$-adapted square function} $S_{L^\az}$ by
setting, for all $x\in\rn$,
\begin{eqnarray}\label{5.2}
S_{L^\az}f(x):= \lf\{\iint_{\bgz(x)}\lf|t^{2k\az}L^\az e^{-t^{2k\az
}L^\az} f(y)\r|^2\frac{dy\,dt}{t^{n+1}}\r\}^{1/2}.
\end{eqnarray}

For $p\in(0,\,1]$, we also define the {\it Hardy space
$H_{L^\az}^p(\rn)$ associated with $L^\az$} to be  the completion of
the set
\begin{eqnarray}\label{5.3}
\mathbb{H}_{L^\az}^p(\rn):=\lf\{f\in L^2(\rn): \lf\|S_{L^\az}f
\r\|_{L^p(\rn)}<\fz\r\}
\end{eqnarray}
with respect to the quasi-norm
$\|f\|_{H_{L^\az}^p(\rn)}:=\lf\|S_{L^\az}f\r\|_{L^p(\rn)}.$

With the help of Theorem \ref{t5.1}, we immediately obtain the
following interesting corollary.

\begin{corollary}\label{c5.1}
Let $p\in (0,1]$ and $L$ satisfy the assumptions {\rm(A$_1)$},
{\rm(A$_2)$} and {\rm(A$_3)$}. Assume further that when $\az\in
(1,\,\fz)$, $-L^\az$ generates a bounded holomorphic semigroup.
Then, for all $\az\in(0,\,\fz)$, the Hardy spaces
$H_{L^\az}^p(\rn)=H_L^p(\rn)$ with equivalent norms.
\end{corollary}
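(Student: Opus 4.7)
The strategy is to realize the $L^\az$-adapted square function $S_{L^\az}$ as a special instance of the generalized square function associated with $L$ and then appeal to Theorem \ref{t5.1}. Concretely, set
$$\psi(z):=z^\az e^{-z^\az}.$$
By the composition rule of the $H_\fz$ functional calculus, and by the scaling $(tL)^\az=t^\az L^\az$, one has for all $t\in(0,\fz)$
$$\psi(t^{2k}L)=(t^{2k}L)^\az e^{-(t^{2k}L)^\az}=t^{2k\az}L^\az e^{-t^{2k\az}L^\az}.$$
Hence $Q_{\psi,L}f(x,t)=t^{2k\az}L^\az e^{-t^{2k\az}L^\az}f(x)$ and $\mathcal{A}(Q_{\psi,L}f)=S_{L^\az}f$, so the norms on $\mathbb{H}^p_{L^\az}(\rn)$ and $\mathbb{H}^p_{\psi,L}(\rn)$ coincide, and accordingly $H^p_{L^\az}(\rn)=H^p_{\psi,L}(\rn)$ isometrically.

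The second step is to verify that $\psi\in\Psi_{\az,\tau}(S_\mu^0)$ for some $\mu\in(\omega,\,\pi/2)$ and any prescribed $\tau\in(n(1/p-1/2)/(2k),\,\fz)$, so that Theorem \ref{t5.1} applies. The holomorphicity on $S_\mu^0$ is immediate from the holomorphicity of $z^\az$ on any sector not containing the negative real axis. Writing $z=re^{i\tz}$ with $|\tz|<\mu$, one has $|\psi(z)|=r^\az\exp(-r^\az\cos(\az\tz))$, which gives the small-$|z|$ bound $|\psi(z)|\ls|z|^\az$, and, provided $\az\mu<\pi/2$, the decay $|\psi(z)|\ls|z|^{-\tau}$ for every $\tau\in(0,\fz)$ at infinity because of the exponential factor with $\cos(\az\tz)\ge\cos(\az\mu)>0$. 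Hence I need $\mu\in(\omega,\,\pi/2)\cap(\omega,\,\pi/(2\az))$ to be nonempty.

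For $\az\in(0,1]$, the constraint $\az\mu<\pi/2$ is implied by $\mu<\pi/2$, so any $\mu\in(\omega,\,\pi/2)$ works. For $\az\in(1,\fz)$, the additional hypothesis that $-L^\az$ generates a bounded holomorphic semigroup forces $L^\az$ to be of type strictly less than $\pi/2$; combined with the fact that a power $L^\az$ of an operator of type $\omega$ is of type $\az\omega$, this yields $\az\omega<\pi/2$, and leaves room to pick $\mu\in(\omega,\,\pi/(2\az))$. This is the one genuinely delicate point of the argument: everything else is bookkeeping, but this is where the subordinated-semigroup assumption in the statement is consumed.

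Once $\psi\in\Psi_{\az,\tau}(S_\mu^0)$ is established with $\tau>n(1/p-1/2)/(2k)$, Theorem \ref{t5.1} gives $H^p_{\psi,L}(\rn)=H^p_L(\rn)$ with equivalent quasi-norms. Combining this with the identification $H^p_{L^\az}(\rn)=H^p_{\psi,L}(\rn)$ from the first step yields $H^p_{L^\az}(\rn)=H^p_L(\rn)$ with equivalent norms, as claimed. In particular, taking $L:=(-\bdz)^k$, $\az:=1$ and noting $H^p_{(-\bdz)^k}(\rn)$ coincides with $H^p_L(\rn)$ in our sense, one recovers the identification with the classical Hardy space announced in the introduction after combining with well-known square-function characterizations of $H^p(\rn)$.
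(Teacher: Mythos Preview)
Your proof is correct and follows essentially the same route as the paper: set $\psi(z):=z^\az e^{-z^\az}$, identify $S_{L^\az}$ with the generalized square function $Q_{\psi,L}$, check $\psi\in\Psi_{\az,\tau}(S_\mu^0)$, and invoke Theorem~\ref{t5.1}. You are in fact more careful than the paper's own proof in isolating the angle constraint $\az\mu<\pi/2$ and explaining how the bounded-holomorphic-semigroup hypothesis for $\az>1$ guarantees $\az\omega<\pi/2$ so that a suitable $\mu$ exists; the paper asserts $\psi\in\Psi_{\az,\bz}(S_\mu^0)$ for all $\mu\in(\omega,\pi/2)$ without flagging this point.
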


\begin{proof}
Let $\omega\in[0,\,\pi/2)$. Recall that $L$ is an operator of type
$\omega$. For all $\az\in(0,\,\fz)$, $\mu\in(\omega,\,\pi/2)$ and
$\xi\in S_\mu^0$,  set $\psi(\xi):= \xi^\az e^{-\xi^\az}$. Then,
for all $\bz\in(n(1/p-1/2)/(2k),\,\fz)$, $\psi\in
\Psi_{\az,\bz}(S_\mu^0)$ and hence, by Theorem \ref{t5.1}, we
conclude that for all $f\in L^2(\rn)$,
\begin{eqnarray*}
\|f\|_{H_{L^\az}^p(\rn)}=\|S_{L^\az}f\|_{L^p(\rn)}=
\|Q_{\psi,L}f\|_{T^p(\mathbb{R}^{n+1}_+)}
=\|f\|_{H_{\psi,L}^p(\rn)}\sim\|f\|_{H_{L}^p(\rn)},
\end{eqnarray*}
which, together with the density of $L^2(\rn)$ in $H_L^p(\rn)$ and
$H_{L^\az}^p(\rn)$, shows that $H_L^p(\rn)=H_{L^\az}^p(\rn)$ with
equivalent norms. This finishes the proof of Corollary \ref{c5.1}.
\end{proof}

Let $\omega\in[0,\,\pi/2)$ be as in Section \ref{s2} and
$\mu\in(\omega,\,\pi/2)$. To prove Theorem \ref{t5.1}, we introduce
two operators as follows:
\begin{enumerate}
\item[(i)] For all $F\in T^2(\mathbb{R}^{n+1}_{+})$ and
$\psi\in\Psi(S_\mu^0)$, the \emph{operator $\pi_{\psi,L}$} is
defined by setting, for all $x\in\rn$,
\begin{eqnarray}\label{5.4}
\pi_{\psi,L}F(x):=\dint_0^\fz\psi(t^{2k}L)F(x,\,t)\,\frac{dt}{t};
\end{eqnarray}
\item[(ii)] For all $\psi$,  $\wz\psi\in\Psi(S_\mu^0)$, $f\in
H_\fz(S_{\mu}^0)$ and $F\in T^2(\mathbb{R}^{n+1}_{+})$, the
\emph{operator $Q^{f}$} is defined by setting, for all $x\in\rn$
and $s\in(0,\,\fz)$,
\begin{eqnarray}\label{5.5}
Q^{f}F(x,\,s)&&:= Q_{\psi,L}\circ f(L)\circ
\pi_{\wz\psi,L}F(x,\,s)=\dint_0^\fz\psi(s^{2k}L)f(L)\wz\psi(t^{2k}L)
F(x,\,t)\,\frac{dt}{t},
\end{eqnarray}
where the operator $Q_{\psi,L}$ is defined as in \eqref{5.1}.
\end{enumerate}

Observe that by \eqref{4.3}, $Q_{\psi,L}$ is bounded from
$L^2(\rn)$ to $T^2(\mathbb{R}^{n+1}_+)$ and so is
$Q_{\overline{\psi},L^*}$. By Fubini's theorem and H\"older's
inequality, we see that for all $F\in T^2(\mathbb{R}^{n+1}_{+})$
and $g\in L^2(\rn)$,
\begin{eqnarray*}
\dint_{\rn}\pi_{\psi,L}F(x)\overline{g(x)}\,dx&&=\dint_{\rn}
\dint_0^\fz \psi(t^{2k}L)F(x,\,t)\,\frac{dt}{t}\overline{g(x)}\,dx
=\dint_0^\fz\dint_{\rn}
F(x,\,t)\overline{Q_{\overline\psi,L^*}(g)(x)} \,dx\,\frac{dt}{t}.
\end{eqnarray*}
Thus, $Q_{\overline\psi,L^*}$ is the adjoint operator of
$\pi_{\psi,L}$, which, together with the above observation, shows
that $\pi_{\psi,L}$ is bounded from $T^2(\mathbb{R}^{n+1}_{+})$ to
$L^2(\rn)$. From these facts and \eqref{5.5} together with that $L$
has a bounded $H_\fz$ functional calculus in $L^2(\rn)$, it follows
that $Q^f$ is bounded on $T^2(\mathbb{R}^{n+1}_{+})$.

Let $\sz_1$, $\sz_2$, $\tau_1$, $\tau_2\in (0,\,\fz)$. Assume that
$\psi\in\Psi_{\sz_1,\tau_1}(S_\mu^0)$ and
$\wz\psi\in\Psi_{\sz_2,\tau_2}(S_\mu^0)$. We now consider the
operator $\psi(s^{2k}L)f(L)\wz\psi(t^{2k}L)$ in \eqref{5.5}.  Let
$a\in(0,\,\min\{\sz_1,\tau_2\})$ and $b\in(0,\,
\min\{\sz_2,\tau_1\})$. For $s$, $t\in (0,\,\fz)$, when $s\le t$, we
write
\begin{eqnarray}\label{5.6}
\psi(s^{2k}L)f(L)\wz\psi(t^{2k}L)&&=\lf(\frac{s^{2k}}{t^{2k}}\r)^a
\lf(s^{2k}L\r)^{-a}\psi(s^{2k}L)f(L)\lf(t^{2k}L\r)^a
\wz\psi(t^{2k}L)=: \lf(\frac{s^{2k}}{t^{2k}}\r)^a
T_{s^{2k},t^{2k}},
\end{eqnarray}
while when $s> t$, we write
\begin{eqnarray}\label{5.7}
\psi(s^{2k}L)f(L)\wz\psi(t^{2k}L)&&=\lf(\frac{t^{2k}}{s^{2k}}\r)^b
\lf(s^{2k}L\r)^{b}\psi(s^{2k}L)f(L)\lf(t^{2k}L\r)^{-b}
\wz\psi(t^{2k}L)=: \lf(\frac{t^{2k}}{s^{2k}}\r)^b
T_{s^{2k},t^{2k}}.
\end{eqnarray}
Then, we have the following useful estimate on
$\{T_{s,\,t}\}_{s,t>0}$.

\begin{lemma}\label{l5.1}
Let $k\in\nn$ be as in \eqref{2.6}, $\sz_1$, $\sz_2$, $\tau_1$,
$\tau_2\in(0,\,\fz)$, $\omega\in [0,\,\pi/2)$,
$\mu\in(\omega,\,\pi/2)$, $\psi\in\Psi_{\sz_1,\tau_1}(S_{\mu}^0)$,
$\wz\psi\in \Psi_{\sz_2,\tau_2}(S_{\mu}^0)$,
$a\in(0,\,\min\{\sz_1,\,\tau_2\})$ and
$b\in(0,\,\min\{\sz_2,\,\tau_1\})$. Let $f\in H_\fz(S_\mu^0)$. Let
$\{T_{s,\,t}\}_{s,t>0}$ be as in \eqref{5.6} and \eqref{5.7} with
$s^{2k}$ and $t^{2k}$ replaced, respectively, by $s$ and $t$. Then,
there exists a positive constant $C$ such that  for all
$M\in(0,\,\min\{\sz_2+a,\,\tau_1+b\})$, $s,\,t\in(0,\,\fz)$, closed
sets $E$, $F\subset\rn$ and $g\in L^2(\rn)$ supported in $E$,
\begin{eqnarray}\label{5.8}
\hs\hs\|T_{s,t}g\|_{L^2(F)}\le C\|f\|_{L^\fz(\rn)}\min\lf\{1,\,
\frac{\max\{t,\,s\}}{\lf[\dist(E,\,F)\r]^{2k}}\r\}^{M}
\|g\|_{L^2(E)}.
\end{eqnarray}
\end{lemma}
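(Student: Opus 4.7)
The plan is to run the composition-estimate template of Lemma~\ref{l3.2}, with the off-diagonal bounds for the constituent factors supplied by Lemma~\ref{l3.3}. I would treat the cases $s\le t$ and $s>t$ separately, as the definitions \eqref{5.6} and \eqref{5.7} make this natural. In the first case, write $T_{s,t}=\phi_1(sL)f(L)\phi_2(tL)$ with $\phi_1(z):=z^{-a}\psi(z)\in\Psi_{\sz_1-a,\tau_1+a}(S_\mu^0)$ and $\phi_2(z):=z^a\wz\psi(z)\in\Psi_{\sz_2+a,\tau_2-a}(S_\mu^0)$, where the restrictions $a<\sz_1$ and $a<\tau_2$ keep the decay rates positive; the case $s>t$ is symmetric, with $\phi_1(z):=z^b\psi(z)\in\Psi_{\sz_1+b,\tau_1-b}(S_\mu^0)$ and $\phi_2(z):=z^{-b}\wz\psi(z)\in\Psi_{\sz_2-b,\tau_2+b}(S_\mu^0)$. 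Lemma~\ref{l3.3} then furnishes $k$-Davies-Gaffney estimates of the relevant decay-at-$0$ orders for $\{\phi_1(sL)f(L)\}_{s>0}$ (with constant $\ls\|f\|_{L^\fz(S_\mu^0)}$) and for $\{\phi_2(tL)\}_{t>0}$ (with an absolute constant, applying the lemma with $f\equiv1$).

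\textbf{Splitting and combining.} The case $\dist(E,F)=0$ is immediate from the $L^2(\rn)$-boundedness of $\phi_1(sL)f(L)$ and $\phi_2(tL)$ coming from the bounded $H_\fz$ functional calculus of $L$. For $\rho:=\dist(E,F)>0$, I would mimic the proof of Lemma~\ref{l3.2}: set $G:=\{x\in\rn:\dist(x,F)<\rho/2\}$, so that $\dist(E,G)\ge\rho/2$ and $\dist(F,G^\complement)\ge\rho/2$, and decompose
\begin{align*}
T_{s,t}g=\phi_1(sL)f(L)\chi_G\phi_2(tL)g+\phi_1(sL)f(L)\chi_{G^\complement}\phi_2(tL)g.
\end{align*}
The first summand would be estimated by the $L^2$-boundedness of $\phi_1(sL)f(L)$ together with the off-diagonal bound for $\phi_2(tL)g$ on $G$ (using $\dist(E,G)\ge\rho/2$); the second summand by the off-diagonal bound for $\phi_1(sL)f(L)$ (its input $\chi_{G^\complement}\phi_2(tL)g$ being supported at distance $\ge\rho/2$ from $F$) together with the $L^2$-boundedness of $\phi_2(tL)$. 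Converting the resulting factors $\min\{1,s/\rho^{2k}\}$ and $\min\{1,t/\rho^{2k}\}$ into factors of $\min\{1,\max\{t,s\}/\rho^{2k}\}$ using $s\le t$ (resp.\ $s>t$) in the respective cases yields \eqref{5.8}.

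\textbf{Main obstacle.} The delicate point will be extracting the sharp threshold $M<\min\{\sz_2+a,\tau_1+b\}$ stated in the lemma: the direct splitting above also contributes terms of orders $\sz_1-a$ and $\sz_2-b$ which a priori could degrade the bound. To recover the asserted threshold, I would realize the composition as $\Phi(L)$ with $\Phi(\xi):=\phi_1(s\xi)f(\xi)\phi_2(t\xi)\in\Psi(S_\mu^0)$, use \eqref{2.4}--\eqref{2.5} to write $\Phi(L)=\dint_{\Gamma_+}e^{-zL}\eta_+(z)\,dz+\dint_{\Gamma_-}e^{-zL}\eta_-(z)\,dz$, and bound $\eta_\pm$ by a two-scale refinement of the estimate \eqref{3.9} from the proof of Lemma~\ref{l3.3} that isolates the decay driven by the factor of largest scale $\max\{s,t\}$; coupling this with the $k$-Davies-Gaffney estimate for $\{e^{-zL}\}$ from Lemma~\ref{l3.1} then produces the order-$M$ bound for any admissible $M$. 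Convergence of the resulting $\xi$-integrals requires $a,b$ to lie in the (nonempty) subranges $a<\min\{\sz_1,\tau_2-1\}$ and $b<\min\{\sz_2,\tau_1-1\}$.
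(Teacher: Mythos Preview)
Your proposal eventually reaches the right endpoint, but via an unnecessary detour. The Lemma~\ref{l3.2}-style splitting in your second paragraph cannot give the sharp threshold $M<\min\{\sz_2+a,\tau_1+b\}$, as you correctly diagnose: the off-diagonal estimate for $\phi_1(sL)f(L)$ contributes only order $\sz_1-a$, and symmetrically one picks up $\sz_2-b$ in the other regime. Your proposed fix---rewriting $T_{s,t}=\Phi(L)$ via \eqref{2.4}--\eqref{2.5} and performing a ``two-scale refinement'' of \eqref{3.9}---would work in principle, but once carried out it collapses to something far simpler.

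The paper's argument bypasses the splitting entirely. The observation you are missing is that in the case $s\le t$ one needs \emph{no} off-diagonal estimate in the variable $s$: since $a<\sz_1$, the function $\xi\mapsto (s\xi)^{-a}\psi(s\xi)f(\xi)$ lies in $H_\fz(S_\mu^0)$ with norm $\ls\|f\|_{L^\fz(S_\mu^0)}$ \emph{uniformly in $s$}. Hence $T_{s,t}$ is already exactly of the form $\psi'(tL)f'(L)$ to which Lemma~\ref{l3.3} applies in one stroke, with $\psi'(z):=z^a\wz\psi(z)\in\Psi_{\sz_2+a,\tau_2-a}(S_\mu^0)$ and $f'(\xi):=(s\xi)^{-a}\psi(s\xi)f(\xi)$. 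A single invocation of Lemma~\ref{l3.3} delivers the $k$-Davies-Gaffney estimate of order $\sz_2+a$ in $t=\max\{s,t\}$, with constant $\ls\|f\|_{L^\fz}$. The case $s>t$ is symmetric: absorb $f(\xi)(t\xi)^{-b}\wz\psi(t\xi)$ into the bounded factor and read off order $\tau_1+b$ in $s$. Your ``two-scale refinement'' of $\eta_\pm$, once you bound $|\phi_1(s\xi)f(\xi)|\ls\|f\|_{L^\fz}$ inside the integrand, reduces exactly to the one-scale estimate \eqref{3.9} at scale $t$---there is no second scale to track. (The restrictions $a<\tau_2-1$, $b<\tau_1-1$ you flag are those implicit in the hypothesis $\tau>1$ of Lemma~\ref{l3.3} itself, and arise equally in the paper's direct application.)
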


\begin{proof} We prove this lemma by considering two cases. If $s\le t$,
since $a\in(0,\, \min\{\sz_1,\tau_2\})$, we conclude that for all
$\xi\in S_{\mu}^0$,
\begin{eqnarray*}
\lf|\lf(s\xi\r)^{-a}\psi(s\xi)f(\xi)\r|\ls\frac{|s\xi
|^{\sz_1-a}}{1+|s\xi|^{\sz_1+\tau_1}}\|f\|_{L^\fz(\rn)}\ls1
\end{eqnarray*}
and
\begin{eqnarray*}
\lf|\lf(t\xi\r)^{a}\wz\psi(t\xi)\r|\ls\frac{\lf|t\xi\r|^
{\sz_2+a}}{1+|t\xi|^{\sz_2+\tau_2}},
\end{eqnarray*}
which, together with Lemma \ref{l3.3} with $\psi$ and $f$ therein
replaced by $(t\xi)^a\wz\psi(t\xi)$ and
$(s\xi)^{-a}\psi(s\xi)f(\xi)$ respectively, implies that the family
$\{T_{s,t}\}_{s\le t}$ of operators satisfies the $k$-Davies-Gaffney
estimate of order $\sz_2+a$ in $t$.

Similarly, if $s>t$, since $b\in(0,\, \min\{\sz_2,\tau_1\})$, we
see that for all $\xi\in S_\mu^0$,
\begin{eqnarray*}
\lf|f(\xi)\lf(t\xi\r)^{-b}\wz\psi(t\xi)\r|\le\frac{|t\xi|^{\sz_2-b}}
{1+|t\xi|^{\sz_2+\tau_2}}\|f\|_{L^\fz(\rn)}\ls1
\end{eqnarray*}
and
\begin{eqnarray*}
\lf|\lf(s\xi\r)^{b}\psi(s\xi)\r|\le\frac{\lf|s\xi\r|^{\tau_1+b}}
{1+|s\xi|^{\sz_1+\tau_1}},
\end{eqnarray*}
which, together with Lemma \ref{l3.3} with $\psi$ and $f$ therein
replaced by $(s\xi)^b\psi(s\xi)$ and
$f(\xi)(t\xi)^{-b}\wz\psi(t\xi)$ respectively, implies that the
family $\{T_{s,t}\}_{s>t}$ of operators satisfies the
$k$-Davies-Gaffney estimate of order $\tau_1+b$ in $s$.

Thus, for all $M\in (0,\,\min\{\sz_2+a,\,\tau_1+b\})$, we
immediately obtain \eqref{5.8}, which completes the proof of Lemma
\ref{l5.1}.
\end{proof}

\begin{lemma}\label{l5.2}
Let $p\in(0,\,1]$, $L$ be the operator of  type $\omega$ satisfying
the assumptions {\rm(A$_1)$}, {\rm(A$_2)$} and {\rm(A$_3)$} in
Section \ref{s2}, $\az\in(0,\,\fz)$,
$\bz\in(n(1/p-1/2)/(2k),\,\fz)$, $\omega\in[0,\,\pi/2)$,
$\mu\in(\omega,\,\pi/2)$, $\psi\in\Psi_{\az,\bz}(S_{\mu}^0)$ and
$\wz\psi\in\Psi_{\bz,\az}(S_{\mu}^0)$. Then the operator $Q^f$
originally defined in \eqref{5.5} on $T^2(\mathbb{R}^{n+1}_{+})$ can
be continuously extended to a bounded linear operator on
$T^p(\mathbb{R}^{n+1}_{+})$. Moreover, there exists a positive
constant $C$ such that for all $F\in T^p(\mathbb{R}^{n+1}_{+})$ and
$f\in H_\fz(S_\mu^0)$,
\begin{eqnarray}\label{5.9}
\lf\|Q^f F\r\|_{T^p(\mathbb{R}^{n+1}_{+})}\le
C\|f\|_{L^\fz(S_\mu^0)} \|F\|_{T^p(\mathbb{R}^{n+1}_{+})}.
\end{eqnarray}
\end{lemma}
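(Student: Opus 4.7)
The plan is to reduce the problem to $T^p(\rr^{n+1}_+)$-atoms and then exploit the $k$-Davies-Gaffney estimate of order $M$ provided by Lemma \ref{l5.1}. Indeed, Theorem \ref{t4.2} decomposes every $F\in T^p(\rr^{n+1}_+)\cap T^2(\rr^{n+1}_+)$ as $F=\sum_j\lz_jA_j$ (converging in both $T^p$ and $T^2$) with $\sum_j|\lz_j|^p\sim\|F\|_{T^p(\rr^{n+1}_+)}^p$. Since $Q^f$ is bounded on $T^2(\rr^{n+1}_+)$ (as recorded just before the lemma, a consequence of \eqref{4.3} and the bounded $H_\fz$ calculus of $L$), one has $Q^fF=\sum_j\lz_jQ^fA_j$ in $T^2(\rr^{n+1}_+)$. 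The $p$-subadditivity of $\|\cdot\|_{T^p}^p$ for $p\in(0,1]$, together with the density of $T^p\cap T^2$ in $T^p$, then reduces \eqref{5.9} to the uniform bound
$$\lf\|Q^fA\r\|_{T^p(\rr^{n+1}_+)}\ls\lf\|f\r\|_{L^\fz(S_\mu^0)}$$
for every $T^p(\rr^{n+1}_+)$-atom $A$.

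Fix such an $A$ associated with a cube $Q$ of side length $l:= l(Q)$, and let $j_0\in\nn$ satisfy $2^{j_0-1}<\sqrt n\le 2^{j_0}$. The natural annular splitting gives
$$\lf\|Q^fA\r\|_{T^p(\rr^{n+1}_+)}^p=\sum_{j=0}^\fz\int_{S_j(Q)}[\mathcal{A}(Q^fA)(x)]^p\,dx.$$
For the local part $0\le j\le j_0+4$, H\"older's inequality combined with the $T^2$-boundedness of $Q^f$ (with norm controlled by $\|f\|_{L^\fz(S_\mu^0)}$) and the size condition $\|A\|_{T^2(\rr^{n+1}_+)}\le|Q|^{1/2-1/p}$ give an upper bound by a fixed constant multiple of $\|f\|_{L^\fz(S_\mu^0)}^p$.

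The main task is the far part $j\ge j_0+5$, for which I aim at
$$\lf\|\mathcal{A}(Q^fA)\r\|_{L^2(S_j(Q))}\ls\lf\|f\r\|_{L^\fz(S_\mu^0)}\,2^{-j\eta'}\,|Q|^{1/2-1/p}$$
for some $\eta'>n(1/p-1/2)$. Setting $G_t(y,s):=\psi(s^{2k}L)f(L)\wz\psi(t^{2k}L)A(\cdot,t)(y)$, Minkowski's inequality for integrals yields $\|\mathcal{A}(Q^fA)\|_{L^2(S_j(Q))}\le\int_0^l\|\mathcal{A}(G_t)\|_{L^2(S_j(Q))}\,\frac{dt}{t}$. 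Choose parameters $a\in(0,\az)$, $b\in(n(1/p-1/2)/(2k),\bz)$ — possible precisely because $\bz>n(1/p-1/2)/(2k)$ — and then $M\in(n(1/p-1/2)/(2k),\bz+\min\{a,b\})$. The factorization \eqref{5.6}--\eqref{5.7} combined with Lemma \ref{l5.1}, applied with source set $E=Q$ and target set $F_j$ the complement of a slight enlargement of $Q$ chosen so that $\dist(Q,F_j)\sim 2^jl$, furnishes
$$\lf\|G_t(\cdot,s)\r\|_{L^2(F_j)}\ls\lf\|f\r\|_{L^\fz(S_\mu^0)}\lf(\frac{\min(s,t)}{\max(s,t)}\r)^{2k\min\{a,b\}}\lf[\min\lf\{1,\frac{\max(s,t)^{2k}}{(2^jl)^{2k}}\r\}\r]^M\|A(\cdot,t)\|_{L^2(Q)}.$$
Splitting the cone integral at $s_j:= 2^{j-j_0-3}l$ (small $s$ handled by the above off-diagonal bound; for $j\ge j_0+5$ one always has $s>s_j>l\ge t$ in the large-$s$ regime, which is handled by the uniform $L^2$-bound with prefactor $(t/s)^{2kb}$) and then integrating in $s$ produces $\|\mathcal{A}(G_t)\|_{L^2(S_j(Q))}\ls\|f\|_{L^\fz(S_\mu^0)}(t/(2^jl))^{2k\min\{M,b\}}\|A(\cdot,t)\|_{L^2(Q)}$. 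A final Cauchy--Schwarz in $t$ against the tent-space normalization $\int_0^l\|A(\cdot,t)\|_{L^2(Q)}^2\frac{dt}{t}\le|Q|^{1-2/p}$ delivers the claim with $\eta':= 2k\min\{M,b\}>n(1/p-1/2)$.

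Summing, the $j$-th contribution in the far part is $\ls|S_j(Q)|^{1-p/2}(\|f\|_{L^\fz(S_\mu^0)}2^{-j\eta'}|Q|^{1/2-1/p})^p\sim\|f\|_{L^\fz(S_\mu^0)}^p2^{-jp[\eta'-n(1/p-1/2)]}$, where the powers of $l$ cancel thanks to $n(1-p/2)+pn(1/2-1/p)=0$; this is geometrically summable. The main obstacle is precisely this last computation: the splitting of the cone integral in $s$ and the coordinated choice of the parameters $a$, $b$, $M$ within the narrow ranges allowed by Lemma \ref{l5.1} must be delicate enough to ensure $2k\min\{M,b\}>n(1/p-1/2)$, and the hypothesis $\bz>n(1/p-1/2)/(2k)$ is exactly the calibration needed for this to be feasible.
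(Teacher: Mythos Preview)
Your proposal is correct and follows essentially the same strategy as the paper: reduce to $T^p$-atoms via Theorem \ref{t4.2} and the $T^2$-boundedness of $Q^f$, then use the factorization \eqref{5.6}--\eqref{5.7} together with Lemma \ref{l5.1} and a Minkowski--Cauchy--Schwarz argument in $t$ to obtain geometric decay in the annular index; the paper organizes the same computation with Carleson-box annuli $S_i(R_Q)$ in $\rr^{n+1}_+$ rather than your spatial annuli $S_j(Q)$ plus a height-splitting at $s_j$, but the two decompositions are equivalent. Two small points: your description of $F_j$ as ``the complement of a slight enlargement of $Q$'' should read ``a slight enlargement of $S_j(Q)$'' (what matters, as you say, is $\dist(Q,F_j)\sim 2^jl$); and the passage from ``$Q^fF=\sum_j\lz_jQ^fA_j$ in $T^2$'' to the $p$-subadditive bound is not immediate---the paper extracts an a.e.\ pointwise inequality $|Q^fF|\le\sum_j|\lz_jQ^fA_j|$ via a Riesz-theorem subsequence argument (their \eqref{5.11}), which you should include in a complete write-up.
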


\begin{proof}
By the density of $T^2(\mathbb{R}^{n+1}_{+})\cap
T^p(\mathbb{R}^{n+1}_{+})$ in $T^p(\mathbb{R}^{n+1}_{+})$ (see
\cite{cms}), it suffices to prove \eqref{5.9} for all $F\in
T^2(\mathbb{R}^{n+1}_{+})\cap T^p(\mathbb{R}^{n+1}_{+})$. To this
end, by borrowing some ideas from the proof of Theorem 1.1 in
\cite{yy}, we only need to show that for all
$T^p(\mathbb{R}^{n+1}_+)$-atoms $A$,
\begin{eqnarray}\label{5.10}
\lf\|Q^f A\r\|_{T^p(\mathbb{R}^{n+1}_{+})}\ls
\|f\|_{L^\fz(S_\mu^0)}.
\end{eqnarray}
Indeed, if \eqref{5.10} holds, then from Theorem \ref{t4.2} and
the $T^2(\mathbb{R}^{n+1}_{+})$-boundedness of $Q^fA$, it follows
that for any $F\in T^2(\mathbb{R}^{n+1}_+)\cap
T^p(\mathbb{R}^{n+1}_+)$, there exist a sequence
$\{A_j\}_{j=0}^\fz$ of $T^p(\mathbb{R}^{n+1}_+)$-atoms and
$\{\lz_j\}_{j=0}^\fz\in l^p$ such that $F:=\sum_{j=0}^\fz
\lz_jA_j$ with the summation converges in both pointwise and
$T^2(\mathbb{R}^{n+1}_{+})$, and $\{\sum_{j=0}^\fz
|\lz_j|^p\}^{1/p}\sim \|F\|_{T^p(\mathbb{R}^{n+1}_{+})}$. We claim
that for $\frac{dx dt}{t}$-almost every $(x,\,t)\in
\mathbb{R}^{n+1}_{+}$,
\begin{eqnarray}\label{5.11}
\lf|Q^f \lf(\dsum_{j=0}^\fz\lz_jA_j\r)(x,t)\r|\le\dsum_{j=0}^\fz
\lf|\lz_jQ^f A_j(x,\,t)\r|.
\end{eqnarray}
Assume this claim for the moment. By \eqref{5.11} and $p\in(0,1]$,
together with the monotonicity of $l^p$, we have
\begin{eqnarray*}
\lf\|Q^f F\r\|_{T^p(\mathbb{R}^{n+1}_{+})}&&\le
\lf\{\dsum_{j=0}^\fz|\lz_j|^p\|Q^fA_j\|_{T^p
(\mathbb{R}^{n+1}_+)}^p\r\}^{1/p}\\
&&\le\dsup_{j\in\zz_+}
\lf\{\|Q^f(A_j)\|_{T^p(\mathbb{R}^{n+1}_+)}\r\}\lf\{\dsum_{j=0}^\fz
|\lz_j|^p\r\}^{1/p}\\
&&\ls\|f\|_{L^\fz(S_\mu^0)}\lf\{\dsum_{j=0}^\fz
|\lz_j|^p\r\}^{1/p}\sim
\|f\|_{L^\fz(S_\mu^0)}\|F\|_{T^p(\mathbb{R}^{n+1}_{+})}.
\end{eqnarray*}
That is, $Q^f$ is bounded on $T^p(\mathbb{R}^{n+1}_+)$. To show
the claim \eqref{5.11}, for simplicity of the notation, let
$d\mu(x,\,t):= \frac{dx\,dt}{t}$ for all
$(x,\,t)\in\mathbb{R}^{n+1}_+$. By the
$T^2(\mathbb{R}^{n+1}_+)$-boundedness of $Q^f$ and the
$T^2(\mathbb{R}^{n+1}_+)$-convergence of $F=\sum_{j=0}^\fz
\lz_jA_j$, we conclude that for any $\eta\in(0,\,\fz)$,
\begin{eqnarray*}
\dlim_{N\to\fz}\mu\lf(\lf\{x\in\rn:\
\lf|Q^f\lf(\dsum_{i=N+1}^\fz\lz_jA_j\r)\r|
>\eta\r\}\r)\ls\dlim_{N\to\fz}\frac{1}{\eta^2}\lf\|
\dsum_{i=N+1}^\fz\lz_jA_j\r\|_{T^2(\mathbb{R}^{n+1}_+)}^2=0.
\end{eqnarray*}
This, combined with the Riesz theorem, implies that there exists a
subsequence
$$\lf\{Q^f\lf(\sum_{j=N_\ell+1}^\fz\lz_jA_j\r)\r\}_{l\in\nn}$$
of $\{Q^f(\sum_{j=N+1}^\fz\lz_jA_j)\}_{N\in\nn}$ such that for
$\mu$-almost every $(x,\,t)\in\mathbb{R}^{n+1}_+$,
\begin{eqnarray*}
\dlim_{\ell\to\fz}Q^f\lf(\dsum_{j=N_\ell+1}^\fz\lz_jA_j\r)(x,\,t)=0,
\end{eqnarray*}
where $\{N_\ell\}_{\ell\in\nn}\subset \nn$ and
$\lim_{\ell\to\fz}N_\ell=\fz$. Therefore, for $\mu$-almost every
$(x,\,t)\in\mathbb{R}^{n+1}_+$ and all $\ell\in\nn$,
\begin{eqnarray*}
\lf|Q^f\lf(\dsum_{j=0}^\fz\lz_j A_j\r)(x,\,t)\r|
&&\le\dsum_{j=0}^{N_\ell}\lf|\lz_jQ^fA_j(x,\,t)\r|
+\lf|Q^f\lf(\dsum_{j=N_\ell+1}^{\fz}\lz_jA_j\r)(x,\,t)\r|,
\end{eqnarray*}
which, together with letting $\ell\to\fz$, shows the claim
\eqref{5.11}.

To finish the proof of Lemma \ref{l5.2}, we still need to prove
\eqref{5.10}. By the homogeneity of the norm
$\|\cdot\|_{T^p(\rr_+^{n+1})}$, without loss of generality, we may
assume that $\|f\|_{L^\fz(S_\mu^0)}=1$. Let $Q$ be the cube
associated with the $T^p(\mathbb{R}^{n+1}_+)$-atom $A$ and $R_Q:=
Q\times (0,\,l(Q))$, where $l(Q)$ denotes the \emph{side length}
of $Q$. For all $i\in\nn$, set
$2^iR_Q:=2^iQ\times(0,\,2^il(Q))\subset \mathbb{R}^{n+1}_{+}$ and
$S_i(R_Q) :=2^iR_Q\setminus(2^{i-1}R_Q)$.

For $i=1$, by H\"older's inequality, the
$T^2(\mathbb{R}^{n+1}_{+})$-boundedness of $Q^f$ and the size
condition \eqref{4.10} of $T^p(\mathbb{R}^{n+1}_+)$-atoms, we see
that
\begin{eqnarray}\label{5.12}
\lf\|\chi_{2R_Q}Q^f A\r\|_{T^p(\mathbb{R}^{n+1}_{+})}&&=
\lf\|\mathcal{A}(\chi_{2R_Q}Q^f A)\r\|_{L^p(\rn)}\nonumber\\ \nonumber
&&\le\lf\|\mathcal{A}(\chi_{2R_Q}Q^f A)\r\|_{L^2(\rn)}
\lf|2(\sqrt n+2)Q\r|^{1/p-1/2}\\
&&\ls\lf\{\iint_{R_Q}|A(x,\,t)|^2\frac{dx\,dt}{t}\r\}^{1/2}
\lf|Q\r|^{1/p-1/2}\ls1.
\end{eqnarray}

For $i\ge2$, using H\"older's inequality and Fubini's theorem, we
then conclude that
\begin{eqnarray*}
\lf\|\chi_{S_i(R_Q)}Q^f A\r\|_{T^p(\mathbb{R}^{n+1}_{+})}
&&=\lf\|\mathcal{A}(\chi_{S_i(R_Q)}Q^f A)\r\|_{L^p(\rn)}
\ls\lf\|\mathcal{A}(\chi_{S_i(R_Q)}Q^f A)\r\|_{L^2(\rn)}
\lf|2^i(2+\sqrt n)Q\r|^{\frac{1}{p}-\frac{1}{2}}\\
&&\sim\lf\{\lf[\dint_0^{2^{i-1}l(Q)}\dint_{\rn}
\chi_{S_i(R_Q)}(x,\,s)\lf|Q^f A(x,\,s)\r|^2
\frac{dx\,ds}{s}\r]^{1/2}\r.\\
&&\hs\lf.+\lf[\dint_{2^{i-1}l(Q)}^{2^{i}l(Q)}\dint_{\rn}\cdots
\r]^{1/2}\r\}\lf|2^iQ\r|^{\frac{1}{p}-\frac{1}{2}}\\
&&=:\lf\{\mathrm{I}+\mathrm{O}\r\}
\lf|2^iQ\r|^{\frac{1}{p}-\frac{1}{2}}.
\end{eqnarray*}

To estimate $\mathrm{O}$, from \eqref{5.5}, Minkowski's inequality,
Fubini's inequality, Lemma \ref{l5.1} and H\"older's inequality, we
deduce that
\begin{eqnarray*}
\mathrm{O}&&\sim\lf\{\dint_{2^{i-1}l(Q)}^{2^{i}l(Q)}\dint_\rn
\chi_{S_i(R_Q)}(x,\,s)\lf|Q^f A(x,\,s)\r|^2\,dx\,\frac{ds}{s}\r\}^{1/2}\\
&&\sim\lf\{\dint_{2^{i-1}l(Q)}^{2^{i}l(Q)}\dint_\rn\chi_{S_j(R_Q)}(x,\,s)
\lf|\dint_0^\fz\psi(s^{2k}L)f(L)\wz\psi(t^{2k}L)
A(x,\,t)\,\frac{dt}{t}\r|^2\,dx\,\frac{ds}{s}\r\}^{1/2}\\
&&\sim\lf\{\dint_{2^{i-1}l(Q)}^{2^{i}l(Q)}\dint_\rn\chi_{S_i(R_Q)}(x,\,s)
\lf|\dint_{0}^\fz\lf(\frac{t}{s}
\r)^{2kb}T_{s^{2k},t^{2k}}A(x,\,t)\,\frac{dt}{t}
\r|^2\,dx\,\frac{ds}{s}\r\}^{1/2}\\
&&\ls\dint_{0}^\fz\lf[\frac{t}{2^il(Q)}\r]^{2kb}
\lf[\dint_{2^{i-1}l(Q)}^{2^{i}l(Q)}
\dint_\rn\lf|T_{s^{2k},t^{2k}}A(x,\,t)\r|^2
\chi_{S_i(R_Q)}(x,\,s)\,dx\,\frac{ds}{s}\r]^{1/2}\,\frac{dt}{t}\\
&&\ls\dint_{0}^{l(Q)}\lf[\frac{t}{2^il(Q)}\r]^{2kb}
\lf\|A(\cdot,\,t)\r\|_{L^2(\rn)} \,\frac{dt}{t}\\
&&\ls\lf\{\dint_0^{l(Q)}\lf\|A(\cdot,\,t)\r\|_{L^2(\rn)}^2
\frac{dt}{t}\r\}^{1/2}\lf\{\dint_0^{l(Q)}\lf[\frac{t}
{2^il(Q)}\r]^{4kb}\frac{dt}{t}\r\}^{1/2}\\
&&\ls2^{-2ikb}|Q|^{1/2-1/p}\sim
2^{-i[2kb-n(1/p-1/2)]}|2^iQ|^{1/2-1/p} \sim
2^{-i\gz_1}|2^iQ|^{1/2-1/p},
\end{eqnarray*}
where $b\in(n(1/p-1/2)/(2k),\,\bz)$ and $\gz_1:=2kb-n(1/p-1/2)>0$.

Let $a\in(0,\,\az)$. To estimate $\mathrm{I}$, by Fubini's theorem
and Minkowski's inequality, we see that
\begin{eqnarray*}
\mathrm{I} &&\sim\lf\{\dint_0^{2^{i-1}l(Q)}\dint_\rn
\chi_{S_i(R_Q)}(x,\,s)
\lf|Q^f A(x,\,s)\r|^2\,dx\,\frac{ds}{s}\r\}^{1/2}\\
&&\sim\lf\{\dint_0^{2^{i-1}l(Q)}\dint_\rn\chi_{S_i(R_Q)}(x,\,s)
\lf|\dint_0^\fz\psi(s^{2k}L)f(L)\wz\psi(t^{2k}L)A(x,\,t)
\,\frac{dt}{t}\r|^2\,dx\,\frac{ds}{s}\r\}^{1/2}\\
&&\sim\lf\{\dint_0^{2^{i-1}l(Q)}
\dint_\rn\chi_{S_i(R_Q)}(x,\,s)\lf|\dint_0^\fz\min\lf\{\lf(\frac{s}{t}\r)^{2ka},
\,\lf(\frac{t}{s}\r)^{2kb}\r\} T_{s^{2k},t^{2k}}A(x,\,t)
\,\frac{dt}{t}\r|^2\,dx\,\frac{ds}{s}\r\}^{1/2}\\
&&\ls\dint_0^{l(Q)} \lf\{\dint_0^t\dint_\rn
\lf(\frac{s}{t}\r)^{4ka}\lf|T_{s^{2k},t^{2k}}A(x,\,t)
\r|^2\chi_{S_i(R_Q)}(x,\,s)\,dx\,\frac{ds}{s}\r\}^{1/2}\frac{dt}{t}\\
&&\hs+\dint_0^{l(Q)}\lf\{\dint_t^{2^{i-1}l(Q)}\dint_\rn
\lf(\frac{t}{s}\r)^{4kb}\lf|T_{s^{2k},t^{2k}}A(x,\,t)\r|^2
\chi_{S_i(R_Q)}(x,\,s)\,dx\,\frac{ds}{s}\r\}^{1/2}\frac{dt}{t}\\
&&=: \mathrm{I}_1+\mathrm{I}_{2}.
\end{eqnarray*}
Let $M\in(n(1/p-1/2)/(2k),\,\min\{\az+b,\,\bz+a\})$. It follows,
from Lemma \ref{l5.1} and H\"older's inequality, that
\begin{eqnarray*}
\mathrm{I}_1&&\ls\dint_0^{l(Q)} \lf[\dint_0^t
\lf(\frac{s}{t}\r)^{4ka}\lf\{\frac{t^{2k}}
{[\dist(R_Q,\,S_i(R_Q))]^{2k}}\r\}^{2M}
\lf\|A(\cdot,\,t)\r\|_{L^2(\rn)}^2\,\frac{ds}s\r]^{1/2}\frac{dt}{t}\\
&&\ls\dint_0^{l(Q)}\lf[\dint_0^t \lf(\frac{s}{t}\r)^{4ka}
\lf\{\frac{t^{2k}}{[2^il(Q)]^{2k}}\r\}^{2M}
\lf\|A(\cdot,\,t)\r\|_{L^2(\rn)}^2\,\frac{ds}s\r]^{1/2}\frac{dt}{t}\\
&&\sim\frac{1}{[2^il(Q)]^{2kM}}\dint_0^{l(Q)}\lf\|A(\cdot,\,t)
\r\|_{L^2(\rn)}t^{2kM}\lf[\dint_0^t\lf(\frac{s}{t}\r)^{4ka}\,\frac{ds}
{s}\r]^{1/2}\,\frac{dt}{t}\\
&&\ls\frac{1}{[2^il(Q)]^{2kM}}\lf\{\iint_{R_Q}\lf|A(x,\,t)\r|^2
\,\frac{dx\,dt}{t}\r\}^{1/2}
\lf\{\dint_0^{l(Q)}t^{4kM}\,\frac{dt}{t}\r\}^{1/2}\\
&&\sim
2^{-2ikM}\lf\{\iint_{R_Q}|A(x,\,t)|^2\,\frac{dx\,dt}{t}\r\}^{1/2}
\ls2^{-i[2kM-n(1/p-1/2)]}|2^iQ|^{1/p-1/2}\sim2^{-i\gz_2}
|2^iQ|^{1/p-1/2},
\end{eqnarray*}
where $\gz_2:= 2kM-n(1/p-1/2)>0$.

For $\mathrm{I}_{2}$, via some similar calculations to the
estimate of $\mathrm{I}_1$, we see that
\begin{eqnarray*}
\mathrm{I}_{2} &&\ls\dint_0^{l(Q)}\lf[\dint_t^{2^il(Q)}
\lf(\frac{t}{s}\r)^{4kb}
\lf\{\frac{s^{2k}}{[2^il(Q)]^{2k}}\r\}^{2M}\,\frac{ds}{s}\r]^{1/2}
\lf\|A(\cdot,\,t)\r\|_{L^2(\rn)}\,\frac{dt}{t}\\
&&\ls\dint_0^{l(Q)}\lf\{\lf[\frac{t}{2^il(Q)}\r]^{2kb}+
\lf[\frac{t}{2^il(Q)}\r]^{2kM}\r\}
\lf\|A(\cdot,\,t)\r\|_{L^2(\rn)} \,\frac{dt}{t}\\
&&\ls\lf\{\iint_{R_Q}\lf|A(x,\,t)\r|^2\,\frac{dx\,dt}{t}\r\}^{1/2}
\lf\{\dint_0^{l(Q)}\lf(\lf[\frac{t}{2^il(Q)}\r]^{4kb}+
\lf[\frac{t}{2^il(Q)}\r]^{4kM}\r)\,\frac{dt}{t}\r\}^{1/2}\\
&&\ls\lf(2^{-2ikb}+2^{-2ikM}\r)|Q|^{1/2-1/p}
\sim\lf(2^{-i\gz_1}+2^{-i\gz_2}\r)\lf|2^iQ\r|^{1/2-1/p}.
\end{eqnarray*}

Combining  the estimates of  $\mathrm{I}_1$ and $\mathrm{I}_2$, we
obtain
\begin{eqnarray}\label{5.13}
\mathrm{O}\ls\lf(2^{-i\gz_1}+2^{-i\gz_2}\r)|2^iQ|^{1/2-1/p}.
\end{eqnarray}
By \eqref{5.12} and \eqref{5.13}, we conclude that
\begin{eqnarray*}
\lf\|Q^f A\r\|_{T^p(\mathbb{R}^{n+1}_{+})}^p
&&\ls\lf\|\chi_{2R_Q}Q^f A\r\|_{T^p(\mathbb{R}^{n+1}_{+})}^p+
\dsum_{i=2}^\fz\lf\|\chi_{S_i(R_Q)}Q^f A\r\|_{T^p(\mathbb{R}^{n+1}_{+})}^p
\ls 1+\dsum_{i=2}^\fz\lf(2^{-i\gz_1p}+2^{-i\gz_2p}\r) \ls1.
\end{eqnarray*}
Thus, \eqref{5.10} holds, which completes the proof of Lemma
\ref{l5.2}.
\end{proof}

As an application of Lemma \ref{l5.2}, we obtain the following
boundedness of $Q_{\psi,L}$ and $\pi_{\psi,L}$.

\begin{lemma}\label{l5.3}
Let $p\in(0,\,1]$, $\omega\in[0,\,\pi/2)$, $L$ be the operator of
type $\omega$ satisfying the assumptions {\rm(A$_1)$}, {\rm(A$_2)$}
and {\rm(A$_3)$} in Section \ref{s2}, $\az\in(0,\,\fz)$,
$\bz\in(n(1/p-1/2)/(2k),\,\fz)$ and $\mu\in(\omega,\,\pi/2)$. Then

\begin{enumerate}
\item[{\rm(i)}] the operator $Q_{\psi,L}$, originally defined on
$L^2(\rn)$ as in \eqref{5.1} with
$\psi\in\Psi_{\az,\bz}(S_\mu^0)$, can be extended to a bounded
linear operator from $H_L^p (\rn)$ to $T^p(\mathbb{R}^{n+1}_{+})$.

\item[{\rm(ii)}] the operator $\pi_{\psi,L}$, originally defined
on $T^2 (\mathbb{R}^{n+1}_{+})$ as in \eqref{5.4} with
$\psi\in\Psi_{\bz,\az}(S_\mu^0)$, can be extended to a bounded
linear operator from $T^p(\mathbb{R}^{n+1}_{+})$ to $H_L^p(\rn)$.
\end{enumerate}
\end{lemma}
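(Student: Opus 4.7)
The plan is to reduce both halves of Lemma \ref{l5.3} to Lemma \ref{l5.2} by taking the auxiliary function there to be $f\equiv 1\in H_\fz(S_\mu^0)$, in which case the composite operator in \eqref{5.5} simplifies to $Q^1=Q_{\psi,L}\circ\pi_{\wz\psi,L}$. The bridge between the square function norm and $Q_{\psi,L}$ is the identification $S_L g(x)=\mathcal{A}(Q_{\phi,L}g)(x)$ with $\phi(z):=ze^{-z}$, so that $\|g\|_{H_L^p(\rn)}=\|Q_{\phi,L}g\|_{T^p(\mathbb{R}^{n+1}_{+})}$ for every $g\in L^2(\rn)$; since $\phi\in\Psi_{1,\tau}(S_\mu^0)$ for any $\tau\in(0,\fz)$, this $\phi$ is an admissible window wherever Lemma \ref{l5.2} is invoked.

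For part (i), I would start from the Calder\'on reproducing formula already used at the end of Section \ref{s4}. Rewritten with $\wz\psi(z):=z^{M+1}e^{-z}$ and $\phi$ as above, it reads, for every $g\in L^2(\rn)$,
\begin{eqnarray*}
g=C_9\dint_0^\fz\wz\psi(t^{2k}L)\phi(t^{2k}L)g\,\frac{dt}{t}=C_9\,\pi_{\wz\psi,L}\lf(Q_{\phi,L}g\r).
\end{eqnarray*}
Applying $Q_{\psi,L}$ with $g:= f\in\mathbb{H}_L^p(\rn)$ yields $Q_{\psi,L}f=C_9\,Q^1F$, where $F:=Q_{\phi,L}f$ satisfies $\|F\|_{T^p(\mathbb{R}^{n+1}_{+})}=\|f\|_{H_L^p(\rn)}$. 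Choosing any integer $M$ with $M+1\ge\bz$ forces $\wz\psi\in\Psi_{\bz,\az}(S_\mu^0)$, which is exactly the inner-function hypothesis of Lemma \ref{l5.2}, while $\psi\in\Psi_{\az,\bz}(S_\mu^0)$ plays the outer role. Lemma \ref{l5.2} then delivers $\|Q^1F\|_{T^p(\mathbb{R}^{n+1}_{+})}\ls\|F\|_{T^p(\mathbb{R}^{n+1}_{+})}$, whence $\|Q_{\psi,L}f\|_{T^p(\mathbb{R}^{n+1}_{+})}\ls\|f\|_{H_L^p(\rn)}$, and the extension to all of $H_L^p(\rn)$ follows from the density in Definition \ref{d4.1}.

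For part (ii), I would take $F\in T^p(\mathbb{R}^{n+1}_{+})\cap T^2(\mathbb{R}^{n+1}_{+})$ and exploit the $T^2$-to-$L^2$ boundedness of $\pi_{\psi,L}$ recorded in the discussion following \eqref{5.5} to conclude that $\pi_{\psi,L}F\in L^2(\rn)$ and
\begin{eqnarray*}
\lf\|\pi_{\psi,L}F\r\|_{H_L^p(\rn)}=\lf\|Q_{\phi,L}\pi_{\psi,L}F\r\|_{T^p(\mathbb{R}^{n+1}_{+})}=\lf\|Q^1 F\r\|_{T^p(\mathbb{R}^{n+1}_{+})},
\end{eqnarray*}
where now the outer function in $Q^1$ is $\phi$ and the inner one is $\psi$. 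To apply Lemma \ref{l5.2}, I would work with the reduced indices $\az':=\min\{\az,1\}$ and $\bz':=\bz$: then $\phi\in\Psi_{\az',\bz'}(S_\mu^0)$ trivially, $\psi\in\Psi_{\bz,\az}(S_\mu^0)\subset\Psi_{\bz',\az'}(S_\mu^0)$, and the critical threshold $\bz'>n(1/p-1/2)/(2k)$ is preserved. Lemma \ref{l5.2} again gives $\|Q^1 F\|_{T^p(\mathbb{R}^{n+1}_{+})}\ls\|F\|_{T^p(\mathbb{R}^{n+1}_{+})}$, and the density of $T^p(\mathbb{R}^{n+1}_{+})\cap T^2(\mathbb{R}^{n+1}_{+})$ in $T^p(\mathbb{R}^{n+1}_{+})$ from \cite{cms} completes the extension.

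I expect the only real subtlety to be the index bookkeeping for Lemma \ref{l5.2}: one must match the pair $(\az,\bz)$ to the two slots so that the molecular threshold $\bz>n(1/p-1/2)/(2k)$ lands on the correct function, namely on $\wz\psi$ in part (i) and on $\psi$ itself in part (ii). Once this matching is arranged, both statements collapse to a single application of Lemma \ref{l5.2} with $f\equiv 1$, and no estimates beyond the reproducing formula and the $T^2$--$L^2$ mapping properties of $Q_{\phi,L}$ and $\pi_{\wz\psi,L}$ are needed.
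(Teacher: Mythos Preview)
Your proposal is correct and follows essentially the same route as the paper: both parts are reduced to Lemma \ref{l5.2} with $f\equiv 1$ via the Calder\'on reproducing formula and the identity $\|g\|_{H_L^p(\rn)}=\|Q_{\phi,L}g\|_{T^p(\mathbb{R}^{n+1}_{+})}$ for $\phi(z)=ze^{-z}$. Your explicit index reduction $\az':=\min\{\az,1\}$ in part (ii) makes the parameter matching with Lemma \ref{l5.2} slightly more transparent than the paper's terse ``since $\psi_0\in\Psi_{1,\bz}(S_\mu^0)$'', but the argument is otherwise the same.
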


\begin{proof} The proof of Lemma \ref{l5.3} is quite similar
to that of \cite[Proposition 4.9]{hmm}. For the convenience of the
reader, we present the details. We first recall a Calder\'on
reproducing formula from \cite[(4.12)]{hmm}. For all
$\psi\in\Psi(S_\mu^0)$, there exists a function $\wz \psi\in
\Psi(S_\mu^0)$ such that
\begin{eqnarray*}
\dint_0^\fz\psi(t)\wz\psi(t)\,\frac{dt}{t}=1.
\end{eqnarray*}
Moreover, we have
\begin{eqnarray}\label{5.14}
\pi_{\psi,L}\circ Q_{\wz\psi,L}=\pi_{\wz\psi,L} \circ Q_{\psi,L}=I\
\ \text{in}\ L^2(\rn).
\end{eqnarray}
In particular, let $\psi_0(z):= z e^{-z}$ for all $z\in S_\mu^0$. We
then choose $\wz\psi_0(z):= C(M)z^{M}e^{-z}$ for all $z\in S_\mu^0$
such that $\wz\psi_0\in\Psi_{M,N}(S_\mu^0)$ for any $N\in(0,\,\fz)$,
where $M$ is the smallest positive integer larger than
$n(1/p-1/2)/(2k)$ and $C(M)\int_0^\fz t^Me^{-2t}\,dt=1$.

By Definition \ref{d5.1} and \eqref{5.1}, we see that for all
$f\in H_L^p(\rn) \cap L^2(\rn)$,
\begin{eqnarray*}
\|Q_{\psi_0,L}f\|_{T^p(\mathbb{R}^{n+1}_{+})}=\|f\|_{H_L^p(\rn)},
\end{eqnarray*}
which implies that $Q_{\psi_0,L}$ is bounded from $H_L^p(\rn)$ to
$T^p(\mathbb{R}^{n+1}_{+})$.  For all
$\psi\in\Psi_{\az,\bz}(S_\mu^0)$, by this, together with the
Calder\'on reproducing formula \eqref{5.14} and Lemma \ref{l5.2}
with $f:=1$ therein, we conclude that for all $f\in H_L^p(\rn)$,
\begin{eqnarray*}
\|Q_{\psi,L}f\|_{T^p(\mathbb{R}^{n+1}_{+})}\sim
\lf\|Q_{\psi,L}\circ\pi_{\wz\psi_0,L}\circ Q_{\psi_0,L}f\r\|
_{T^p(\mathbb{R}^{n+1}_{+})}
\ls\|Q_{\psi_0,L}f\|_{T^p(\mathbb{R}^{n+1}_{+})}\sim\|f\|_{H_L^p(\rn)}.
\end{eqnarray*}
That is, $Q_{\psi,L}$ is bounded from $H_L^p(\rn)$ to
$T^p(\mathbb{R}^{n+1}_{+})$, which completes the proof of (i).

On the other hand, for all $\psi\in\Psi_{\bz,\az}(S_\mu^0)$, since
$\psi_0\in \Psi_{1,\bz}(S_\mu^0)$, it follows from Lemma \ref{l5.2}
with $f:=1$ therein that for all $F\in T^p(\mathbb{R}^{n+1}_{+})\cap
T^2(\mathbb{R}^{n+1}_{+})$,
\begin{eqnarray*}
\|\pi_{\psi,L}F\|_{H_L^p(\rn)}=\|Q_{\psi_0,L}\circ
\pi_{\psi,L}F\|_{T^p(\mathbb{R}^{n+1}_{+})}\ls\|F\|_{
T^p(\mathbb{R}^{n+1}_{+})},
\end{eqnarray*}
which shows that $\pi_{\psi,L}$ is bounded from
$T^p(\mathbb{R}^{n+1}_{+})$ to $H_L^p(\rn)$. This finishes  the
proof of  (ii) and hence Lemma \ref{l5.3}.
\end{proof}

\begin{proof}[Proof of Theorem \ref{t5.1}]
By Definitions \ref{d4.1} and \ref{d5.1}, to show Theorem
\ref{t5.1}, it suffices to prove that
$\mathbb{H}_L^p(\rn)=\mathbb{H}_{\psi,L}^p(\rn)$ with equivalent
norms.

The inclusion
$\mathbb{H}_L^p(\rn)\subset\mathbb{H}_{\psi,L}^p(\rn)$ is an easy
consequence of the boundedness of $Q_{\psi,L}$ from $H_L^p(\rn)$
to $T^p(\mathbb{R}^{n+1}_{+})$, which is true by Lemma
\ref{l5.3}(i). We now prove
$\mathbb{H}_{\psi,L}^p(\rn)\subset\mathbb{H}_L^p(\rn)$. Let
$\psi_0(z):= ze^z$ for all $z\in S_\mu^0$. Observe that for any
$\psi\in \Psi_{\az,\bz}(S_\mu^0)$, we can choose $\wz\psi(z):=\wz
C(M)z^{M}e^{-z}$ for all $z\in S_\mu^0$ such that \eqref{5.14}
holds, where $\wz C(M)$ is a constant such that $\wz
C(M)\int_0^\fz t^{M-1}e^{-t}\psi(t)\,dt=1$. From \eqref{5.14},
Lemma \ref{l5.2} with $f:=1$ therein, and Lemma \ref{l5.3}(i), we
infer that for all $f\in\mathbb{H}_{\psi,L}^p(\rn)$,
\begin{eqnarray*}
\|f\|_{H_L^p(\rn)}=\|Q_{\psi_0,L}f\|_{T^p(\mathbb{R}^{n+1}_{+})}
=\|Q_{\psi_0,L}\circ\pi_{\wz\psi,L}
\circ Q_{\psi,L}f\|_{T^p(\mathbb{R}^{n+1}_{+})}\ls \|Q_{\psi,L}f\|_{T^p(\mathbb{R}^{n+1}_{+})}\sim
\|f\|_{H_{\psi,L}^p(\rn)},
\end{eqnarray*}
which implies that
$\mathbb{H}_{\psi,L}^p(\rn)\subset\mathbb{H}_L^p(\rn)$. This
finishes the proof of Theorem \ref{t5.1}.
\end{proof}

\section{Riesz transforms on $H_{L^i}^p(\rn)$ for
$i\in{\{1,\,2\}}$}\label{s6}

In this section, for the $2k$-order divergence form
homogeneous elliptic operator $L_1$ with complex bounded measurable
coefficients and the $2k$-order Schr\"odinger type operator $L_2$,
we  consider the behavior of their Riesz transforms
$\nabla^k{{L_i}^{-1/2}}$ on the Hardy space $H_{L_i}^p(\rn)$,
respectively for $i\in\{1,\,2\}$. First, we study the boundedness of
$\nabla^k{{L_i}^{-1/2}}$ on $H_{L_i}^p(\rn)$ for $i\in\{1,\,2\}$. To
this end, we need the following useful estimates.

\begin{lemma}\label{l6.1}
Let $p\in(0,\,1]$, $M$,\,$k\in\nn$, $L_1$ be the  $2k$-order
divergence form  homogenous elliptic operator with complex bounded
measurable coefficients and $L_2$ the $2k$-order Schr\"odinger type
operator. Then, there exists a positive constant $C$ such that for
all $i\in\{1,\,2\}$, closed sets $E$, $F$ in $\rn$ with
$\dist(E,\,F)>0$, $f\in L^2(\rn)$ supported in $E$ and
$t\in(0,\,\fz)$,
\begin{eqnarray}\label{6.1}
\lf\|\nabla^{k}{L_i}^{-1/2}\lf(I-e^{-t{L_i}}\r)^{M}f\r\|_{L^2(F)}\le
C \lf(\frac{t}{\lf[\dist(E,\,F)\r]^{2k}}\r)^M\|f\|_{L^2(E)}
\end{eqnarray}
and
\begin{eqnarray}\label{6.2}
\lf\|\nabla^{k}{L_i}^{-1/2}\lf(t{L_i}e^{-t{L_i}}\r)^{M}f\r\|_{L^2(F)}\le
C \lf(\frac{t}{\lf[\dist(E,\,F)\r]^{2k}}\r)^M\|f\|_{L^2(E)}.
\end{eqnarray}
\end{lemma}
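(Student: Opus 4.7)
The plan is to reduce estimates \eqref{6.1} and \eqref{6.2} to the building-block results of Section \ref{s3}: the $k$-Davies-Gaffney estimate for the gradient semigroup $\{\sqrt{t}\,\nabla^k e^{-tL_i}\}_{t>0}$ in Proposition \ref{p3.2}, the $k$-Davies-Gaffney estimate of order $\sz$ for families $\{\psi(tL_i)\}_{t>0}$ with $\psi\in\Psi_{\sz,\tau}(S_\mu^0)$ in Lemma \ref{l3.3}, and the composition-splitting argument from the proof of Lemma \ref{l3.2}.

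For \eqref{6.2}, the identity $(tL_ie^{-tL_i})^M=t^M L_i^M e^{-MtL_i}$ together with the $H_\fz$ functional calculus yields $\nabla^k L_i^{-1/2}(tL_ie^{-tL_i})^Mf=\sqrt{t}\,\nabla^k\psi(tL_i)f$ with $\psi(z):= z^{M-1/2}e^{-Mz}\in\Psi_{M-1/2,\,\fz}(S_\mu^0)$. I factor $\psi(z)=e^{-z/2}\wz\psi(z)$ with $\wz\psi(z):= z^{M-1/2}e^{-(M-1/2)z}\in\Psi_{M-1/2,\,\fz}(S_\mu^0)$, so that $\nabla^k L_i^{-1/2}(tL_ie^{-tL_i})^Mf=\lf[\sqrt{t}\,\nabla^k e^{-tL_i/2}\r]\wz\psi(tL_i)f$. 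By Proposition \ref{p3.2} the first factor satisfies the $k$-Davies-Gaffney estimate; by Lemma \ref{l3.3} (applied with $f\equiv 1$) the second satisfies the $k$-Davies-Gaffney estimate of order $M-1/2$. I then mimic the splitting argument in the proof of Lemma \ref{l3.2}: with $G:=\{x\in\rn:\,\dist(x,F)<\dist(E,F)/2\}$, I decompose $\wz\psi(tL_i)f=\chi_G\wz\psi(tL_i)f+\chi_{G^c}\wz\psi(tL_i)f$, controlling the $\chi_G$-piece by the $L^2$-boundedness of $\sqrt{t}\,\nabla^k e^{-tL_i/2}$ together with the polynomial decay of $\wz\psi(tL_i)$ (via $\dist(E,G)\ge\dist(E,F)/2$), and the $\chi_{G^c}$-piece by the exponential decay of $\sqrt{t}\,\nabla^k e^{-tL_i/2}$ together with the $L^2$-boundedness of $\wz\psi(tL_i)$.

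For \eqref{6.1}, I use the integral representation $L_i^{-1/2}(I-e^{-tL_i})^M=\dint_{[0,t]^M}L_i^{M-1/2}e^{-|\sigma|L_i}\,d\sigma$, where $|\sigma|:=\sigma_1+\cdots+\sigma_M$, express each integrand by the analogous product factorization $\nabla^k L_i^{M-1/2}e^{-|\sigma|L_i}=C|\sigma|^{-M}\lf[\sqrt{|\sigma|}\,\nabla^k e^{-|\sigma|L_i/2}\r]\lf[(|\sigma|L_i)^{M-1/2}e^{-|\sigma|L_i/2}\r]$ after rescaling, apply the same splitting pointwise in $\sigma$, and integrate the resulting pointwise bound over the $M$-dimensional simplex-shaped domain $[0,t]^M$.

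The main obstacle is to recover the full exponent $M$ in the target bound rather than the $M-1/2$ that emerges directly from $\wz\psi\in\Psi_{M-1/2,\,\fz}$ (the missing half-order being the one absorbed by the fractional factor $L_i^{-1/2}$). The resolution rests on two observations: first, the exponential decay from Proposition \ref{p3.2} strictly dominates every polynomial $(t/[\dist(E,F)]^{2k})^N$, so that the $\chi_{G^c}$-contribution is negligible relative to the target; and second, for \eqref{6.1} the simplex integration over $[0,t]^M$ contributes an additional $(t/[\dist(E,F)]^{2k})^{1/2}$-factor through the integrability of $|\sigma|^{-1/2}$ against the simplex measure on $[0,t]^M$, thereby restoring the full $M$-th power. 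The analogous half-order restoration for \eqref{6.2} will come from combining the $\sqrt{t}$ prefactor of the functional-calculus representation with a finer use of the product decomposition of $\psi$.
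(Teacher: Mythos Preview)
Your factorization is natural, but the half-order deficit you flag is real and your proposed fixes do not close it. For \eqref{6.2}, the $\chi_G$-piece is controlled by $\lf\|\wz\psi(tL_i)f\r\|_{L^2(\oz G)}$, and since $\wz\psi\in\Psi_{M-1/2,\tau}(S_\mu^0)$, Lemma~\ref{l3.3} gives only the rate $\lf(t/[\dist(E,F)]^{2k}\r)^{M-1/2}$; the prefactor $\sqrt t$ has already been absorbed into the normalization of Proposition~\ref{p3.2} and is not available to boost the exponent. No redistribution of the exponential factors changes the vanishing order $M-\tfrac12$ of $\psi(z)=z^{M-1/2}e^{-Mz}$ at the origin, and that order is precisely what caps the polynomial rate in Lemma~\ref{l3.3}. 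For \eqref{6.1} the same thing happens: carrying out your integral,
\[
[\dist(E,F)]^{-2k(M-1/2)}\int_{[0,t]^M}|\sigma|^{-1/2}\,d\sigma
\;\sim\;[\dist(E,F)]^{-2k(M-1/2)}\,t^{M-1/2}
\;=\;\lf(\frac{t}{[\dist(E,F)]^{2k}}\r)^{M-1/2},
\]
so the simplex integration sets the correct $t$-scaling but contributes no extra $\dist(E,F)$-decay. Your argument would prove the lemma with exponent $M-\tfrac12$ in place of $M$; that would in fact suffice for the application in Theorem~\ref{t6.1} (where $M$ is a free integer parameter), but it does not prove Lemma~\ref{l6.1} as stated.

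The paper avoids forming the fractional power $L_i^{M-1/2}$ altogether. It writes $L_i^{-1/2}=c\int_0^\fz e^{-sL_i}s^{-1/2}\,ds$, so that both target operators become $s$-integrals of $\sqrt{s}\,\nabla^k e^{-csL_i}$ composed with \emph{integer}-power families $(sL_i)^m e^{-c'sL_i}$, for which Lemma~\ref{l3.1} furnishes exponential (hence arbitrary-order polynomial) $k$-Davies-Gaffney bounds. The $s$-integral is split at $s=t$: on $(0,t)$ one expands $(I-e^{-tL_i})^M$ binomially and applies Proposition~\ref{p3.2} with Lemma~\ref{l3.2}; on $(t,\fz)$ one rewrites each factor $I-e^{-tL_i}$ (resp.\ $tL_ie^{-tL_i}$) as $\tfrac{t}{s}$ times a family satisfying $k$-Davies-Gaffney estimates in $s$, so that the $M$-fold product carries a genuine $(t/s)^M$ which, integrated against $\exp\{-c(\dist(E,F)^{2k}/s)^{1/(2k-1)}\}\,ds/s$, yields the full $(t/[\dist(E,F)]^{2k})^M$.
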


\begin{proof}
We prove this lemma by borrowing some ideas from \cite{hm}. Let
$i\in\{1,\,2\}$. From \cite[Theorem 1.1]{ahmt} and \cite[Theorem
8.1]{ou}, we deduce that $\nabla^k{{L_i}^{-1/2}}$ is bounded on
$L^2(\rn)$. Thus, it suffices to prove Lemma \ref{l6.1} in the
case that $t<\lf[\dist(E,\,F)\r]^{2k}$. By the $H_\fz$ functional
calculus in $L^2(\rn)$, we know that for all $f\in L^2(\rn)$,
\begin{eqnarray}\label{6.3}
{L_i}^{-1/2}f=\frac{1}{2\sqrt{\pi}}\dint_0^\fz
e^{-s{L_i}}s^{-1/2}f\,ds,
\end{eqnarray}
which, together with the change of variables, yields that
\begin{eqnarray*}
\nabla^{k}{L_i}^{-1/2}\lf(I-e^{-t{L_i}}\r)^{M}f
&&=\frac{1} {2\sqrt{\pi}}\dint_0^\fz
\nabla^{k}e^{-s{L_i}}\lf(I-e^{-t{L_i}}\r)^{M}f\, \frac{ds}{\sqrt{s}}\\
&&=\frac{\sqrt{M+2}}{2\sqrt{\pi}}\dint_0^\fz
\nabla^{k}e^{-(M+2)s{L_i}}\lf(I-e^{-t{L_i}}\r)^{M}f
\,\frac{ds}{\sqrt{s}}\\
&&=\frac{\sqrt{M+2}}{2\sqrt{\pi}}\dint_0^t\sqrt{s}\nabla^{k}
e^{-(M+2)s{L_i}} \lf[\dsum_{j=0}^M \binom{M}{j}(-1)^{j}
e^{-jt{L_i}}\r]f\,\frac{ds}{s}\\
&&\hs+\frac{\sqrt{M+2}}{2\sqrt{\pi}}\dint_t^\fz\sqrt{s}\nabla^{k}
e^{-(M+2)s{L_i}}\lf(I-e^{-tL_i}\r)^Mf\,\frac {ds}{s}\\
&&=:\mathrm{I}+\mathrm{O},
\end{eqnarray*}
where $\binom{M}{j}$ denotes the \emph{binomial coefficient}.

To estimate $\mathrm{I}$, we write
\begin{eqnarray*}
\mathrm{I}&&=\frac{\sqrt{M+2}}{2\sqrt{\pi}}\dint_0^t\sqrt{s}\nabla^{k}
e^{-s{L_i}}e^{-(M+1)s{L_i}}f\,\frac{ds}{s}+\dsum_{j=1}^M
\frac{\sqrt{M+2}}{2\sqrt{\pi}}
\binom{M}{j}(-1)^{j}\dint_0^t\nabla^{k}e^{-jt{L_i}}
e^{-(M+2)s{L_i}}f\, \frac{ds}{\sqrt{s}}\\
&&=: \mathrm{I}_0
+\dsum_{j=1}^M \mathrm{I}_j.
\end{eqnarray*}
For $\mathrm{I}_0$, it follows, from Minkowski's inequality,
Propositions \ref{p3.1} and \ref{p3.2}, Lemma \ref{l3.2} and the
assumption $t< \lf[\dist(E,\,F)\r]^{2k}$, that
\begin{eqnarray*}
\|\mathrm{I_0}\|_{L^2(F)}&&\ls
\dint_0^t\lf\|\sqrt{s}\nabla^ke^{-s{L_i}}e^
{-(M+1)s{L_i}}f\r\|_{L^2(F)}\,\frac{ds}s\\
&&\ls\dint_0^t\exp\lf\{-\frac{\wz C\lf[\dist(E,\,F)\r]^{2k/(2k-1)}}
{s^{1/(2k-1)}}\r\}\|f\|_{L^2(E)}\,\frac{ds}s\\
&&\ls
\exp\lf\{-\frac{\wz C_1\lf[\dist(E,\,F)\r]^{2k/(2k-1)}}{t^{1/(2k-1)}}\r\}
\dint_0^t\exp\lf\{-\frac{\wz
C_2\lf[\dist(E,\,F)\r]^{2k/(2k-1)}}
{s^{1/(2k-1)}}\r\}\,\frac{ds}s\|f\|_{L^2(E)}\\
&&\ls \exp\lf\{-\frac{\wz
C_1\lf[\dist(E,\,F)\r]^{2k/(2k-1)}}{t^{1/(2k-1)}}\r\}
\frac{t}{[\dist(E,\,F)]^{2k}}
\|f\|_{L^2(E)}\\
&&\ls\lf(\frac{t}{\lf[\dist(E,\,F)\r]^{2k}}\r)^M\|f\|_{L^2(E)},
\end{eqnarray*}
where $\wz C$, $\wz C_1$, $\wz C_2$ are positive constants such that
$\wz C_1+\wz C_2=\wz C$.

For each $\mathrm{I}_j$, $j\ge1$, by Lemma \ref{l3.2} and
Propositions \ref{p3.1} and \ref{p3.2}, we see that
\begin{eqnarray*}
\lf\|\mathrm{I}_j\r\|_{L^2(F)}&&\ls\frac{1}{\sqrt{t}}
\dint_0^t\lf\|\lf(\sqrt{jt}\nabla^{k}e^{-jt{L_i}}\r)
\circ\lf(e^{-(M+2)s{L_i}}\r)f\r\|_{L^2(F)}\frac{ds}{\sqrt{s}}\\
&&\ls\frac{1}{\sqrt{t}}\|f\|_{L^2(E)} \dint_0^t\exp\lf\{-\frac{\wz
C\lf[\dist(E,\,F)\r]^{2k/(2k-1)}}
{t^{1/(2k-1)}}\r\}\frac{ds}{\sqrt{s}} \\
&&\ls\lf(\frac{t}{\lf[\dist(E,\,F)\r]^{2k}}\r)^M\|f\|_{L^2(E)},
\end{eqnarray*}
which, together with the estimate of $\mathrm{I}_0$, implies that
\begin{eqnarray}\label{6.4}
\|\mathrm{I}\|_{L^2(F)}\ls\lf(\frac{t}{
\lf[\dist(E,\,F)\r]^{2k}}\r)^M\|f\|_{L^2(E)},
\end{eqnarray}
here and in what follows, $\wz C$ always denotes a positive
constant. We now estimate $\mathrm{O}$ by writing
\begin{eqnarray*}
\mathrm{O}&&\sim\dint_t^\fz \sqrt{s}\nabla^k e^{-s{L_i}}e^{-Ms{L_i}}
\lf(I-e^{-t{L_i}}\r)^Me^{-s{L_i}}f\,\frac{ds}s\\
&&\sim \dint_t^\fz\lf(\sqrt{s}\nabla^ke^{-s{L_i}}\r)\circ\lf(
e^{-s{L_i}}-e^{-(s+t){L_i}}\r)^M\circ\lf(e^{-s{L_i}}\r)f\,\frac{ds}s.
\end{eqnarray*}
Using the analytic property of semigroups and Lemma \ref{l3.1}, we
conclude that for all $g\in L^2(\rn)$ supported in the closed set
$E$ and $t<s$,
\begin{eqnarray*}
\lf\|\lf[e^{-s{L_i}}-e^{-(s+t){L_i}}\r]g\r\|_{L^2(F)}
&&=\lf\|-\dint_0^t
\frac{\pat}{\pat r}\lf(e^{-(s+r){L_i}}\r)g\,dr\r\|_{L^2(F)}\\
&&\ls\dint_0^t\lf\|(s+r){L_i}e^{-(s+r){L_i}}g\r\|_{L^2(F)}\frac{dr}
{s+r}\\
&&\ls\dint_0^t\exp\lf\{-\frac{\wz C\lf[\dist(E,\,F)\r]^{2k/(2k-1)}}
{s^{1/(2k-1)}}\r\}\frac{dr}{s+r}\|g\|_{L^2(E)}\\
&&\ls\frac{t}{s}\exp\lf\{-\frac{\wz
C\lf[\dist(E,\,F)\r]^{2k/(2k-1)}} {s^{1/(2k-1)}}\r\}\|g\|_{L^2(E)}.
\end{eqnarray*}
Thus,
\begin{eqnarray}\label{6.5}
\lf\|\frac{s}{t}\lf[e^{-s{L_i}}-e^{-(s+t){L_i}}\r]g\r\|_{L^2(F)} \ls
\exp\lf\{-\frac{\wz C\lf[\dist(E,\,F)\r]^{2k/(2k-1)}}
{s^{1/(2k-1)}}\r\}\|g\|_{L^2(E)}.
\end{eqnarray}
Therefore, from Minkowski's inequality, \eqref{6.5}, Lemma
\ref{l3.2}, Propositions \ref{p3.1} and \ref{p3.2}, and the change
of variables, we deduce that
\begin{eqnarray*}
\|\mathrm{O}\|_{L^2(F)} &&\ls\dint_t^\fz \lf\|\lf(\sqrt{s}
\nabla^ke^{-s{L_i}}\r)\circ\lf(\frac{s}{t}
\lf[e^{-s{L_i}}-e^{-(s+t){L_i}}\r]\r)^M\circ\lf(e^{-s{L_i}}\r)f\r\|_{L^2(F)}
\lf(\frac{t}{s}\r)^{M}\,\frac{ds}{s}\\
&&\ls\|f\|_{L^2(E)}\dint_t^\fz\lf(\frac{t}{s}\r)^{M}
\exp\lf\{-\frac{\wz C\lf[\dist(E,\,F)\r]^{2k/(2k-1)}}
{s^{1/(2k-1)}}\r\}\,\frac{ds}{s}\\
&&\ls\lf(\frac{\lf[\dist(E,\,F)\r]^{2k}}{t}\r)^{-M}\|f\|_{L^2(E)}.
\end{eqnarray*}
Combining this estimate with \eqref{6.4}, we have
\begin{eqnarray*}
\lf\|\nabla^k{L_i}^{-1/2}\lf(I-e^{-t{L_i}}\r)^Mf\r\|_{L^2(F)}
\ls\lf(\frac{\lf[\dist(E,\,F)\r]^{2k}}{t}\r)^{-M}\|f\|_{L^2(E)},
\end{eqnarray*}
that is, \eqref{6.1} holds.

Now, we prove \eqref{6.2}.  Using \eqref{6.3} and the change of
variables, we see that
\begin{eqnarray*}
\nabla^k{L_i}^{-1/2}\lf(t{L_i}e^{-t{L_i}}\r)^Mf&&=\frac{1}{2\sqrt{\pi}}
\dint_0^\fz \nabla^k e^{-s{L_i}}\lf(t{L_i}e^{-t{L_i}}\r)^Mf
\frac{ds}{\sqrt{s}}\\
&&\sim\dint_0^\fz\nabla^k e^{-(M+1)s{L_i}}\lf(t{L_i}e^{-t{L_i}}
\r)^Mf\frac{ds}{\sqrt{s}}\\
&&\sim\dint_0^t\nabla^k e^{-(M+1)s{L_i}}\lf(t{L_i}e^{-t{L_i}}
\r)^Mf\frac{ds} {\sqrt{s}}+\dint_t^\fz\cdots\\
&&=:\mathrm{B}+\mathrm{D}.
\end{eqnarray*}

By an application of the analytic property of semigroups,
Propositions \ref{p3.1} and \ref{p3.2}, and  Lemmas \ref{l3.1} and
\ref{l3.2}, we conclude that
\begin{eqnarray*}
\|\mathrm{B}\|_{L^2(F)}&&\ls\frac{1}{\sqrt{t}}\dint_0^t
\lf\|\lf(\sqrt{\frac t2}\nabla^k
e^{-\frac{t}{2}{L_i}}\r)\circ\lf(e^{-(M+1)s{L_i}}\r)\circ
\lf(\frac{t}{2}L_ie^{-\frac{t}{2}L_i}\r)\circ\lf(tL_i
e^{-tL_i}\r)^{M-1}f\r\|_{L^2(F)}\frac{ds}{\sqrt{s}}\\
&&\ls\frac{1}{\sqrt{t}}\dint_0^t \exp\lf\{-\frac{\wz
C\lf[\dist(E,\,F)\r]^{2k/(2k-1)}}{t^{1/(2k-1)}}\r\}
\|f\|_{L^2(E)}\frac{ds}{\sqrt{s}}\\
&&\ls\exp\lf\{-\frac{\wz
C\lf[\dist(E,\,F)\r]^{2k/(2k-1)}}{t^{1/(2k-1)}}\r\}
\|f\|_{L^2(E)}\ls\lf(\frac{t}{\lf[\dist(E,\,F)\r]^{2k}}\r)^{M}
\|f\|_{L^2(E)},
\end{eqnarray*}
where $C$ is a positive constant.

For the estimate of $\mathrm{D}$, similar to the estimate for
$\mathrm{B}$, we write
\begin{eqnarray*}
\mathrm{D}=\dint_t^\fz\lf(\sqrt{s}\nabla^ke^{-s{L_i}}\r)\circ
\lf(\frac{t}{s}\r)^M\circ\lf[sL_ie^{-(s+t){L_i}}\r]^Mf\,\frac{ds}s
\end{eqnarray*}
and we estimate $sL_ie^{-(s+t){L_i}}f$ by
\begin{eqnarray*}
\lf\|s{L_i}e^{-(s+t){L_i}}f\r\|_{L^2(F)} &&=\lf\|\frac{s}{t}
e^{-s{L_i}}\dint_0^t\frac{\pat}{\pat
r}\lf(r{L_i}e^{-r{L_i}}\r)f\,dr\r\|_{L^2(F)}\\
&&\ls\lf\|\frac{s}{t}
e^{-s{L_i}}\dint_0^t\lf[L_ie^{-r{L_i}}f-r{L_i}^2e^{-r{L_i}}
f\r]\,dr\r\|_{L^2(F)}\\
&&\ls\lf\|\frac{s}{t}
e^{-s{L_i}}\dint_0^tL_ie^{-r{L_i}}f\,dr\r\|_{L^2(F)}+\lf\|\frac{s}{t}
e^{-s{L_i}}\dint_0^tr{L_i}^2e^{-r{L_i}}f\,dr\r\|_{L^2(F)}\\
&&=:\mathrm{V}_1+\mathrm{V}_2.
\end{eqnarray*}
By Minkowski's inequality, Lemma \ref{l3.1} and $r<t<s$, we
conclude that
\begin{eqnarray*}
\mathrm{V}_1&&\ls\frac{s}{t}\dint_0^t\lf\|{L_i}
e^{-(s+r){L_i}}(f)\r\|_{L^2(F)}
\,dr\ls\frac{s}{t}\dint_0^t\exp\lf\{-\frac{\wz
C\lf[\dist(E,\,F)\r]^{2k/(2k-1)}}
{(s+r)^{1/(2k-1)}}\r\}\|f\|_{L^2(E)}\frac{dr}{s}\\
&&\ls \exp\lf\{-\frac{\wz C\lf[\dist(E,\,F)\r]^{2k/(2k-1)}}
{s^{1/(2k-1)}}\r\}\|f\|_{L^2(E)}.
\end{eqnarray*}
Similarly, we see that
\begin{eqnarray*}
\mathrm{V}_2
&&\ls\frac{s}{t}\dint_0^t\lf\|\lf[(r+s){L_i}\r]^2e^{-(r+s){L_i}}
f\r\|_{L^2(F)}\frac{dr}{r+s}\\
&&\ls\frac{s}{t}\|f\|_{L^2(E)} \dint_0^t\exp\lf\{-\frac{\wz
C\lf[\dist(E,\,F)\r]^{2k/(2k-1)}}
{(r+s)^{1/(2k-1)}}\r\}\frac{dr}{r+s}\\
&&\ls \exp\lf\{-\frac{\wz C\lf[\dist(E,\,F)\r]^{2k/(2k-1)}}
{s^{1/(2k-1)}}\r\}\|f\|_{L^2(E)},
\end{eqnarray*}
which, together with the estimate of $\mathrm{V}_1$, shows that
the family $\{sL_ie^{-(s+t){L_i}}\}_{t>0}$ of operators satisfies
the $k$-Davies-Gaffney estimate in $s$. Thus, from Minkowski's
inequality, Lemmas \ref{l3.1} and \ref{l3.2}, Propositions
\ref{p3.1} and \ref{p3.2}, and the change of variables, we deduce
that
\begin{eqnarray*}
\|\mathrm{D}\|_{L^2(F)}&&\ls\dint_t^\fz \lf\|\lf(\sqrt{s}
\nabla^ke^{-s{L_i}}\r)\circ\lf(\frac{t}{s}\r)^M\circ
\lf(s{L_i}e^{-(s+t){L_i}}
\r)^Mf\r\|_{L^2(F)}\,\frac{ds}s\\
&&\ls\|f\|_{L^2(E)}\dint_t^\fz\lf(\frac{t}{s}\r)^M\exp\lf\{-\frac{\wz
C\lf[\dist(E,\,F)\r]^{2k/(2k-1)}}
{s^{1/(2k-1)}}\r\}\,\frac{ds}s\\
&&\ls\lf(\frac{t}{\lf[\dist(E,\,F)\r]^{2k}}\r)^{M}\|f\|_{L^2(E)}.
\end{eqnarray*}
Combining the estimates for $\mathrm{B}$ and $\mathrm{D}$, we see
that
\begin{eqnarray*}
\lf\|\nabla^k{L_i}^{-1/2}\lf(t{L_i}e^{-t{L_i}}\r)^Mf\r\|_{L^2(F)}\ls
\lf(\frac{t}{\lf[\dist(E,\,F)\r]^{2k}}\r)^{M}\|f\|_{L^2(E)},
\end{eqnarray*}
which shows that \eqref{6.2} also holds. This finishes the proof of
Lemma \ref{l6.1}.
\end{proof}

With the help of Lemma \ref{l6.1}, we show that the Riesz transform
$\nabla^k (L_i^{-1/2})$ is bounded from $H_L^p(\rn)$ to the
classical Hardy space $H^p(\rn)$, which when $p=1$, $i=2$ and $k=1$
was first obtained in \cite{hlmmy}.

\begin{theorem}\label{t6.1}
Let $k\in\nn$, $p\in(n/(n+k),\,1]$, $L_1$ be the $2k$-order
divergence form homogeneous elliptic operator with complex bounded
measurable coefficients and $L_2$ the $2k$-order Schr\"odinger type
operator. Then, for all $i\in\{1,\,2\}$, the Riesz transform
$\nabla^k (L_i^{-1/2})$ is bounded from $H_{L_i}^p(\rn)$ to the
classical Hardy space $H^p(\rn)$.
\end{theorem}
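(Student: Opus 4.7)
The plan is to exploit the molecular characterization of $H_{L_i}^p(\rn)$ established in Theorem \ref{t4.1} and reduce the assertion to the following ``one-molecule'' claim: for any fixed $i\in\{1,2\}$ and every $(H_{L_i}^p,\,\ez,\,M)$-molecule $m$ associated with a cube $Q$ (with $M>n(1/p-1/2)/(2k)$), the function $\nabla^k(L_i^{-1/2})m$ is, up to a harmless multiplicative constant, a classical $H^p(\rn)$-molecule in the Coifman-Taibleson-Weiss sense; see \cite{tw}. Since the molecular decomposition in $H_{L_i}^p(\rn)$ satisfies $\{\sum_j|\lz_j|^p\}^{1/p}\sim\|f\|_{H_{L_i}^p(\rn)}$, and any series of classical $H^p(\rn)$-molecules with $p$-summable coefficients converges in $H^p(\rn)$, the theorem follows immediately.

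A classical $H^p(\rn)$-molecule associated with $Q$ must satisfy (a) the $L^2$ annular decay $\|b\|_{L^2(S_j(Q))}\ls(2^jl(Q))^{n(1/2-1/p)}2^{-j\dz}$ for some $\dz>0$, and (b) the vanishing moment conditions $\int_\rn x^\az b(x)\,dx=0$ for all $|\az|\le\lfloor n(1/p-1)\rfloor$. Condition (b) follows directly from the $k$-th order derivative structure: the hypothesis $p>n/(n+k)$ forces $n(1/p-1)<k$, so admissible $|\az|$ are strictly less than $k$, and integration by parts against $x^\az$ annihilates $\nabla^k=\pat^\gz$. The required vanishing of boundary terms will be a by-product of the decay of $L_i^{-1/2}m$ at infinity obtained from the analysis of (a).

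For (a), let $b:= L_i^{-M}m$ and $t:=[l(Q)]^{2k}$, and decompose
\[
\nabla^k L_i^{-1/2}m=\nabla^k L_i^{-1/2}(I-e^{-tL_i})^Mm+\nabla^k L_i^{-1/2}\bigl[I-(I-e^{-tL_i})^M\bigr]m.
\]
Splitting $m=\sum_{\nu\in\zz_+} m_\nu$ with $m_\nu:=\chi_{S_\nu(Q)}m$, the first piece is controlled by \eqref{6.1} of Lemma \ref{l6.1} applied to each $m_\nu$, yielding an $L^2(S_j(Q))$-bound of order $([l(Q)]/\dist(S_\nu(Q),S_j(Q)))^{2kM}\|m_\nu\|_{L^2(S_\nu(Q))}$ when $|\nu-j|\ge 2$. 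For the second piece, expand via the binomial identity $I-(I-e^{-tL_i})^M=\sum_{\ell=1}^M(-1)^{\ell+1}\binom{M}{\ell}e^{-\ell tL_i}$, and use the algebraic identity $(sL_i)^Me^{-sL_i}=M^M(\zeta L_ie^{-\zeta L_i})^M$ with $\zeta=s/M$ to rewrite $e^{-\ell tL_i}L_i^Mb=(\ell t)^{-M}M^M(\zeta L_ie^{-\zeta L_i})^Mb$ (with $s=\ell t$); splitting $b=\sum_\nu b_\nu$ with $b_\nu:=\chi_{S_\nu(Q)}b$ and invoking \eqref{6.2} of Lemma \ref{l6.1} then gives an $L^2(S_j(Q))$-bound of order $[\dist(S_\nu(Q),S_j(Q))]^{-2kM}\|b_\nu\|_{L^2(S_\nu(Q))}$. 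Combined with the molecule size estimates $\|m_\nu\|_{L^2}\ls(2^\nu l(Q))^{n(1/2-1/p)}2^{-\nu\ez}$ and $\|b_\nu\|_{L^2}\ls[l(Q)]^{2kM}(2^\nu l(Q))^{n(1/2-1/p)}2^{-\nu\ez}$ from \eqref{4.2}, the assumption $2kM>n(1/p-1/2)$ forces summability over $\nu$ and delivers $(2^jl(Q))^{n(1/2-1/p)}2^{-j\dz}$ for large $j$. For bounded $j$, the global $L^2(\rn)$-boundedness of $\nabla^k L_i^{-1/2}$ combined with $\|m\|_{L^2(\rn)}\ls[l(Q)]^{n(1/2-1/p)}$ (from Minkowski's inequality and \eqref{4.2}) yields the bound directly.

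The main obstacle will be the treatment of the ``diagonal'' indices $\nu\in\{j-1,\,j,\,j+1\}$, where Lemma \ref{l6.1} offers no spatial decay. Here we must fall back on the $L^2(\rn)$-boundedness of $\nabla^k L_i^{-1/2}$ (which for $L_1$ is \cite[Theorem 1.1]{ahmt} and for $L_2$ follows from its nonnegative self-adjointness) together with the uniform $L^2$-boundedness of $(I-e^{-tL_i})^M$ and $(\zeta L_ie^{-\zeta L_i})^M$. The scale choice $t=[l(Q)]^{2k}$ becomes crucial at this point: the prefactor $(\ell t)^{-M}\sim[l(Q)]^{-2kM}$ exactly cancels the prefactor $[l(Q)]^{2kM}$ appearing in $\|b_\nu\|_{L^2}$, so that the diagonal contribution is also of the required order $(2^jl(Q))^{n(1/2-1/p)}2^{-j\ez}$. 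Assembling all cases yields the classical $H^p(\rn)$-molecule property of $\nabla^k(L_i^{-1/2})m$ and hence the theorem.
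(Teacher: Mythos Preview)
Your approach to the size condition (a) is essentially the paper's: the same splitting
\[
\nabla^k L_i^{-1/2}m=\nabla^k L_i^{-1/2}(I-e^{-tL_i})^Mm+\nabla^k L_i^{-1/2}\bigl[I-(I-e^{-tL_i})^M\bigr]m,
\]
the same use of \eqref{6.1} and \eqref{6.2}, and the same fallback to $L^2$-boundedness of $\nabla^k L_i^{-1/2}$ on diagonal pieces. Your full dyadic decomposition of $m$ and $b$ is a harmless variant of the paper's three-piece split $\chi_{2^{j-2}Q}$, $\chi_{\rn\setminus 2^{j+1}Q}$, $\chi_{2^{j+1}Q\setminus 2^{j-2}Q}$.

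The genuine gap is in your treatment of the vanishing moments (b). You write that the boundary terms in the integration by parts will vanish thanks to ``the decay of $L_i^{-1/2}m$ at infinity obtained from the analysis of (a)''. But (a) yields annular $L^2$ decay only for $\nabla^k L_i^{-1/2}m$; it says nothing about $L_i^{-1/2}m$ itself or its derivatives of order $<k$, which are the objects appearing in the boundary (or cutoff-error) terms. From $L_i^{-1/2}m\in W^{k,2}(\rn)$ you only get $L^2$ membership of these lower-order derivatives, and a standard cutoff computation shows that the error term is controlled by $R^{|\az|-k+n/2}\|L_i^{-1/2}m\|_{L^2(R\le|x|\le 2R)}$, which need not tend to $0$ when $|\az|$ is close to $k-1$ and $n\ge 3$. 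So the formal integration by parts cannot be justified in the way you indicate.

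The paper closes this gap by a localization argument: one introduces a partition of unity $\{\vz_j\}\subset C_c^\infty(\rn)$ and auxiliary cutoffs $\{\eta_j\}$ with $\eta_j\equiv 1$ on $\supp\vz_j$, writes $\nabla^k L_i^{-1/2}m=\nabla^{k-1}\bigl(\sum_j\vz_j\nabla L_i^{-1/2}m\bigr)$, replaces $x^\az$ by $\eta_j x^\az$ on each compactly supported piece, and then integrates by parts against $\eta_j x^\az\in C_c^\infty(\rn)$, where no boundary issue arises. This reduces the moment integral to a sum of integrals of the form $\int(\nabla\eta_j)\,\vz_j\,L_i^{-1/2}m$, which vanish because $\nabla\eta_j$ is supported where $\vz_j=0$. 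You should replace your one-line dismissal of (b) with such a localization step; the rest of your argument then goes through as in the paper.
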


\begin{proof}  Let $i\in\{1,\,2\}$.
We first claim that to prove Theorem \ref{t6.1}, it suffices to show
that $\nabla^k (L_i^{-1/2})$ maps each
$(H_{L_i}^p,\,\ez,\,M)$-molecule $m$ as in Definition \ref{d4.2}
with $\ez>0$ and $M>n(1/p-1/2)/(2k)$ into a classical
$H^p(\rn)$-molecule in \cite{tw} up to a harmless constant multiple.

Indeed, assume this claim for the moment. For any $f\in
\mathbb{H}_{L_i}^p(\rn)$, by Theorem \ref{t4.1}, there exist
$\{\lz_j\}_{j=0}^\fz\in l^p$ and a sequence $\{m_j\}_{j=0}^\fz$ of
$(H_{L_i}^p,\,\ez,\,M)$-molecules such that
$f=\sum_{j=0}^\fz\lz_jm_j$ is a molecular
$(H_L^p,\,2,\,\ez,\,M)$-representation of $f$ and
$$\|f\|_{H_{L_i}^p(\rn)}\sim\lf(\sum_{j=0}^\fz|\lz_j|^p\r)^{1/p}.$$
Moreover, from the $L^2(\rn)$-boundedness of $\nabla^k(L_i^{-1/2})$
and the fact that $f=\sum_{j=0}^\fz\lz_jm_j$ holds in $L^2(\rn)$, it
follows that
\begin{eqnarray}\label{6.6}
\nabla^k(L_i^{-1/2})f=\nabla^k(L_i^{-1/2})\lf(\dsum_{j=0}^\fz
\lz_jm_j\r)=\dsum_{j=0}^\fz\lz_j\nabla^k(L_i^{-1/2})m_j
\end{eqnarray}
in $L^2(\rn)$ and hence in the {\it space $\mathcal{S}'(\rn)$} of
Schwartz distributions, which, together with the above claim,
implies that \eqref{6.6} is a classical molecular decomposition of
$\nabla^k(L_i^{-1/2})f$ in $H^p(\rn)$. Thus, by the molecular
characterization of $H^p(\rn)$ in \cite{tw}, we further conclude
that
\begin{eqnarray*}
\lf\|\nabla^k(L_i^{-1/2})f\r\|_{H^p(\rn)}\ls
\lf(\dsum_{j=0}^\fz|\lz_j|^p\r)^{1/p}\sim\|f\|_{H_{L_i}^p(\rn)},
\end{eqnarray*}
which, combined with a density argument, then shows that
$\nabla^k(L_i^{-1/2})$ is bounded from $H_{L_i}^p(\rn)$ to
$H^p(\rn)$.

Let $m$ be an $(H_{L_i}^p,\,\ez,\,M)$-molecule associated with the
cube $Q$ as in Definition \ref{d4.2} with $\ez\in(0,\fz)$ and
$M>n(1/p-1/2)/(2k)$. To prove the above claim, we need to prove
that $\nabla^k(L_i^{-1/2})m$ is a classical $H^p(\rn)$-molecule in
\cite{tw} up to a harmless constant multiple. To this end, we only
need to show that $\nabla^k(L_i^{-1/2})m$ is a following defined
$H^p(\rn)$-molecule in \cite{hyz,hmm}, from which it follows that
it is also a classical molecule in \cite{tw}. In what follows, for
any $\gz\in\rr$, we denote by $\lfloor\gz\rfloor$ the {\it maximal
integer not more than} $\gz$. Let $p\in(0,\,1]$ and $Q$ be a cube
in $\rn$. A function $\wz m\in L^2(\rn)$ is called an {\it
$H^p(\rn)$-molecule associated with $Q$} if there exists a
positive constant $\ez\in(0,\,\fz)$ such that
\begin{enumerate}
\item[(i)] for all $j\in\zz_+$,
\begin{eqnarray}\label{6.7}
\|\wz m\|_{L^2(S_j(Q))}\ls\lf[2^jl(Q)\r]^{n(1/p-1/2)}2^{-j\ez};
\end{eqnarray}
\item[(ii)] there exists a non-negative integer
$M\in\zz_+$ with $M\ge\lfloor n(1/p-1)\rfloor$ such that for all
multi-indices $\az$ with $0\le|\az|\le M$,
\begin{eqnarray}\label{6.8}
\dint_{\rn} x^{\az} \wz  m(x)\,dx=0.
\end{eqnarray}
\end{enumerate}

We first prove that $\nabla^k (L_i^{-1/2})m$ satisfies
\eqref{6.7}. For all  $j\in\{0,\,1,\,2\}$, by the
$L^2(\rn)$-boundedness of $\nabla^k ({L_i}^{-1/2})$ and
\eqref{4.5}, we see that
\begin{eqnarray*}
\hs\hs\hs\lf\|\nabla^k
({L_i}^{-1/2})m\r\|_{L^2(S_j(Q))}\ls\lf\|\nabla^k
({L_i}^{-1/2})m\r\|_{L^2(\rn)}\ls\|m\|_{L^2(\rn)}\ls|Q|^{1/2-1/p}.
\end{eqnarray*}
When $j>2$, we write
\begin{eqnarray*}
\lf\|\nabla^k ({L_i}^{-1/2})m\r\|_{L^2(S_j(Q))}&&\le\lf\|\nabla^k
({L_i}^{-1/2})\lf(I-e^{-[l(Q)]^{2k}L_i}\r)^Mm\r\|_{L^2(S_j(Q))}\\
&&\hs+\lf\|\nabla^k({L_i}^{-1/2})\lf[I-\lf(I-e^{-[l(Q)]^{2k}L_i}\r)^M\r]m
\r\|_{L^2(S_j(Q))}\\
&&=:\mathrm{I}+\mathrm{O}.
\end{eqnarray*}
From an application of Lemma \ref{l6.1} and \eqref{4.5}, it
follows that
\begin{eqnarray*}
\mathrm{I}&&\ls\lf\|\nabla^k
({L_i}^{-1/2})\lf(I-e^{-[l(Q)]^{2k}L_i}\r)^M(m\chi_{2^{j-2}Q})\r
\|_{L^2(S_j(Q))}\\
&&\hs+\lf\|\nabla^k ({L_i}^{-1/2})\lf(I-e^{-[l(Q)]^{2k}L_i}\r)^M
(m\chi_{\rn\setminus(2^{j+1}Q)})\r\|_{L^2(S_j(Q))}\\
&&\hs+\lf\|\nabla^k ({L_i}^{-1/2})\lf(I-e^{-[l(Q)]^{2k}L_i}\r)^M
(m\chi_{2^{j+1}Q\setminus(2^{j-2}Q)})\r\|_{L^2(S_j(Q))}\\
&&\ls\lf[\frac{\dist(S_j(Q),\,2^{j-2}Q)}{l(Q)}\r]^{2kM}\lf\|
m\chi_{2^{j-2}Q}\r\|_{L^2(\rn)}\\
&&\hs+\lf[\frac{\dist(S_j(Q),\,\rn\setminus(2^{j+1}Q))}{l(Q)}\r]^{2kM}
\lf\|m\chi_{\rn\setminus(2^{j+1}Q)}\r\|_{L^2(\rn)}
+\lf\| m\chi_{2^{j+1}Q\setminus(2^{j-1}Q)}\r\|_{L^2(\rn)}\\
&&\ls2^{-2jkM}\lf[l(Q)\r]^{n(1/2-1/p)}+\lf[2^jl(Q)\r]^{n(1/2-1/p)}
2^{-j\ez}.
\end{eqnarray*}
Let $\wz\ez:=\min\{\ez,\,2kM-n(1/p-1/2)\}>0$. We then have
\begin{eqnarray}\label{6.9}
\mathrm{I}\ls \lf[2^jl(Q)\r]^{n(1/2-1/p)} 2^{-j\wz\ez}.
\end{eqnarray}
To estimate $\mathrm{O}$, from Lemma \ref{l6.1} and \eqref{4.2}, we
deduce that
\begin{eqnarray*}
\mathrm{O} &&\ls\dsup_{1\le \ell\le
M}\lf\|\nabla^k{L_i}^{-1/2}e^{-\ell[l(Q)]^{2k}{L_i}}m\r\|_{L^2(S_j(Q))}\\
&&\sim\dsup_{1\le \ell\le
M}\lf\|\nabla^k{L_i}^{-1/2}\lf(\frac{\ell}{M}[l(Q)]^{2k}
{L_i}e^{-\frac{\ell}{M}[l(Q)]^{2k}{L_i}}\r)^M\lf([l(Q)]^{-2k}{L_i}^{-1}\r)^M
m\r\|_{L^2(S_j(Q))}\\
&&\sim\dsup_{1\le \ell\le
M}\lf\|\nabla^k{L_i}^{-1/2}\lf(\frac{\ell}{M}[l(Q)]^{2k}
{L_i}e^{-\frac{\ell}{M}[l(Q)]^{2k}{L_i}}\r)^M\lf[
\chi_{2^{j-2}Q}\lf([l(Q)]^{-2k}{L_i}^{-1}\r)^M\r]
m\r\|_{L^2(S_j(Q))}\\
&&\hs+\dsup_{1\le \ell\le
M}\lf\|\nabla^k{L_i}^{-1/2}\lf(\frac{\ell}{M}[l(Q)]^{2k}
{L_i}e^{-\frac{\ell}{M}[l(Q)]^{2k}{L_i}}\r)^M\lf[\chi_{\rn
\setminus2^{j+1}Q}
\lf([l(Q)]^{-2k}{L_i}^{-1}\r)^M\r] m\r\|_{L^2(S_j(Q))}\\
&&\hs+\dsup_{1\le\ell\le M}\lf\|\nabla^k{L_i}^{-1/2}\lf(
\frac{\ell}{M}[l(Q)]^{2k}
{L_i}e^{-\frac{\ell}{M}[l(Q)]^{2k}{L_i}}\r)^M
\lf[\chi_{2^{j+1}Q\setminus2^{j-2}Q}\lf([l(Q)]^{-2k}{L_i}^{-1}\r)^M\r]m
\r\|_{L^2(S_j(Q))}\\
&&\ls 2^{-2jkM}\lf\|\lf([l(Q)]^{-2k}L_i^{-1}\r)^Mm\r\|_{L^2(\rn)}
+\lf\|\chi_{2^{j+1}Q\setminus2^{j-1}Q}
\lf([l(Q)]^{-2k}L_i^{-1}\r)^Mm\r\|_{L^2(\rn)}\\
&&\ls 2^{-2jkM}\lf\{\dsum_{\wz k=0}^\fz\lf\|
\lf([l(Q)]^{-2k}L_i^{-1}\r)^M m\r\|^2_{L^2(S_{\wz k}(Q))}\r\}^{1/2}
+\lf[2^jl(Q)\r]^{n(1/2-1/p)}2^{-j\ez}\\
&&\ls 2^{-2jkM}\lf\{\dsum_{\wz k=0}^\fz 2^{-2\wz
k[\ez+n(1/p-1/2)]}\r\}^{1/2}[l(Q)]^{n(1/2-1/p)}
+\lf[2^jl(Q)\r]^{n(1/2-1/p)}2^{-j\ez}\\
&&\ls2^{-j\wz\ez}\lf[2^jl(Q)\r]^{n/p-2/p},
\end{eqnarray*}
which, together with \eqref{6.9}, implies that $\nabla^k
(L_i^{-1/2})m$ satisfies \eqref{6.7} with $\ez$ therein replaced by
$\wz\ez$.

Now, we prove that $\nabla^k (L_i^{-1/2})m$ satisfies \eqref{6.8}
by borrowing some ideas from the proof of Theorem 7.4 in
\cite{jy10}. Let $D(\sqrt{{L_i}})$ be the \emph{domain} of
$\sqrt{{L_i}}$ and $R({L_i}^{-1/2})$ the \emph{range} of
${L_i}^{-1/2}$. From \cite{ahmt,ou}, it follows that
$D(\sqrt{L_i})=D(\mathfrak{a}_i)$, where $D(\mathfrak{a}_i)\subset
 W^{k,\,2}(\rn)$ is the
domain of the sesquilinear form associated with $L_i$, which implies
that $R({L_i}^{-1/2})\subset W^{k,\,2}(\rn)$. Let
$\{\vz_j\}_{j=1}^\fz\subset C_\text{c}^\fz(\rn)$ such that
\begin{enumerate}
\item[(i)] $\sum_{j=1}^\fz\vz_j(x)=1$ for almost every $x\in\rn$;
\item[(ii)] for each $j\in\nn$, there exists a ball $B_j\subset\rn$
such that $\supp \vz_j\subset 2B_j$, $\vz_j:=1$ on $B_j$ and
$0\le\vz_j\le1$;
\item[(iii)] there exists a positive constant $C_\vz$ such that for
all $j\in\nn$ and $x\in\rn$,
$$\sum_{\ell=1}^k|\nabla^\ell\vz_j(x)|\le C_\vz;$$
\item[(iv)] there exists $N_\vz\in\nn$ such that $\sum_{j=1}^\fz
\chi_{2B_j}\le N_\vz$.
\end{enumerate}
For all $j\in\nn$ and multi-indices $\az$, let $\eta_j\in
C_\text{c}^\fz(\rn)$ such that $\eta_j:=1$ on $2B_j$ and $\supp
\eta_{j}\subset 4B_j$. Since $R({L_i}^{-1/2})\subset
W^{k,\,2}(\rn)$ and $\eta_j\,x^\az\in C_\text{c}^\fz(\rn)$, we
conclude that
\begin{eqnarray*}
\dint_{\rn}x^{\az}\nabla^{k}{L_i}^{-1/2}m(x)\,dx &&=\dint_\rn x^\az
\nabla^{k-1}\lf(\dsum_{j=1}^\fz\vz_j\nabla {L_i}^{-1/2}\r)m(x)\,dx\\
&&=\dsum_{j=1}^\fz\dint_\rn x^\az\nabla^{k-1}
\lf(\vz_j\nabla {L_i}^{-1/2}\r)m(x)\,dx\\
&&=\dsum_{j=1}^\fz\dint_\rn\eta_jx^\az\nabla^{k-1}
\lf(\nabla {L_i}^{-1/2}\r)m(x)\,dx\\
&&=\dsum_{j=1}^\fz(-1)^{k-1}\dint_{\rn}\lf(\nabla^{k-1}(\eta_jx^\az)\r)
\nabla({L_i}^{-1/2})m(x)\,dx.
\end{eqnarray*}
Thus, for all $|\az|\le k-1=n(1/[n/(n+k)]-1)$, we see that
\begin{eqnarray*}
\lf|\dint_{\rn}x^{\az}\nabla^{k}{L_i}^{-1/2}m(x)\,dx\r|
&&\le\dsum_{j=1}^\fz\lf|\dint_{\rn}\lf(\nabla^{k-1}(\eta_jx^\az)\r)
\nabla({L_i}^{-1/2}m(x)\,dx\r|\\
&&\le\dsum_{j=1}^\fz\lf|\dint_{\rn}\eta_j
\nabla({L_i}^{-1/2})m(x)\,dx\r|\\
&&=\dsum_{j=1}^\fz\lf|\dint_{\rn}\eta_j
\nabla(\vz_i{L_i}^{-1/2})m(x)\,dx\r|\\
&&=\dsum_{j=1}^\fz\lf|\dint_{\rn}\nabla(\eta_j)
\vz_i{L_i}^{-1/2}m(x)\,dx\r|=0,
\end{eqnarray*}
which implies that $\nabla^k (L_i^{-1/2})m$ satisfies \eqref{6.8}
with $p$ and $M$ respectively replaced by $n/(n+k)$ and
$n(1/[n/(n+k)]-1)$. Thus, $\nabla^k(L_i^{-1/2})m$ is a classical
$H^p(\rn)$ molecule in \cite{tw}, which completes the proof of
Theorem \ref{t6.1}.
\end{proof}

On the Hardy space $H_{L_1}^p(\rn)$, we further obtain its
characterization by the Riesz transforms $\nabla^k(L_1^{-1/2})$. To
this end, we first introduce some notions.

\begin{definition}\label{d6.1}
Let $p\in(0,\,1]$ and $L_1$ be the $2k$-order divergence form
homogenous elliptic operator with complex bounded measurable
coefficients. The {\it Riesz transform Hardy space}
$H_{L_1,\text{Riesz}}^p(\rn)$ is defined to be the completion of the
set
\begin{eqnarray*}
\mathbb{H}_{L_1,\text{Riesz}}^p(\rn):=\lf\{f\in L^2(\rn):\
\nabla^k(L_1^{-1/2})f\in H^p(\rn)\r\}
\end{eqnarray*}
with respect to the quasi-norm
$$\|f\|_{H_{L_1,\text{Riesz}}^p(\rn)}:=\lf\|\nabla^k(L_1^{-1/2})f
\r\|_{H^p(\rn)}$$ for all $f\in \mathbb{H}_{L_1,\riesz}^p(\rn)$.
\end{definition}

We also need the following notion of $L^p-L^q$ $k$-off-diagonal
estimates, which when $k=1$ previously appeared in \cite{a07} (see
also \cite{hmm}).

\begin{definition}\label{d6.2}
Let $k\in\nn$, $r,\,q\in(1,\,\fz)$ and $r\le q$. A family
$\{S_t\}_{t>0}$ of operators is said to satisfy the {\it $L^r-L^q$
$k$-off-diagonal estimate}, if there exist  positive constant $C$
and $\wz C$ such that for all closed sets $E$, $F\subset\rn$ and
$f\in L^r(\rn) \cap L^2(\rn)$ supported in $E$,
\begin{eqnarray*}
\|S_tf\|_{L^q(F)}\le Ct^{\frac{n}{2k}(\frac{1}{q}-\frac{1}{r})}
\exp\lf\{-\wz C\frac{\lf[\dist(E,\,F)\r]^{2k/(2k-1)}}
{t^{1/(2k-1)}}\r\}\|f\|_{L^r(E)}.
\end{eqnarray*}
\end{definition}

On the $L^r-L^q$ $k$-off-diagonal estimate of the $2k$-order
divergence form homogeneous elliptic operator $L_1$ with complex
bounded measurable coefficients, we have the following useful lemma.

\begin{lemma}\label{l6.2}
Let $L_1$ be the $2k$-order divergence form homogeneous elliptic
operator with complex bounded measurable coefficients and
$r\in(1,\,2]$ such that the semigroup $\{e^{-tL_1}\}_{t>0}$
satisfies the $L^r-L^2$ $k$-off-diagonal estimate. Then the family
$\{tL_1e^{-tL_1}\}_{t>0}$ of operators also satisfies the $L^r-L^2$
$k$-off-diagonal estimate.
\end{lemma}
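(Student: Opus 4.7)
\textbf{Proof plan for Lemma \ref{l6.2}.} The plan is to exploit the analytic-semigroup identity
$$tL_1e^{-tL_1}=\bigl(tL_1e^{-tL_1/2}\bigr)\circ e^{-tL_1/2},$$
so that the two factors can be handled by two complementary estimates already at our disposal. The first factor $tL_1e^{-tL_1/2}$ satisfies the ordinary $L^2$ $k$-Davies-Gaffney estimate: this is an instance of Lemma \ref{l3.1} with $m=1$ (applied along the ray $z=t/2\in S^0_{\ell(\pi/2-\omega)}$ for some $\ell\in(0,1)$), and in particular it is uniformly bounded on $L^2(\rn)$. The second factor $e^{-tL_1/2}$ satisfies, by hypothesis, the $L^r$-$L^2$ $k$-off-diagonal estimate; taking $E=F=\rn$ in Definition \ref{d6.2} also yields the global bound $\|e^{-tL_1/2}\|_{L^r(\rn)\to L^2(\rn)}\ls t^{\frac{n}{2k}(\frac{1}{2}-\frac{1}{r})}$, which I will need for one of the two pieces.

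Fix closed sets $E,\,F\subset\rn$ and set $\rho:=\dist(E,F)$. The case $\rho=0$ reduces immediately to composing this global $L^r\to L^2$ bound with the $L^2$-boundedness of $tL_1e^{-tL_1/2}$. Assume now $\rho>0$ and, following the splitting from the proof of Lemma \ref{l3.2}, let $G:=\{x\in\rn:\dist(x,F)<\rho/2\}$, so that $\dist(E,\overline G)\ge\rho/2$ and $\dist(\rn\setminus G,F)\ge\rho/2$. For any $f\in L^r(\rn)\cap L^2(\rn)$ supported in $E$ decompose
\begin{eqnarray*}
\|tL_1e^{-tL_1}f\|_{L^2(F)}&\le&\|tL_1e^{-tL_1/2}(\chi_G\,e^{-tL_1/2}f)\|_{L^2(F)}\\
&&+\|tL_1e^{-tL_1/2}(\chi_{\rn\setminus G}\,e^{-tL_1/2}f)\|_{L^2(F)}.
\end{eqnarray*}
For the first term I bound $tL_1e^{-tL_1/2}$ trivially on $L^2(\rn)$ and then apply the $L^r$-$L^2$ off-diagonal estimate of $e^{-tL_1/2}$ between $E$ and $\overline G$; this produces the desired factor $t^{\frac{n}{2k}(\frac{1}{2}-\frac{1}{r})}\exp\{-C'\rho^{2k/(2k-1)}/t^{1/(2k-1)}\}\,\|f\|_{L^r(E)}$, where $C'$ absorbs the $(1/2)^{2k/(2k-1)}$ coming from $\dist(E,\overline G)\ge\rho/2$. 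For the second term I apply the $L^2$ $k$-Davies-Gaffney estimate of $tL_1e^{-tL_1/2}$ between $\rn\setminus G$ and $F$ to pull out the same exponential factor (again with the $\rho/2$ adjustment), and then control the remaining $\|e^{-tL_1/2}f\|_{L^2(\rn)}$ by the global $L^r\to L^2$ bound recorded above. Summing the two contributions yields exactly the $L^r$-$L^2$ $k$-off-diagonal estimate asserted for $\{tL_1e^{-tL_1}\}_{t>0}$.

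The only delicate point is the bookkeeping of constants inside the exponential, so that the same decay rate $\rho^{2k/(2k-1)}/t^{1/(2k-1)}$ survives the composition with a single (possibly smaller) constant depending on those from Definition \ref{d6.2} and from Lemma \ref{l3.1}. This is precisely the $\wz C_7:=C_7\cdot 2^{2k/(2k-1)}$ renormalisation carried out in the proof of Lemma \ref{l3.2}, and it transfers to the $L^r$-$L^2$ setting without change, since only the elementary distance inequalities $\dist(E,\overline G),\,\dist(\rn\setminus G,F)\ge\rho/2$ are used in the argument. No new analytic input beyond Lemma \ref{l3.1}, Proposition \ref{p3.1} and the standing hypothesis on $e^{-tL_1}$ is required.
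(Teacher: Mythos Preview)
Your proof is correct and follows essentially the same approach as the paper: factor $tL_1e^{-tL_1}$ as an $L^2$-$L^2$ piece (handled via Lemma \ref{l3.1}) composed with the $L^r$-$L^2$ piece $e^{-tL_1/2}$ from the hypothesis, then run the splitting argument of Lemma \ref{l3.2}. The paper's proof is terser---it simply writes $tL_1e^{-tL_1}=2(\tfrac{t}{2}L_1e^{-\frac{t}{2}L_1})(e^{-\frac{t}{2}L_1})$ and invokes ``an argument similar to the proof of Lemma \ref{l3.2}''---whereas you spell out the $G$-decomposition explicitly, but the content is identical.
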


\begin{proof}
By the analytical property of $\{e^{-tL_1}\}_{t>0}$, we have
$\{tL_1e^{-tL_1}\}_{t>0}=\{2(\frac{t}{2}L_1e^{-\frac{t}{2}L_1})
(e^{-\frac{t}{2}L_1})\}_{t>0}$. Since the $k$-Davies-Gaffney
estimate is just the $L^2-L^2$ $k$-off-diagonal estimate, it
follows, from Proposition \ref{p3.1} and Lemma \ref{l3.1}, that
$\{\frac{t}{2}L_1e^{-\frac{t}{2}L_1}\}_{t>0}$ satisfies the
$L^2-L^2$ $k$-off-diagonal estimate. Moreover, by the fact that
$\{e^{-\frac{t}{2}L_1}\}_{t>0}$ satisfies the $L^r-L^2$
$k$-off-diagonal estimate and an argument similar to the proof of
Lemma \ref{l3.2} with $\{A_t\}_{t>0}$ and $\{B_s\}_{s>0}$,
respectively, replaced by
$\{\frac{t}{2}L_1e^{-\frac{t}{2}L_1}\}_{t>0}$ and
$\{e^{-\frac{t}{2}L_1}\}_{t>0}$, we conclude that
$\{tL_1e^{-tL_1}\}_{t>0}$ also satisfies the $L^r-L^2$
$k$-off-diagonal estimate, which completes the proof of Lemma
\ref{l6.2}.
\end{proof}

\begin{proposition}\label{p6.1}
Let $L_1$ be the $2k$-order divergence form homogeneous elliptic
operator with complex bounded measurable coefficients and
$r\in(1,\,2]$ such that the semigroup $\{e^{-tL_1}\}_{t>0}$
satisfies the $L^r-L^2$ $k$-off-diagonal estimate. Then for all
$p\in(0,\,1]$ such that  $p>rn/(n+kr)$ and
$h\in\mathbb{H}_{L_1,\riesz}^p(\rn)$,
\begin{eqnarray*}
\|h\|_{H_{L_1}^p(\rn)}\le C\|\nabla^k{L_1}^{-1/2}h\|_{H^p(\rn)}.
\end{eqnarray*}
\end{proposition}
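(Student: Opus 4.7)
My plan is to pass to the variable $f := L_1^{-1/2} h$, recast the inequality as a boundedness statement of a suitable $L_1$-adapted square function on the Hardy-Sobolev space $\dot H^{k,p}(\rn)$, and establish that boundedness via an atomic estimate. Since $h \in L^2(\rn)$ and, by hypothesis, $\nabla^k f = \nabla^k L_1^{-1/2} h \in H^p(\rn)$, the function $f$ lies in $\dot W^{k,2}(\rn) \cap \dot H^{k,p}(\rn)$ with $\|f\|_{\dot H^{k,p}(\rn)} \sim \|\nabla^k L_1^{-1/2} h\|_{H^p(\rn)}$ and $h = \sqrt{L_1}\,f$. By Theorem \ref{t5.1} applied to $\psi(z) := z^{1/2} e^{-z}$, which lies in $\Psi_{1/2, N}(S_\mu^0)$ for any $N > n(1/p - 1/2)/(2k)$, the defining square function $S_{L_1}(h)$ of $\|h\|_{H_{L_1}^p(\rn)}$ may be replaced, up to equivalent quasi-norms, by the square function $S_1\sqrt{L_1}(f)$ built from $\psi(t^{2k}L_1)f = t^k\sqrt{L_1}\, e^{-t^{2k}L_1} f$. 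Thus the desired inequality reduces to
$$\|S_1\sqrt{L_1}(f)\|_{L^p(\rn)} \lesssim \|f\|_{\dot H^{k,p}(\rn)}.$$

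The heart of the argument is the uniform atomic estimate (6.12): for every Hardy-Sobolev $H^{k,p}(\rn)$-atom $b$ associated with a cube $Q$ (in the atomic characterization of \cite{tr,hpw,ck,wz}), one shows $\|S_1\sqrt{L_1}(b)\|_{L^p(\rn)} \lesssim 1$. I would split the $L^p(\rn)$-integral into a local piece over a fixed dilate of $Q$ and annular pieces $S_j(Q)$ for large $j$. On the local piece, I would apply H\"older's inequality, the $L^2$-boundedness of $S_1\sqrt{L_1}$ (which follows from the quadratic estimate \eqref{4.3} combined with the comparison $\|\sqrt{L_1}\vz\|_{L^2(\rn)} \lesssim \|\nabla^k \vz\|_{L^2(\rn)}$ from \cite[Theorem 1.1]{ahmt}), and the Triebel-Lizorkin embedding (6.14) from \cite{tr} that controls $\|b\|_{L^r(\rn)}$. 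On each annulus, I would split the $t$-integral at $t \sim [l(Q)]^{2k}$ and invoke the $L^r$-$L^2$ $k$-off-diagonal estimate for $\{tL_1 e^{-tL_1}\}_{t>0}$ supplied by Lemma \ref{l6.2}; its decay $\exp\{-c(2^j l(Q))^{2k/(2k-1)}/t^{1/(2k-1)}\}$ dominates the geometric growth $|S_j(Q)|^{1/p - 1/2}$ precisely when $p > rn/(n+kr)$, which is exactly where the threshold in the hypothesis arises.

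From this atomic bound, Lemma \ref{l6.3} upgrades to the global inequality: any $f \in \dot W^{k,2}(\rn) \cap \dot H^{k,p}(\rn)$ decomposes as $\sum_j \lambda_j b_j$ with simultaneous convergence in $\dot W^{k,2}(\rn)$ and $\dot H^{k,p}(\rn)$, and with $(\sum_j |\lambda_j|^p)^{1/p} \sim \|f\|_{\dot H^{k,p}(\rn)}$. The $\dot W^{k,2}$-convergence, combined with the $L^2$-boundedness of $S_1\sqrt{L_1}$, permits a subadditivity argument analogous to the one in the proof of Lemma \ref{l5.2}, yielding $S_1\sqrt{L_1}(f) \le \sum_j |\lambda_j|\,S_1\sqrt{L_1}(b_j)$ almost everywhere; the $\ell^p$-summation then transfers the uniform atomic bound to $f$. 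Chaining the above gives
$$\|h\|_{H_{L_1}^p(\rn)} \sim \|S_1\sqrt{L_1}(f)\|_{L^p(\rn)} \lesssim \|f\|_{\dot H^{k,p}(\rn)} \sim \|\nabla^k L_1^{-1/2} h\|_{H^p(\rn)}.$$
The main obstacle is the atomic estimate (6.12): the annular off-diagonal decay must be quantitatively matched against both the atomic normalization and the Triebel-Lizorkin embedding (6.14) near the threshold $p = rn/(n+kr)$, which is precisely where the strengthened $L^r$-$L^2$ hypothesis (beyond bare Davies-Gaffney) and the AHMT comparison together become essential.
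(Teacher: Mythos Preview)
Your proposal is correct and follows essentially the same route as the paper's proof: pass to $f=L_1^{-1/2}h$, invoke Theorem \ref{t5.1} to reduce to $\|S_1\sqrt{L_1}f\|_{L^p}\lesssim\|f\|_{\dot H^{k,p}}$, establish the uniform atomic bound \eqref{6.12} via a local piece (H\"older plus the $L^2$-boundedness coming from \eqref{4.3} and \cite[Theorem 1.1]{ahmt}) and annular pieces controlled by Lemma \ref{l6.2} together with the embedding \eqref{6.14}, and then sum over atoms using Lemma \ref{l6.3} and the pointwise subadditivity argument from the proof of \eqref{5.11}. One small correction: on each annulus $S_j(Q)$ the $t$-integral should be split at $t\sim 2^j l(Q)$ (not at $[l(Q)]^{2k}$), since it is the $j$-dependent threshold that produces the summable decay $2^{-j[n/r+k-n/p]}$.
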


To prove Proposition \ref{p6.1}, we need to recall some results
concerning the homogenous Hardy-Sobolev space $\dot H^{k,p}(\rn)$
(see, for example, \cite{ck,hpw,tr,wz}).

\begin{definition}\label{d6.3}
Let $k\in\nn$ and $p\in(0,\,1]$. The {\it homogeneous Hardy-Sobolev
space} $\dot H^{k,p}(\rn)$ is defined to be the space
\begin{eqnarray*}
\dot
H^{k,p}(\rn):=\lf\{f\in\mathcal{S}'(\rn)/\mathscr{P}_{k-1}(\rn):\
\|f\|_{\dot H^{k,p}(\rn)}:=\dsum_{|\sz|=k}\|\pat^\sz f\|_{H^p(\rn)}
<\fz\r\},
\end{eqnarray*}
where $\mathcal{S}'(\rn)$ denotes the \emph{space of all Schwartz
distributions on $\rn$} and $\mathscr{P}_{k-1}(\rn)$ the
\emph{class of all polynomials of order strictly less than $k$ on
$\rn$}.
\end{definition}

Let $\ell\in\nn$ be fixed. Let $\mathcal{S}(\rn)$ denote the {\it
space of all Schwartz functions on $\rn$} and
$\phi\in\mathcal{S}(\rn)$ such that
\begin{enumerate}
\item[(i)] $\phi$ is radial, $\supp{\phi}\subset \{x\in\rn:\
|x|<1\}$ and for all $\xi\neq0$,
$\int_0^\fz|\widehat\phi(t\xi)|^2\,\frac{dt}{t}=1$, where
$\widehat\phi$ denotes the \emph{Fourier transform} of $\phi$,
\item[(ii)] for all $|\gz|\le\ell$, $\int_\rn x^\gz\phi(x)\,dx=0$.
\end{enumerate}
For any given $\phi\in\mathcal{S}(\rn)$ as above and all $f\in
\mathcal{S'}(\rn)$, let $Q_tf:=\phi_t*f$, where $\phi_t:=
t^{-n}\phi(x/t)$ for all $t\in(0,\,\fz)$ and $x\in\rn$. Let $p$,
$q\in(0,\,\fz)$ and $\az\in\rr$ such that $|\az|<\ell+1$. The {\it
homogenous Triebel-Lizorkin space} $\dot F^\az_{p,q}(\rn)$ is
defined to be the space
\begin{eqnarray*}
\dot F^\az_{p,q}(\rn)&&
:=\Bigg\{f\in\mathcal{S}'(\rn)/\mathscr{P}(\rn):
\ \|f\|_{\dot F^\az_{p,q}(\rn)}:=\lf.\lf\|\lf\{\dint_0^\fz
\lf(t^{-\az}|Q_tf|\r)^q
\,\frac{dt}{t}\r\}^{1/q}\r\|_{L^p(\rn)} <\fz\r\},
\end{eqnarray*}
where $\mathscr{P}(\rn)$ denotes the {\it class of all polynomials
on $\rn$} (see, for example, \cite{hpw,tr,wz}).

Let $\dot W^{k,2}(\rn)$ for $k\in\nn$ denote the {\it homogenous
Sobolev space of order $k$} endowed with the {\it norm}
$\|\cdot\|_{\dot W^{k,2}(\rn)} := \|\nabla^k(\cdot)\|_{L^2(\rn)}$.
It is known that the homogeneous Sobolev space $\dot W^{k,2}(\rn)$
and Hardy-Sobolev space $\dot H^{k,p}(\rn)$  coincide, respectively,
with the Triebel-Lizorkin space $\dot F^{k}_{2,2}(\rn)$ and $\dot
F^{k}_{p,2}(\rn)$ with equivalent norms (see, for example,
\cite[p.\,242]{tr}).

\begin{definition}\label{d6.4}
Let $k\in\nn$, $\ell\ge k$ be any fixed positive integer and
$p\in(0,\,1]$. A function $b$ is called an $\dot H^{k,p}(\rn)$-{\it
atom} if it satisfies that
\begin{enumerate}
\item[(i)] there exists a ball $B\subset\rn$ such that $\supp b\subset
B$,
\item[(ii)] for any $|\gz|\le\ell$,
$\int_\rn x^\gz b(x)\,dx=0$,
\item[(iii)]
\begin{eqnarray}\label{6.10}
\|b\|_{\dot F^{k}_{2,2}(\rn)}\le|B|^{1/2-1/p}.
\end{eqnarray}
\end{enumerate}
\end{definition}

\begin{lemma}\label{l6.3}
Let $p\in(0,\,1]$, $k\in\nn$ and $f\in \dot W^{k,2}(\rn)\cap \dot
H^{k,p}(\rn)$. Then there exist $\{\lz_j\}_{j=0}^\fz\in l^p$ and a
sequence $\{b_j\}_{j=0}^\fz$ of $\dot H^{k,p}(\rn)$-atoms such that
$f=\sum_{j=0}^\fz\lz_jb_j$ in $\dot W^{k,2}(\rn)\cap\dot
H^{k,p}(\rn)$, and $\|f\|_{\dot H^{k,p}(\rn)}\sim
\{\sum_{j=0}^\fz|\lz_j|^p\}^{1/p}.$
\end{lemma}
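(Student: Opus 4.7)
The plan is to exploit the coincidence $\dot H^{k,p}(\rn) = \dot F^k_{p,2}(\rn)$ and $\dot W^{k,2}(\rn) = \dot F^k_{2,2}(\rn)$ (noted just before Definition \ref{d6.4}) and to produce the decomposition via a single Frazier-Jawerth $\varphi$-transform, which by construction provides an atomic decomposition that is simultaneously adapted to every homogeneous Triebel-Lizorkin space to which $f$ belongs.

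First I would invoke the atomic characterization of $\dot H^{k,p}(\rn)$ from the references \cite{tr,hpw,ck} and \cite[Proposition 4.3]{wz}: taking $\phi\in\mathcal{S}(\rn)$ as in Definition \ref{d6.3} (radial, compactly supported in the unit ball, moment cancellation through some order $\ell\ge k$, satisfying a Calder\'on reproducing formula), the discretized reproducing formula decomposes any $g\in\mathcal{S}'(\rn)/\mathscr{P}_{k-1}(\rn)$ as $g=\sum_{j}c_j(g)\psi_{Q_j}$, where the $\psi_{Q_j}$ are smooth bumps supported in dyadic cubes $Q_j$ of bounded overlap at each scale, the coefficients $c_j(g)$ are essentially the pairings $\langle g,\phi_{Q_j}\rangle$, and one has a norm equivalence $\|g\|_{\dot F^\az_{p,q}(\rn)}\sim\|\{c_j(g)\}_{j}\|_{\dot f^\az_{p,q}}$ for every admissible triple $(\az,p,q)$. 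Applying this decomposition to $f$ and renormalizing each $\psi_{Q_j}$ so that $b_j:=\psi_{Q_j}/\|\psi_{Q_j}\|_{\dot F^{k}_{2,2}(\rn)}\cdot|Q_j|^{1/2-1/p}$ satisfies \eqref{6.10} (and the vanishing moments and support condition inherited from $\phi$), I obtain coefficients $\{\lz_j\}_{j=0}^\fz$ with $\|f\|_{\dot H^{k,p}(\rn)}\sim(\sum_{j}|\lz_j|^p)^{1/p}$ and $f=\sum_{j}\lz_j b_j$ converging in $\dot H^{k,p}(\rn)$.

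Second, because the very same building blocks also characterize $\dot F^k_{2,2}(\rn)=\dot W^{k,2}(\rn)$, the rescaled coefficients $\{\lz_j\}$ also satisfy, via the bounded overlap of $\{Q_j\}$ at each dyadic scale and the almost-orthogonality of the Littlewood-Paley pieces, an estimate of the form
\begin{eqnarray*}
\lf\|\dsum_{j\ge N}\lz_j b_j\r\|_{\dot W^{k,2}(\rn)}^2\ls\dsum_{j\ge N}|\lz_j|^2|Q_j|^{1-2/p}\sim\lf\|R_N f\r\|_{\dot W^{k,2}(\rn)}^2,
\end{eqnarray*}
where $R_N f$ denotes the tail of the $\varphi$-transform of $f$. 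Since $f\in\dot W^{k,2}(\rn)$, this tail tends to $0$, so the partial sums $\sum_{j=0}^N\lz_j b_j$ are Cauchy in $\dot W^{k,2}(\rn)$ and converge to $f$ there as well.

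The main obstacle, and the reason the hypothesis $f\in \dot W^{k,2}(\rn)\cap\dot H^{k,p}(\rn)$ is really used, is that a purely abstract $\dot H^{k,p}(\rn)$-atomic decomposition, obtained, say, from a Calder\'on-Zygmund covering argument, need not converge in $\dot W^{k,2}(\rn)$: the $l^p$-control of $\{\lz_j\}$ does not control the $L^2$-type Sobolev norm when atoms live on arbitrarily large balls. Insisting that the decomposition come from \emph{one and the same} Frazier-Jawerth $\varphi$-transform circumvents this issue, since the same coefficients $\langle f,\phi_{Q_j}\rangle$ simultaneously realize the norms of $f$ in both $\dot F^k_{p,2}(\rn)$ and $\dot F^k_{2,2}(\rn)$, forcing the partial sums to be Cauchy in both quasi-norms at once.
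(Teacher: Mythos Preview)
Your proposal is correct and follows essentially the same strategy as the paper: identify $\dot H^{k,p}$ and $\dot W^{k,2}$ with $\dot F^k_{p,2}$ and $\dot F^k_{2,2}$, then produce the atomic decomposition by a single mechanism that simultaneously realizes both Triebel--Lizorkin norms, so that membership in $\dot W^{k,2}$ forces convergence of the partial sums there as well. The paper routes this through the continuous tent-space picture --- mapping $f$ to $Q_t f\in T^p(\mathbb{R}^{n+1}_+)\cap T^2(\mathbb{R}^{n+1}_+)$, invoking the last clause of Theorem~\ref{t4.2} to get $T^2$-convergence of the tent-space atomic decomposition, and projecting back via the reproducing formula as in \cite[Proposition 4.3]{wz} --- whereas you go via the discrete Frazier--Jawerth $\varphi$-transform; the underlying idea is identical, and each version buys the same thing. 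One minor quibble: in the Frazier--Jawerth/Han--Paluszy\'nski--Weiss construction the atoms $b_j$ are not single renormalized wavelets $\psi_{Q_j}$ but sums of wavelets over a tent above the base cube, so your ``renormalizing each $\psi_{Q_j}$'' sentence glosses over a grouping step; this does not affect the argument but should be stated more carefully.
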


\begin{proof}
For any $f\in\dot W^{k,2}(\rn)\cap\dot H^{k,p}(\rn)$, by the
coincidence of Sobolev spaces and Hardy-Sobolev spaces with
Triebel-Lizorkin spaces, we know that $f\in \dot
F^k_{2,2}(\rn)\cap\dot F^k_{p,2}(\rn)$. From this and a slight
modification on the proof of \cite[Proposition 4.3]{wz}, together
with the same observation as in Theorem \ref{t4.2} on the
convergence of the atomic decomposition for elements in the tent
spaces, we deduce all the desired conclusions of Lemma \ref{l6.3},
which completes the proof of Lemma \ref{l6.3}.
\end{proof}

\begin{proof}[Proof of Proposition \ref{p6.1}]
For all $g\in L^2(\rn)$, define the operator $S_1$ by setting, for
all $x\in\rn$,
\begin{eqnarray*}
S_1g(x):=\lf\{\iint_{\Gamma(x)}\lf|t^k\sqrt{L_1}e^{-t^{2k}L_1}
g(y)\r|^2\frac{dy\,dt}{t^{n+1}}\r\}^{1/2}.
\end{eqnarray*}
For all $h\in\mathbb{H}_{\riesz,\,L_1}^p(\rn)$, let $f:=
L_1^{-1/2}h$. Then $f\in \dot W^{k,2} (\rn)\cap \dot H^{k,p}(\rn)$
and, by Lemma \ref{l6.3}, there exist $\{\lz_j\}_{j=0}^\fz\in l^p$
and a sequence $\{b_j\}_{j=0}^\fz$ of $\dot H^{k,p}(\rn)$-atoms such
that $f=\sum_{j=0}^\fz\lz_jb_j$ in $\dot W^{k,2}(\rn)\cap\dot
H^{k,p}(\rn)$ and, moreover, $(\sum_{j=0}^\fz|\lz_j|^p)^{1/p}\sim
\|f\|_{\dot H^{k,p}(\rn)}$. By Theorem \ref{t5.1} with $L$ replaced
by $L_1$, to show Proposition \ref{p6.1}, we only need to prove that
for all $f\in\dot W^{k,2}(\rn)\cap\dot H^{k,p}(\rn)$ with
$p\in(nr/(n+kr),\,1]$,
\begin{eqnarray}\label{6.11}
\lf\|S_1\sqrt{L_1}f\r\|_{L^p(\rn)}\ls\|f\|_{\dot H^{k,p}(\rn)}.
\end{eqnarray}
To prove \eqref{6.11}, it suffices to prove that for all $\dot
H^{k,p}(\rn)$-atoms $b$,
\begin{eqnarray}\label{6.12}
\lf\|S_1\sqrt{L_1}b\r\|_{L^p(\rn)}\ls1.
\end{eqnarray}
Indeed, if \eqref{6.12} holds, by the $L^2(\rn)$-boundedness of
$S_1$ which is deduced from \eqref{4.3}, and \cite[Theorem
1.1]{ahlmt}, we conclude that
\begin{eqnarray*}
\lf\|S_1\sqrt{L_1}f\r\|_{L^2(\rn)}\ls\lf\|\sqrt{L_1}f\r\|_{L^2(\rn)}
\sim\lf\|\nabla^kf\r\|_{L^2(\rn)}\sim\|f\|_{\dot W^{k,2}(\rn)},
\end{eqnarray*}
which, together with an argument similar to the proof of
\eqref{5.11}, yields that for almost every $x\in\rn$,
$|S_1\sqrt{L_1}f(x)|\le\sum_{j=0}^\fz|\lz_jS_1\sqrt{L_1}b_j(x)|$.
This, combined with \eqref{6.12}, shows that \eqref{6.11} is valid.

We now prove \eqref{6.12}. For $j\in\nn$, let $\mathcal{R}(S_j(Q))
:=\cup_{x\in S_j(Q)}\Gamma(x)$ be the {\it saw-tooth region} based
on $S_j(Q)\subset\rn$. From Minkowski's inequality, H\"older's
inequality and Fubini's theorem, we infer that
\begin{eqnarray*}
\lf\|S_1\sqrt{L_1}b\r\|_{L^p(\rn)}^p &&\ls\dsum_{j=0}^\fz
\lf\|S_1\sqrt{L_1}b\r\|_{L^p(S_j(Q))}^p\\
&&\ls \lf\|S_1\sqrt{L_1}b\r\|_{L^2(4Q)}^p
\lf|Q\r|^{n(\frac{1}{p}-\frac{1}{2})p}
+\dsum_{j=3}^\fz\lf\|S_1\sqrt{L_1}b\r\|_{L^2(S_j(Q))}^p
\lf|2^jl(Q)\r|^{n(\frac{1}{p}-\frac{1}{2})p}\\
&&\ls\lf\|S_1\sqrt{L_1}b\r\|_{L^2(4Q)}^p
\lf|Q\r|^{(\frac{1}{p}-\frac{1}{2})p}\\
&&\hs+\dsum_{j=3}^\fz\lf\{\iint_{\mathcal{R}
(S_j(Q))}\lf|t^{2k}L_1e^{-t^{2k}L_1}
b(y)\r|^2\frac{dy\,dt}{t^{2k+1}}\r\}^{p/2}
\lf|2^jl(Q)\r|^{n(\frac{1}{p}-\frac{1}{2})p}\\
&&\ls\lf\|S_1\sqrt{L_1}b\r\|_{L^2(4Q)}^p
\lf|Q\r|^{(\frac{1}{p}-\frac{1}{2})p}\\
&&\hs +\dsum_{j=3}^\fz\lf\{
\dint_{2^{j-2}Q}\dint_{2^{j-3}l(Q)}^\fz\lf|t^{2k}L_1
e^{-t^{2k}L_1}b(y)\r|^2\frac{dy\,dt}{t^{2k+1}}\r\}^{p/2}
\lf|2^jl(Q)\r|^{n(\frac{1}{p}-\frac{1}{2})p}\\
&&\hs+\dsum_{j=3}^\fz\lf\{ \dint_{\rn\setminus
2^{j-2}Q}\dint_0^\fz\cdots\r\}^{p/2}
\lf|2^jl(Q)\r|^{n(\frac{1}{p}-\frac{1}{2})p}\\
&&=:\mathrm{I}+\dsum_{ j=3}^\fz(\mathrm{J}_{j})^p
+\dsum_{j=3}^\fz(\mathrm{V}_{j})^p.
\end{eqnarray*}

For $\mathrm{I}$, by the $L^2(\rn)$-boundedness of $S_1$,
\eqref{6.10} and \cite[Theorem 1.1]{ahmt}, we see that
\begin{eqnarray}\label{6.13}
\mathrm{I}\ls\lf\|\sqrt {L_1}b\r\|_{L^2(\rn)}^p
|Q|^{(\frac{1}{p}-\frac{1}{2})p}\ls \|b\|_{\dot
F^{k}_{2,2}(\rn)}^p|Q|^{(\frac{1}{p}-\frac{1}{2})p}\ls1.
\end{eqnarray}

To estimate $\mathrm{J}_j$,
 recall the following {\it embedding theorem}
(see, for example, \cite{tr}) that for all $f\in \dot
F^k_{\frac{nr}{n+kr},2}(\rn)$,
\begin{eqnarray}\label{6.14}
\|f\|_{L^{r}(\rn)}\ls\lf\|\nabla^kf\r\|_{L^{\frac{nr}{n+kr}}(\rn)}.
\end{eqnarray}

 For each $\mathrm{O}_j$, from  Minkowski's inequality, Lemma \ref{l6.2},
 \eqref{6.14}, Lemma \ref{l6.2}, H\"older's inequality and \eqref{6.10},
 it follows that
\begin{eqnarray*}
\mathrm{J}_j&&\ls\lf\{\dint_{2^{j-3}l(Q)}^\fz\lf\|t^{2k}
L_1e^{-t^{2k}L_1}b
\r\|_{L^2(2^{j-2}Q)}^2\frac{dt}{t^{1+2k}}\r\}^{1/2}
\lf|2^jl(Q)\r|^{n(\frac{1}
{p}-\frac{1}{2})}\\
&&\ls\lf\{\dint_{2^{j-3}l(Q)}^\fz
t^{2n(\frac{1}{2}-\frac{1}{r})}\frac{dt}{t^{1+2k}}\r\}^{1/2}
\lf|2^jl(Q)\r|^{n(\frac{1}{p}-\frac{1}{2})}\|b\|_{L^r(Q)}\\
&&\ls\lf[2^jl(Q)\r]^{n(\frac{1}{p}-\frac{1}{r})-k}\|b\|_{L^r(Q)}\ls
\lf[2^jl(Q)\r]^{n(\frac{1}{p}-\frac{1}{r})-k}
\lf\|\nabla^kb\r\|_{L^{\frac{rn}{n+kr}}(Q)}\\
&&\ls\lf[2^jl(Q)\r]^{n(\frac{1}{p}-\frac{1}{r})-k}
\lf\|\nabla^kb\r\|_{L^{2}(Q)}|l(Q)|^{\frac{n+kr}{r}-\frac{n}{2}}
\ls2^{[n(\frac{1}{p}-\frac{1}{r})-k]j}.
\end{eqnarray*}
Let $\az:= \frac{n}{r}+k-\frac{n}{p}$. Since
$p\in(\frac{nr}{n+kr},\,1]$, we then have $\az>0$ and
\begin{eqnarray}\label{6.15}
\sum_{j=3}^\fz(\mathrm{J}_j)^p\ls\sum_{j=3}^\fz2^{-\az jp} \ls 1.
\end{eqnarray}

To estimate $\mathrm{V}_j$, we write
\begin{eqnarray*}
\mathrm{V}_j
&&\ls\lf[2^jl(Q)\r]^{n(\frac{1}{p}-\frac{1}{2})}\lf\{\dint_{\rn\setminus
2^{j-2}Q}\dint_{0}^{2^{j-3}l(Q)}\lf|t^{2k}L_1
e^{-t^{2k}L_1}b(y)\r|^2\frac{dy\,dt}{t^{2k+1}}\r\}^{1/2}\\
&&\hs+
\lf[2^jl(Q)\r]^{n(\frac{1}{p}-\frac{1}{2})}\lf\{\dint_{\rn\setminus
2^{j-2}Q}\dint_{2^{j-3}l(Q)}^{\fz}\cdots\r\}^{1/2}\\
&&=:\mathrm{V}_{j,1}+\mathrm{V}_{j,2}.
\end{eqnarray*}
Similar to the estimate of $\mathrm{J}_j$, we see that
\begin{eqnarray}\label{6.16}
\mathrm{V}_{j,2}\ls2^{-\az j}.
\end{eqnarray}
To estimate $\mathrm{V}_{j,1}$, let $\bz\in(2k+2n(1/r-1/2),\,\fz)$.
By Lemma \ref{l6.2}, \eqref{6.10} and \eqref{6.14}, there exists a
positive constant $\wz C$ such that
\begin{eqnarray*}
\mathrm{V}_{j,1}&&\ls\lf[2^jl(Q)\r]^{n(\frac{1}
{p}-\frac{1}{2})}\lf[\dint_0^{2^{j-3}l(Q)} t^{2n(\frac{1}{2}-\frac{1}{r})}
\exp\lf\{-\wz C\frac{\lf[2^jl(Q)\r]^{2k/(2k-1)}} {t^{2k/(2k-1)}}\r\}
\frac{dt}{t^{2k+1}}\r]^{1/2}\|b\|_{L^r(Q)},\\
&&\ls\lf[2^jl(Q)\r]^{n(\frac{1}{p}-\frac{1}{2})}\lf[\dint_0^{2^{j-3}l(Q)}
t^{2n(\frac{1}{2}-\frac{1}{r})}\lf[\frac{t}{2^jl(Q)}\r]^\bz\frac{dt}
{t^{2k+1}}\r]^{1/2}\lf\|\nabla^kb\r\|_{L^{\frac{rn}{n+kr}}(Q)}\\
&&\ls\lf[2^jl(Q)\r]^{n(\frac{1}{p}-\frac{1}{2})}\lf[\frac{1}
{\lf[2^jl(Q)\r]^\bz}\dint_0^{2^{j-3}l(Q)} t^{2n(\frac{1}{2}
-\frac{1}{r})+\bz-2k-1}\,dt\r]^{1/2}\lf\|\nabla^kb\r\|_{L^2(Q)}
|l(Q)|^{\frac{n+kr}{r}-\frac{n}{2}}\\
&&\ls2^{j[n(\frac{1}{p}-\frac{1}{r})-k]},
\end{eqnarray*}
which, together with \eqref{6.16}, shows that
$$\sum_{j=3}^\fz(\mathrm{V}_j)^p=\sum_{j=3}^\fz 2^{-\az jp}\ls1.$$
This, combined \eqref{6.13} and \eqref{6.15}, implies \eqref{6.12},
which completes the proof of Proposition \ref{p6.1}.
\end{proof}

Combining Theorem \ref{t6.1} and Proposition \ref{p6.1}, we obtain
the following Riesz transform characterization of $H_{L_1}^p(\rn)$.
We point out that Theorem \ref{t6.2} when $k=1$ is just the Riesz
transform characterization of $H_{-\rm{div}(A\nabla)}^p(\rn)$ for
$p\in(0,\,1]$, which is exactly \cite[Theorem 5.2]{hmm} in the case
that $p\in(0,\,1]$.

\begin{theorem}\label{t6.2}
Let $k\in\nn$, $L_1$ be the $2k$ order divergence form homogeneous
elliptic operator, $r\in(1,\,2]$ such that $rn/(n+kr)\le1$, and
the semigroup $\{e^{-tL_1}\}_{t>0}$ satisfy the $L^r-L^2$
$k$-off-diagonal estimates. Then for all $p\in(rn/(n+kr),\,1]$,
$H_{L_1}^p(\rn)=H_{\riesz,\,L_1}^p(\rn)$ with equivalent norms.
\end{theorem}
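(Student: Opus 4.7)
The statement follows by combining the two main results of Section \ref{s6} that are already in place, namely Theorem \ref{t6.1} and Proposition \ref{p6.1}, and then passing to completions. So this is not really a new proof but an assembly step; I will spell out the plan in case there are subtleties in matching the underlying dense subspaces.

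The plan is first to verify, at the level of the $L^2$-predual subspaces, that
\begin{equation*}
\mathbb{H}_{L_1}^p(\rn)=\mathbb{H}_{\riesz,\,L_1}^p(\rn)
\end{equation*}
with equivalent quasi-norms, and then to use the fact that $H_{L_1}^p(\rn)$ and $H_{\riesz,\,L_1}^p(\rn)$ are by Definitions \ref{d4.1} and \ref{d6.1} defined as completions of these $L^2$-subspaces. For the inclusion $\mathbb{H}_{L_1}^p(\rn)\subset\mathbb{H}_{\riesz,\,L_1}^p(\rn)$, I would take $f\in\mathbb{H}_{L_1}^p(\rn)$, so $f\in L^2(\rn)$ and $\|f\|_{H_{L_1}^p(\rn)}<\fz$. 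Applying Theorem \ref{t6.1}, with the hypothesis $p\in (rn/(n+kr),\,1]\subset(n/(n+k),\,1]$ automatic because $r\in(1,\,2]$, one gets $\nabla^k(L_1^{-1/2})f\in H^p(\rn)$ and
\begin{equation*}
\|f\|_{H_{\riesz,\,L_1}^p(\rn)}=\lf\|\nabla^k(L_1^{-1/2})f\r\|_{H^p(\rn)}\ls\|f\|_{H_{L_1}^p(\rn)},
\end{equation*}
so $f\in \mathbb{H}_{\riesz,\,L_1}^p(\rn)$ with norm control.

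For the reverse inclusion $\mathbb{H}_{\riesz,\,L_1}^p(\rn)\subset\mathbb{H}_{L_1}^p(\rn)$, I would take $h\in\mathbb{H}_{\riesz,\,L_1}^p(\rn)$, so $h\in L^2(\rn)$ with $\nabla^k(L_1^{-1/2})h\in H^p(\rn)$. Now I invoke Proposition \ref{p6.1}, whose hypothesis on $p$ and on the $L^r$-$L^2$ $k$-off-diagonal estimate of $\{e^{-tL_1}\}_{t>0}$ is exactly the hypothesis of Theorem \ref{t6.2}, to conclude
\begin{equation*}
\|h\|_{H_{L_1}^p(\rn)}\ls\lf\|\nabla^k(L_1^{-1/2})h\r\|_{H^p(\rn)}=\|h\|_{H_{\riesz,\,L_1}^p(\rn)}.
\end{equation*}
In particular $S_{L_1}h\in L^p(\rn)$, hence $h\in\mathbb{H}_{L_1}^p(\rn)$, and the above inequality gives the other norm comparison.

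Finally, having obtained $\mathbb{H}_{L_1}^p(\rn)=\mathbb{H}_{\riesz,\,L_1}^p(\rn)$ with $\|\cdot\|_{H_{L_1}^p(\rn)}\sim\|\cdot\|_{H_{\riesz,\,L_1}^p(\rn)}$, the completion procedure in Definitions \ref{d4.1} and \ref{d6.1} is carried out with respect to two equivalent quasi-norms on the same space, which produces the same completion. Hence $H_{L_1}^p(\rn)=H_{\riesz,\,L_1}^p(\rn)$ with equivalent quasi-norms, and Theorem \ref{t6.2} is proved. The only genuinely nontrivial content is hidden in the earlier Theorem \ref{t6.1} (boundedness of the Riesz transform via the molecular characterization) and Proposition \ref{p6.1} (the converse, which uses the generalized square function characterization of $H_{L_1}^p(\rn)$ from Theorem \ref{t5.1}, the Hardy-Sobolev atomic decomposition from Lemma \ref{l6.3}, and the Auscher-Hofmann-McIntosh-Tchamitchian Kato-type estimate $\|\sqrt{L_1}f\|_{L^2(\rn)}\ls\|\nabla^kf\|_{L^2(\rn)}$); both are already established above, so the step I have just outlined is essentially bookkeeping and there is no real obstacle here.
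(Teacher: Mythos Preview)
Your proposal is correct and matches the paper's approach exactly: the paper states Theorem \ref{t6.2} as an immediate consequence of combining Theorem \ref{t6.1} and Proposition \ref{p6.1}, without writing out a separate proof. Your additional remarks on matching the $L^2$-predual subspaces, the inclusion $(rn/(n+kr),\,1]\subset(n/(n+k),\,1]$ needed to invoke Theorem \ref{t6.1}, and passing to completions simply make explicit the bookkeeping the paper leaves to the reader.
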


\begin{remark}\label{r6.1} We point out that a key fact used in the proof
of Proposition \ref{p6.1} (and hence Theorem \ref{t6.2}) is
$\|\sqrt{L_1}f\|_{L^2(\rn)}\ls\|\nabla^kf\|_{L^2(\rn)}$, which comes
from \cite[Theorem 1.1]{ahmt}. This inequality for $L_2$ is
equivalent to the following inequality that for all $f\in\dot
W^{k,2}(\rn)$,
\begin{eqnarray*}
\lf\|V^{k/2}f\r\|_{L^2(\rn)}\ls\lf\|\nabla^k f\r\|_{L^2(\rn)},
\end{eqnarray*}
which seems impossible even when $V:= 1$. Thus, the method used in
the proof of Proposition \ref{p6.1} seems unsuitable for obtaining a
counterpart of Proposition \ref{p6.1} for $L_2$.
\end{remark}

\Acknowledgements{This work was supported by the National Natural
Science Foundation (Grant No. 11171027) of China and Program for
Changjiang Scholars and Innovative Research Team in University of
China. The authors would like to thank the referees for their
careful reading and many valuable remarks which made this article
more readable.}


\end{document}